\numberwithin{equation}{section}
\DeclareFontFamily{OML}{rsfs}{\skewchar\font'177}
\DeclareFontShape{OML}{rsfs}{m}{n}{ <5> <6> rsfs5 <7> <8> <9>
	rsfs7 <10> <10.95> <12> <14.4> <17.28> <20.74> <24.88> rsfs10 }{}
\DeclareMathAlphabet{\mathfs}{OML}{rsfs}{m}{n}
\newtheorem{theorem}{Theorem}
\newtheorem{lemma}{Lemma}[section]
\newtheorem{proposition}[lemma]{Proposition}
\newtheorem{conjecture}{Conjecture}
\theoremstyle{definition}
\newtheorem{definition}{Definition}
\newtheorem{remark}[lemma]{Remark}
\theoremstyle{example}
\theoremstyle{openproblem}
\numberwithin{equation}{section}
\newcommand{\RNum}[1]{\uppercase\expandafter{\romannumeral #1\relax}}
\newcommand{\specificthanks}[1]{\@fnsymbol{#1}}
\DeclareFontFamily{U}{mathx}{}
\DeclareFontShape{U}{mathx}{m}{n}{<-> mathx10}{}
\DeclareSymbolFont{mathx}{U}{mathx}{m}{n}
\DeclareMathAccent{\widehat}{0}{mathx}{"70}
\DeclareMathAccent{\widecheck}{0}{mathx}{"71}
\begin{document}

	\title{Vertex-removal stability and the least positive value of harmonic measures}
	
	\author{Zhenhao Cai$^1$}
	\address[Zhenhao Cai]{School of Mathematical Sciences, Peking University, Beijing, China}
	\email{caizhenhao@pku.edu.cn}
	\thanks{$^1$School of Mathematical Sciences, Peking University, Beijing, China}

	\author{Gady Kozma$^2$}
	\address[Gady Kozma]{Faculty of Mathematics and Computer Science, Weizmann Institute of Science, Rehovot, Israel}
	\email{gady.kozma@weizmann.ac.il}
	\thanks{$^2$Faculty of Mathematics and Computer Science, Weizmann Institute of Science, Rehovot, Israel}

	\author{Eviatar B. Procaccia$^3$}
	\address[Eviatar B. Procaccia]{Faculty of Data and Decision Sciences, Technion - Israel Institute of Technology, Haifa, Israel}
	\email{eviatarp@technion.ac.il}
	\thanks{$^3$Faculty of Data and Decision Sciences, Technion - Israel Institute of Technology, Haifa, Israel}

	\author{Yuan Zhang$^4$}
	\address[Yuan Zhang]{Center for Applied Statistics and School of Statistics, Renmin University of China, Beijing, China}
	\email{zhang\_probab@ruc.edu.cn}
	\thanks{$^4$Center for Applied Statistics and School of Statistics, Renmin University of China, Beijing, China}

	\maketitle
	
	\tableofcontents
	
	\begin{abstract}
		
		We prove that for $\mathbb{Z}^d$ ($d\ge 2$), the vertex-removal stability of harmonic measures (i.e. it is feasible to remove some vertex while changing the harmonic measure by a bounded factor) holds if and only if $d=2$. The proof mainly relies on geometric arguments, with a surprising use of the discrete Klein bottle. Moreover, a direct application of this stability verifies a conjecture of Calvert, Ganguly and Hammond \cite{calvert2021collapse} for the exponential decay of the least positive value of harmonic measures on $\mathbb{Z}^2$. Furthermore, the analogue of this conjecture for $\mathbb{Z}^d$ with $d\ge 3$ is also proved in this paper, despite vertex-removal stability no longer holding. 
		
	\end{abstract}

	\section{Introduction}	
	In this paper, we study the (discrete) harmonic measure (from infinity) on graphs, with a particular focus on the $d$-dimensional integer lattices $\mathbb{Z}^d$ (we assume $d\ge 2$ throughout this paper). Roughly speaking, the harmonic measure of a finite set is the hitting distribution of this set by a random walk on the graph starting from infinity. To facilitate understanding, we first review the definition of harmonic measure on $\mathbb{Z}^d$ (a more general definition will be provided later in Section \ref{sebsection_Q2}). A random walk on $\mathbb{Z}^d$ starting from $x\in \mathbb{Z}^d $ is a discrete-time Markov process $\{S_n\}_{n\ge 0}$ (we denote its law by $\mathbb{P}_x$) such that $\mathbb{P}_x\left( S_0=x \right)=1$ and 
	\begin{equation}
		\mathbb{P}_x\left( S_{n+1}=z \mid S_n=y \right)=  (2d)^{-1} \cdot \mathbbm{1}_{y\sim z},\ \forall y,z\in \mathbb{Z}^d\ \text{and}\ n\ge 0,
	\end{equation}
	where ``$y\sim z$'' means that $y$ and $z$ are adjacent (i.e. $|y-z|=1$, where $|\cdot|$ is the Euclidean norm). For any non-empty, finite $A\subset \mathbb{Z}^d$ and $y\in A$, the harmonic measure of $A$ at $y$ is defined as 
	\begin{equation}\label{1.1}
		\mathbb{H}_A(y):=\lim\limits_{|x|\to \infty} \mathbb{P}_x(S_{\tau_A}=y\mid \tau_A<\infty),
	\end{equation}
	where $\tau_A:=\inf\{n\ge 0: S_n\in A\}$ (with $\inf \emptyset=\infty$ for completeness) is the first time when $\{S_n\}_{n\ge 0}$ hits $A$. Refer to \cite[Theorem 2.1.3]{lawler2013intersections} for the existence of the limit in (\ref{1.1}). It follows from the definition that $\mathbb{H}_A(\cdot)$ is a probability measure on $A$ (i.e. $\sum_{y\in A}\mathbb{H}_A(y)=1$). Notably, the continuous analogue of $\mathbb{H}_A(\cdot)$ can be obtained by replacing $\mathbb{Z}^d$ and random walk with $\mathrm{R}^d$ and Brownian motion respectively, and is highly related to the Dirichlet problem in the partial differential equation (PDE) field (see \cite{kesten1991relations} and \cite[Sections 3 and 8]{morters2010brownian}). It is also worth mentioning that there are numerous widely-studied models in statistical physics based on harmonic measures, such as Diffusion Limited Aggregation (DLA) \cite{kesten1987long,kesten1990upper,witten1981diffusion,witten1983diffusion}, Stationary DLA \cite{ mu2022scaling,procaccia2020stationary}, Dielectric Breakdown Model (DBM) \cite{losev2023long,niemeyer1984fractal} and Harmonic Activation and Transport (HAT) \cite{calvert2021existence,calvert2021collapse} for the discrete harmonic measure, as well as 	Hastings-Levitov \cite{berger2022growth, norris2012hastings, silvestri2017fluctuation}, Anisotropic Hastings–Levitov \cite{johansson2012scaling} and Aggregate Loewner Evolution (ALE) \cite{sola2019one} for the continuous one. Readers may refer to \cite{lawler1994random,olegovic2021diffusion} for excellent accounts.

	This study mainly focuses on the following two fundamental questions.
	
	\begin{itemize}
		\item[\textbf{Q1}:]  \textit{How does removing a vertex from a set affect the harmonic measure of another vertex?}

		\item[\textbf{Q2}:]   \textit{What is the least positive value of the harmonic measure that a vertex can have in a set of fixed cardinality? }
	\end{itemize}
	
	In fact, as we will show later, these two questions are highly related to one another, and correspond to the two main components of this paper respectively.

	\subsection{Vertex-removal stability of the harmonic measure}\label{sebsection_Q1} Here we formulate \textbf{Q1}. In fact, for any $A\subset \mathbb{Z}^d$, $A'\subset A$ and $y\in A'$, we have $\mathbb{H}_{A}(y)\le \mathbb{H}_{A'}(y)$ since $\{S_{\tau_A}=y \}\subset \{S_{\tau_{A'}}=y \}$ and $\{\tau_{A}<\infty\} \supset \{\tau_{A'}<\infty\}$. Especially, removing a vertex $z\in A\setminus \{y\}$ from $A$ leads to an increase of the harmonic measure $\mathbb{H}_A(y)$. In light of this, for any $A\subset \mathbb{Z}^d$, $y\in A$ with $\mathbb{H}_A(y)>0$, and any $z\in A\setminus \{y\}$, we define the price of removing $z$ with respect to $\mathbb{H}_A(y)$ as 
	\begin{equation}\label{def_rho}
		\rho_{A,y}(z):=\frac{\mathbb{H}_{A\setminus \{z\}}(y)}{\mathbb{H}_A(y)}\in \left[1,\infty \right),
	\end{equation}
	where we require that $\mathbb{H}_A(y)>0$ to avoid the case of $0$ being the divisor. However, $\rho_{A,y}(z)$ can be arbitrarily large. To see this, readers may refer to the following ``tube'' example in Figure \ref{fig:tube}, where removing the point $z$ enlarges the harmonic measure at $y$ from exponentially small (with respect to $|A|$) to polynomially small (since after removing $z$, the random walk may directly reach $y$ from the right side; in addition, by adapting the proof of \cite[Equation (2.41)]{lawler2013intersections}, it can be shown that $\mathbb{H}_{A}(z)$ decays polynomially with respect to $|A|$).

	  
	  \begin{figure}[h!]
	  	\begin{tikzpicture}[scale=0.7]
	  		\draw[step=1cm,blue,thin, dotted] (-6,-1) grid (6,3);
	  		\node at (-1,0) [circle,fill=black,inner sep=2]{}; \node at (-2,0) [circle,fill=black,inner sep=2pt]{};\node at (-3,0) [circle,fill=black,inner sep=2pt]{};\node at (-4,0) [circle,fill=black,inner sep=2pt]{}; \node at (-5,0) [circle,fill=black,inner sep=2pt]{};
	  		\node at (-1,2) [circle,fill=black,inner sep=2pt]{}; \node at (-2,2) [circle,fill=black,inner sep=2pt]{};\node at (-3,2) [circle,fill=black,inner sep=2pt]{};\node at (-4,2) [circle,fill=black,inner sep=2pt]{}; \node at (-5,2) [circle,fill=black,inner sep=2pt]{};
	  		\node at (0,0) [circle,fill=black,inner sep=2pt]{}; \node at (4,1) [circle,fill=red,inner sep=2pt]{};  \node at (3.5,1) []{$y$};  \node at (5.5,1) []{$z$}; 
	  		\node at (1,0) [circle,fill=black,inner sep=2pt]{};	\node at (2, 0)[circle,fill=black,inner sep=2pt]{} ;\node at (3,0) [circle,fill=black,inner sep=2pt]{};\node at (4,0) [circle,fill=black,inner sep=2pt]{};\node at (5,0) [circle,fill=black,inner sep=2pt]{};\node at (5,1) [circle,fill=black,inner sep=2pt]{};\node at (5,2) [circle,fill=black,inner sep=2pt]{};\node at (4,2) [circle,fill=black,inner sep=2pt]{};\node at (3,2) [circle,fill=black,inner sep=2pt]{};\node at (2,2) [circle,fill=black,inner sep=2pt]{};\node at (1,2) [circle,fill=black,inner sep=2pt]{};\node at (0,2) [circle,fill=black,inner sep=2pt]{};\end{tikzpicture}
	  	\caption{ The ``tube'' example for unbounded $\rho_{A,y}(z)$ \label{fig:tube}}
	  \end{figure}
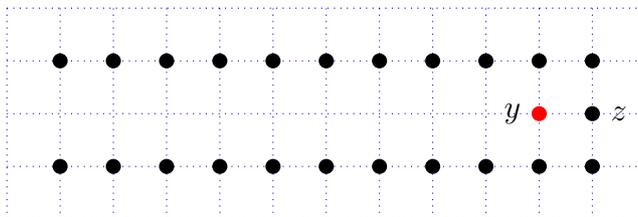
	  

	When discussing the vertex-removal stability, we aim to study whether for all $A\subset \mathbb{Z}^d$ and $y\in A$ with $\mathbb{H}_A(y)>0$ and $|A|\ge 2$ (where $|A|$ is the cardinality of the set $A$), there exists a vertex $z\in A\setminus \{y\}$ such that the price of removing $z$ with respect to $\mathbb{H}_A(y)$ is uniformly upper-bounded, which motivates us to present the following definition:
		\begin{definition}\label{def1}
		We define the vertex-removal stability constant for harmonic measure on $\mathbb{Z}^d$ as
		\begin{equation}
			\psi(\mathbb{Z}^d):= \sup_{A\subset \mathbb{Z}^d,y\in A:\mathbb{H}_A(y)>0,|A|\ge 2} \min_{z\in A\setminus \{y\}}  \rho_{A,y}(z).
		\end{equation}
	\end{definition}
	Our main result is the following:
\begin{theorem}\label{thm1}
	$\psi(\mathbb{Z}^d)<\infty$ if and only if $d=2$. 
\end{theorem}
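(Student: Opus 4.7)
The plan is to handle the two directions of the equivalence separately, but both through a common identity. Decomposing a walk from infinity targeting $A\setminus\{z\}$ by the first time it hits $A$, and applying strong Markov there, one gets
\[ \mathbb{H}_{A\setminus\{z\}}(y) \;=\; \mathbb{H}_A(y) + \mathbb{H}_A(z)\,\mathbb{P}_z\bigl(S_{\tau_{A\setminus\{z\}}}=y\bigr), \]
so that $\rho_{A,y}(z)-1 = \mathbb{H}_A(z)\,\mathbb{P}_z(S_{\tau_{A\setminus\{z\}}}=y)/\mathbb{H}_A(y)$. (For $d\ge 3$ a harmless modification involving the capacities of $A$ and $A\setminus\{z\}$ is needed, but the structure is the same.) This identity drives both halves of the argument.

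For $d\ge 3$, I would construct an explicit family $(A_N,y_N)$ with $\min_{z}\rho_{A_N,y_N}(z)\to\infty$. The heuristic is that transience of the walk lets one build a ``cage'' around $y_N$ so that $\mathbb{H}_{A_N}(y_N)$ is made extremely small by a narrow bottleneck, while the removal of any single vertex of the cage opens a macroscopic new route to $y_N$. Making the denominator $\mathbb{H}_A(y)$ shrink in $N$ by a genuinely faster rate than $\mathbb{H}_{A\setminus\{z\}}(y)$ for every $z$ is the heart of the lower bound; this would be carried out using Green's function and escape probability estimates on $\mathbb{Z}^d$, $d\ge 3$.

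For $d=2$, the identity reduces the task to selecting, for every admissible $(A,y)$, a vertex $z\in A\setminus\{y\}$ with $\mathbb{P}_z\bigl(S_{\tau_{A\setminus\{z\}}}=y\bigr) \le C\,\mathbb{H}_A(y)$ (using $\mathbb{H}_A(z)\le 1$). Heuristically $z$ should be placed ``antipodally to $y$ around $A$'': a walk from such a $z$ that hits $y$ before any other vertex of $A$ is forced to loop around $A$ and hence, by recurrence of the planar walk, looks essentially like a walk coming from infinity. To make this rigorous when $A$ may be arbitrary (disconnected, irregular, etc.), the discrete Klein bottle enters as the key tool, presumably by identifying $\mathbb{Z}^2$ along a suitable axis through $A$ in an orientation-reversing manner: the resulting quotient graph is locally a discrete Klein bottle near the axis, and on this quotient $y$ and a canonically chosen $z$ are related by a global symmetry, so that reversibility of the walk on the quotient converts $\mathbb{P}_z(\tau_y < \tau_{A\setminus\{y,z\}})$ into a quantity comparable with $\mathbb{H}_A(y)$.

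The main obstacle is the $d=2$ direction. One must choose $z$ canonically for arbitrary finite $A$, and the Klein-bottle identification will inevitably produce boundary effects along the identification axis (where two copies are glued) that must be controlled uniformly in $A$. By contrast, the $d\ge 3$ direction should be easier: it reduces to designing a clever explicit family plus standard random walk estimates, with no delicate geometric flexibility required.
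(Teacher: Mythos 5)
Your proposal has the role of the discrete Klein bottle exactly backwards. In the paper, the Klein bottle is the \emph{construction} for the $d\ge 3$ direction: the set $K_n$ (a shell of a box with an outer tube glued from the top face to a side face and an inner tube threading back through the bottom) is designed precisely so that a walk escaping from $\bm{0}$ must traverse the entire long corridor, yet removing \emph{any} single vertex of $K_n$ opens a macroscopically shorter escape route. That property is what forces $\min_z \rho_{K_n,\bm{0}}(z) \to \infty$. Your ``cage with a bottleneck'' intuition is in the right spirit, but a generic cage does not have this feature --- typically some boundary vertex can be removed cheaply --- and the non-orientable gluing is exactly the geometric trick that rules this out. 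You have not identified this, and instead assigned the Klein bottle to the $d=2$ half of the theorem, where it plays no role at all.

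The $d=2$ direction in the paper is a planarity argument, not a symmetry argument. Your starting identity
\[
\mathbb{H}_{A\setminus\{z\}}(y) = \mathbb{H}_A(y) + \mathbb{H}_A(z)\,\mathbb{P}_z\bigl(S_{\tau_{A\setminus\{z\}}}=y\bigr)
\]
is correct for $d=2$ and is in the same spirit as the first step (\ref{3.4}) of the paper's decomposition, but the rest of your plan would not go through. For an arbitrary finite $A$ there is no ``antipodal'' choice of $z$ and no reflection symmetry to transfer to a quotient: the proposed Klein-bottle quotient of $\mathbb{Z}^2$ along an axis through $A$ produces exactly the uncontrolled boundary effects you flag, and there is no mechanism to make them uniform in $A$. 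What the paper does instead is purely combinatorial: it defines a \emph{marginal vertex} $z$ as one whose exterior neighborhood $N^A_\infty(z)$ is connected inside $N^A_{\infty,*}(z)$, proves via planarity (Lemmas \ref{lemma_cut} and \ref{lemma_cut_marginal}) that every finite $*$-connected subset of $\mathbb{Z}^2$ contains one, and then bounds the price of removing a suitably chosen marginal vertex through a Green's-function comparison (Lemma \ref{lemma_upper_bound} together with Lemmas \ref{lemma_compare_green} and \ref{lemma_hit_distant}). The selection of $z$ is local and delicate --- it must avoid, for instance, the cap of a tube, which is a marginal vertex but has unbounded removal price --- so no global geometric move of the kind you envision can replace it. As written, the $d=2$ half of your proposal contains a genuine gap.
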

	
	Among the proof of Theorem \ref{thm1}, in addition to the probabilistic properties of random walks, geometric arguments play a pivotal role. Specifically, to prove $\psi(\mathbb{Z}^2)<\infty$, we introduce a special type of vertices named ``marginal vertices'' (which are candidates for removal with a bounded price) and then establish its existence in the case of $\mathbb{Z}^2$. Moreover, the proof of $\psi(\mathbb{Z}^d)=\infty$ for $d\ge 3$ surprisingly involves the discrete version of a classic geometric object, the Klein bottle. Furthermore, we observe that both the existence of marginal vertices and the construction of discrete Klein bottles can be naturally extended to a large class of graphs, which inspires us to consider a generalization of Theorem \ref{thm1} in Remarks \ref{generalize_2d} and \ref{generalize_3d}.

%


	\subsection{Least positive value of harmonic measures} \label{sebsection_Q2}

	Before turning to \textbf{Q2}, let us review a more general definition of harmonic measure called ``wired harmonic measure (from infinity)'', which is introduced in \cite[Exercise 2.50 (c)]{lyons2017probability}. For any simple (i.e. there is no more than one edge between each two vertices, and no edge that connects a vertex to itself), locally finite (i.e. every vertex has a finite degree) and connected graph $\mathfs{G}=(\mathfs{V},\mathfs{E})$, the random walk on $\mathfs{G}$ starting from $x\in \mathfs{V}$ is a discrete-time Markov process $\{S_n\}_{n\ge 0}$ (we also denote its law by $\mathbb{P}_x$) such that $\mathbb{P}_x(S_0=x)=1$ and 
	\begin{equation}
		\mathbb{P}_x(S_{n+1}=z \mid S_n=y)=  [\mathrm{deg}(y)]^{-1}\cdot\mathbbm{1}_{y\sim z},\ \forall y,z\in \mathfs{V}\ \text{and}\ n\ge 0,
	\end{equation} 
	where ``$y\sim z$'' is equivalent to ``$\{y,z\}\in \mathfs{E}$'', and $\mathrm{deg}(y):= |\{v\in \mathfs{V}: v\sim y \}|$. Assume that in $\mathfs{V}$ there is a prefixed vertex, which we denote by $v_*$. For each $v\in \mathfs{V}$, let $\|v\|$ be the graph distance between $v_*$ and $v$ (i.e. the minimum length of a path on $\mathfs{G}$ from $v_*$ to $v$). For any $r\ge 0$, we denote $\mathbf{B}(r):= \{v\in \mathfs{V}: \|v\|\le r \}$. Let $\mathfs{G}_r$ be the graph obtained from $\mathfs{G}$ by identifying all vertices in $\mathfs{V}\setminus \mathbf{B}(r)$ to a single vertex $\zeta_r$ and then removing all edges connecting $\zeta_r$ to itself. For any non-empty, finite $A\subset \mathfs{V}$ and $y\in A$, the wired harmonic measure of $A$ at $y$ is defined as 
	\begin{equation}\label{def_harmonic_measure_2}
		\mathbb{H}_A(y):=\lim_{r\to \infty}\mathbb{P}_{\zeta_r}(S_{\tau_A}=y\mid \tau_A<\tau^+_{\{\zeta_r\}}),
	\end{equation}
	where the law on the RHS is the probability distribution for the random walk on $\mathfs{G}_r$ starting from $\zeta_r$, and $\tau^+_{F}:=\inf\{n\ge 1: S_n\in F\}$. However, the existence of the limit in (\ref{def_harmonic_measure_2}) is not valid for all graphs (only confirmed for transient graphs; see \cite[Exercise 2.50 (c)]{lyons2017probability}), and is an interesting topic on its own right (although it is not the focus of this study). Readers may refer to \cite{boivin2013existence} and \cite[Section 10.7]{lyons2017probability} for related results. Here we employ the definition in (\ref{def_harmonic_measure_2}) since it is consistent with the one presented in (\ref{1.1}), and is valid for all graphs mentioned in this paper.

	Extremal values of harmonic measures have been extensively studied in mathematics and physics, mostly in the context of interacting particle systems driven by diffusion (such as DLA, DBM, Hastings-Levitov and ALE mentioned above). Particularly, in the case of $\mathbb{Z}^d$, Kesten \cite{kesten1987hitting} proved the following polynomial upper bounds (this type of estimate is commonly named ``discrete Beurling's estimate'') for harmonic measures: for any $d\ge 2$, there exists a constant $C(d)>0$ such that for every non-empty, finite, connected set $A\subset \mathbb{Z}^d$,
	\begin{equation}\label{estimate_Kesten}
		\max_{y\in A}\mathbb{H}_A(y)\le \left\{\begin{array}{ll}C(2)[\mathrm{rad}(A)]^{-\frac{1}{2}} & \text{for } d=2\\
			~&~\\
			C(d) |A|^{\frac{2-d}{d}}& \text{for } d\ge3\end{array}\right.,
	\end{equation}
	where $\mathrm{rad}(A):=\max_{y\in A}|y|$. Note that for $d=2$, while the endpoints of line segments achieve the RHS of \eqref{estimate_Kesten} up to a multiplicative constant, it remains uncertain whether these points constitute the maximum in \eqref{estimate_Kesten}. With the help of (\ref{estimate_Kesten}), Kesten \cite{kesten1987hitting} derived an upper bound for the growth rate of DLA. Readers may refer to \cite[Sections 2.5 and 2.6]{lawler2013intersections} for other versions of discrete Beurling's estimates and more applications on DLA. Subsequently, Lawler and Limic \cite{lawler2004beurling}, and Benjamini and Yadin \cite{benjamini2017upper} extended the discrete Beurling's estimates to more general random walks. Furthermore, Makarov \cite{makarov1985distortion} established that for any domain $\Omega \subset \mathbb{R}^2$ whose boundary is a Jordan curve, the Hausdorff dimension of the support of its continuous harmonic measure is $1$. Later, Lawler \cite{lawler1993discrete} proved its discrete analogue as follows: on $\mathbb{Z}^2$, for any $\frac{1}{2}<\alpha<1$ and $\beta<\alpha-\frac{1}{2}$, there exists a constant $C(\alpha,\beta)>0$ such that for every connected $A\subset\mathbb{Z}^2$ with $\mathrm{rad}(A)=R$,
	\begin{equation}
			\mathbb{H}_A\Big(\big\{y\in A: R^{-1}e^{-(\log R)^\alpha}\le \mathbb{H}_A(y)\le R^{-1}e^{(\log R)^\alpha}\big\}\Big)\ge 1-\frac{C}{(\log R)^\beta}.
	\end{equation}
	Moreover, Benjamini \cite{benjamin1997support} established that the cardinality of the support of the harmonic measure of any set contained in a box on $\mathbb{Z}^d$ with side length $R$ is at most $o(R^d)$, and afterward, a quantitative version was given by Bolthausen and M{\"u}nch-Berndl \cite{bolthausen2001quantitative}. As shown above, these results concentrated on either the upper bounds, or the typical values, or the supports of harmonic measures, and only considered the sets with connectivity or bounded radius. Our understanding of the minimal value of harmonic measures of general, possibly disconnected or spread-out sets, remains limited. Recently, in the study of HAT \cite{calvert2021existence,calvert2021collapse}, the least positive value of harmonic measures of general sets (which is the focus of \textbf{Q2}) played an important role in bounding the time of collapse in HAT.

	Now let us formulate \textbf{Q2}. Assume that the harmonic measure on $\mathfs{G}=(\mathfs{V},\mathfs{E})$ is well-defined. For any integer $n\ge 2$,	we denote the least positive value that a vertex can have in a set
	of cardinality $n$ by 
	\begin{equation}\label{def_Mn}
		\mathfs{M}_n(\mathfs{G})=\inf_{A\subset \mathfs{V},y\in A:|A|=n,\mathbb{H}_A(y)>0} \mathbb{H}_A(y). 
	\end{equation}


	In fact, the behavior of $\mathfs{M}_n(\mathfs{G})$ can vary significantly over different types of graphs. For example, consider the recurrent graph in Figure \ref{fig:most_recurrent}, which is obtained from $\mathbb{Z}$ by attaching an additional edge to each vertex. Denote $A^{(k)}:=\{(0,0),(-1,0), (k,1)\}$ for integer $k\ge 1$. Notice that for any $j\ge k$, the number of times when a random walk starting from $(j,0)$ visits $(k,0)$ before hitting $(0,0)$ is a geometry random variable with average $2k$. Moreover, in each visit to $(k,0)$ there is a probability of $1/3$ to hit the vertex $(k,1)\in A^{(k)}$. As a result, we have $\lim_{k\to\infty}\mathbb{H}_{A^{(k)}}((0,0))= 0$ and hence, $\mathfs{M}_3(\mathfs{G})=0$. This means that one can get an arbitrarily small positive value of the harmonic measure by a set containing only three vertices! 
	
	\begin{figure}[h!]
		\begin{tikzpicture}[scale=0.8]
			\node at (-1,0) [circle,fill=blue,inner sep=1pt]{}; \node at (-2,0) [circle,fill=blue,inner sep=1pt]{};\node at (-3,0) [circle,fill=blue,inner sep=1pt]{};\node at (-4,0) [circle,fill=blue,inner sep=1pt]{}; \node at (-5,0) [circle,fill=red,inner sep=2pt]{};\node at (-6,0) [circle,fill=black,inner sep=2pt]{};\node at (-6,1) [circle,fill=blue,inner sep=1pt]{};
			
			\node at (-5,-0.5) [] { $(0,0)$};
			
			\node at (-7,0.3) [] { $(-1,0)$};
			
			\node at (-1,1) [circle,fill=blue,inner sep=1pt]{}; \node at (-2,1) [circle,fill=blue,inner sep=1pt]{};\node at (-3,1) [circle,fill=blue,inner sep=1pt]{};\node at (-4,1) [circle,fill=blue,inner sep=1pt]{}; \node at (-5,1) [circle,fill=blue,inner sep=1pt]{};
			\node at (0,0) [circle,fill=blue,inner sep=1pt]{}; \node at (4,1) [circle,fill=red,inner sep=1pt]{}; 
			\node at (1,0) [circle,fill=blue,inner sep=1pt]{};	\node at (2, 0)[circle,fill=blue,inner sep=1pt]{} ;\node at (3,0) [circle,fill=blue,inner sep=1pt]{};\node at (4,0) [circle,fill=blue,inner sep=1pt]{};\node at (5,0) [circle,fill=blue,inner sep=1pt]{};\node at (5,1) [circle,fill=blue,inner sep=1pt]{};\node at (4,1) [circle,fill=black,inner sep=2pt]{};
			\node at (4,1.5) []{$(k,1)$};
			
			\node at (3,1) [circle,fill=blue,inner sep=1pt]{};\node at (2,1) [circle,fill=blue,inner sep=1pt]{};\node at (1,1) [circle,fill=blue,inner sep=1pt]{};\node at (0,1) [circle,fill=blue,inner sep=1pt]{};
			
			\draw[blue, dashed] (-7,0)--(6,0);	
			\foreach \x in {-6,...,5}
			{
				\draw[blue, dashed] (\x,0)--(\x,1);	
			}
			
		\end{tikzpicture}
		\caption{The ``$\mathbb{Z}$ with hairs'' example such that $\mathfs{M}_3(\mathfs{G})=0$} 
		\label{fig:most_recurrent}
	\end{figure}
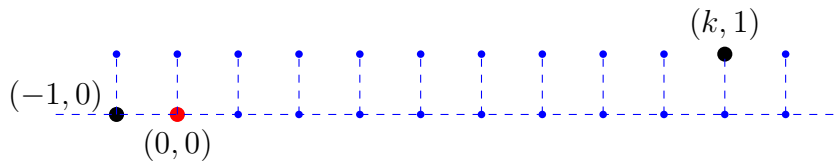

	For transient graphs, putting a faraway obstacle will no longer have a significant impact on the harmonic measure, as a remote vertex is almost never reached by a random walk. Consequently, unlike the example in Figure \ref{fig:most_recurrent} where the set tends to become sparse to achieve a smaller harmonic measure, the optimal set that minimizes the harmonic measure is more likely to take on a ``tunnel'' shape. For instance, for the 3-regular tree $\mathbb{T}_3$, it is not difficult to see that the least positive value of the escape probability $\mathrm{Es}_A(y):=\mathbb{P}_{y}\left(\tau_{A}^+= \infty\right)$ (where $|A|=n$ and $y\in A$) is achieved by the example in Figure \ref{fig:3tree}. Combined with the relation between the harmonic measure and the escape probability (see (\ref{2.14})), it implies that $\mathfs{M}_n(\mathbb{T}_3)$ decays exponentially with respect to $n$, and its decay rate is realized by the example in Figure \ref{fig:3tree}. More precisely, we have $\mathfs{M}_n(\mathbb{T}_3)= (\frac{3-\sqrt{5}}{2})^{n+o(n)}$. To see this, consider the number sequence $\{q_i\}_{i=1}^{n}$, representing the escape probability through a tunnel of length $n$, when starting $i$ steps from the tunnel's end,  with $q_1=c\in (0,1)$, $q_n=0$ and $q_i=\frac{1}{3}(q_{i-1}+q_{i+1})$ for $2\le i\le n-1$.


	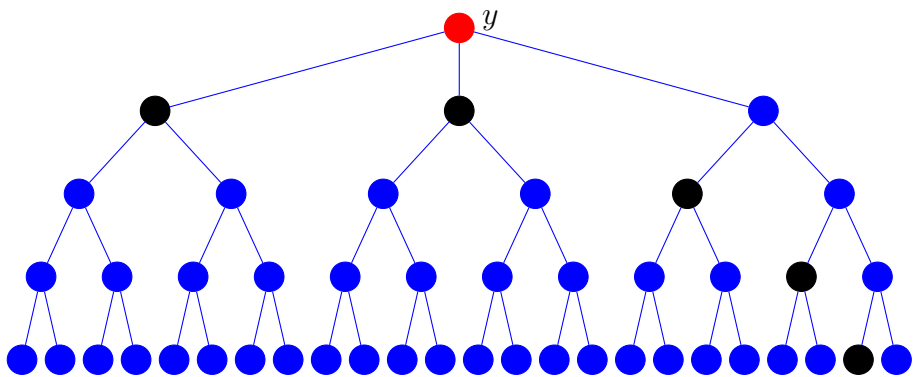
\begin{figure}[h!]
		\begin{tikzpicture}
			[
			level distance=1.1cm,
			level 1/.style={sibling distance=4cm},
			level 2/.style={sibling distance=2cm},
			level 3/.style={sibling distance=1cm},
			level 4/.style={sibling distance=0.5cm},
			main node/.style={circle,blue, fill, draw, minimum size=0.1cm},
			every edge/.style={blue, draw},
			]
			\node at (0.41,0.1) {$y$};
			\node[main node,red] {}
			child[-, blue] {node[main node,black] {}
				child {node[main node] {}
					child {node[main node] {}
						child {node[main node] {}}
						child {node[main node] {}}
					}
					child[-, blue] {node[main node] {}
						child {node[main node] {}}
						child {node[main node] {}}
					}
				}
				child[-, blue] {node[main node] {}
					child {node[main node] {}
						child {node[main node] {}}
						child {node[main node] {}}
					}
					child[-, blue] {node[main node] {}
						child {node[main node] {}}
						child {node[main node] {}}
					}
				}
			}
			child[-, blue] {node[main node,black] {}
				child {node[main node] {}
					child {node[main node] {}
						child {node[main node] {}}
						child {node[main node] {}}
					}
					child {node[main node] {}
						child {node[main node] {}}
						child {node[main node] {}}
					}
				}
				child {node[main node] {}
					child {node[main node] {}
						child {node[main node] {}}
						child {node[main node] {}}
					}
					child {node[main node] {}
						child {node[main node] {}}
						child {node[main node] {}}
					}
				}
			}
			child[-, blue] {node[main node] {}
				child {node[main node,black] {}
					child {node[main node] {}
						child {node[main node] {}}
						child {node[main node] {}}
					}
					child {node[main node] {}
						child {node[main node] {}}
						child {node[main node] {}}
					}
				}
				child[-, blue] {node[main node] {}
					child {node[main node,black]{}
						child {node[main node]{}}
						child {node[main node]{}}
					}
					child {node[main node]{}
						child {node[main node,black] {}}
						child {node[main node] {}}
					}
				}
			};
		\end{tikzpicture}
		\caption{The ``tunnel'' example reaching the decay rate of $\mathfs{M}_n(\mathbb{T}_3)$ \label{fig:3tree}}
	\end{figure}

	In the case of $\mathbb{Z}^2$, for any $A\subset \mathbb{Z}^2$ and $y\in A$ with $\mathbb{H}_A(y)>0$, it follows from Definition \ref{def1} that either $\mathbb{H}_A(y)=1$ (i.e. $A=\{y\}$), or there exists $z\in A\setminus \{y\}$ such that $\rho_{A,y}(z)\le \psi(\mathbb{Z}^2)$ (which is finite by Theorem \ref{thm1}). Thus, by induction, 
	\begin{equation}\label{lowerbound_psi}
	\ \ \ \ \ \ \ \ \ \ 	\mathbb{H}_A(y)\ge [\psi(\mathbb{Z}^2)]^{-(|A|-1)}, \ \ \forall A\subset \mathbb{Z}^2,y\in A\ \text{with}\ \mathbb{H}_A(y)>0.
	\end{equation}
	Moreover, $\mathfs{M}_n(\mathbb{Z}^2)$ is bounded from above by the harmonic measure at $y$ of the example presented in Figure \ref{fig:tube}, which decays exponentially. To sum up, we obtain the following estimates for $\mathfs{M}_n(\mathbb{Z}^2)$: 
		\begin{theorem}\label{coro1}
		There exist constants $C_1,c_1>0$ such that for any $n\ge 2$, 
		\begin{equation}\label{1.4}
		 e^{-C_1n}	\le \mathfs{M}_n(\mathbb{Z}^2) \le e^{-c_1n}. 
		\end{equation}
	\end{theorem}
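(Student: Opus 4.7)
The proof splits naturally along the two inequalities, and both halves are sketched already in the discussion preceding the theorem. The plan is to make each half precise.

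For the lower bound, the key input is Theorem \ref{thm1}, which furnishes a finite constant $\psi(\mathbb{Z}^2)$. Given any finite $A \subset \mathbb{Z}^2$ with $y \in A$, $|A| \ge 2$ and $\mathbb{H}_A(y) > 0$, Definition \ref{def1} guarantees the existence of $z \in A \setminus \{y\}$ with $\rho_{A,y}(z) \le \psi(\mathbb{Z}^2)$, i.e.\ $\mathbb{H}_A(y) \ge [\psi(\mathbb{Z}^2)]^{-1} \mathbb{H}_{A \setminus \{z\}}(y)$. The harmonic measure at $y$ in the smaller set $A \setminus \{z\}$ is still positive (harmonic measure is monotone in the set), so the argument can be iterated. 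After $|A|-1$ steps we reach $\mathbb{H}_{\{y\}}(y) = 1$, yielding $\mathbb{H}_A(y) \ge [\psi(\mathbb{Z}^2)]^{-(|A|-1)}$, which is exactly the left-hand inequality of \eqref{1.4} with $C_1 = \log \psi(\mathbb{Z}^2)$.

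For the upper bound, I would scale up the tube of Figure \ref{fig:tube} to a set $A_n$ consisting of roughly $n$ vertices: a closed rectangular annulus of interior dimensions $\lfloor n/2 \rfloor \times 1$ surrounding the target vertex $y$, with no opening (when $|A_n| = n$ is even; trivial adjustments otherwise). The outer boundary prevents a random walk entering from infinity from accessing $y$ without first being absorbed at a boundary vertex, so $y$ can only be hit through the ``interior'' strip, which requires the walk to enter through the one-vertex entrance and traverse the thin tube. The bound $\mathbb{H}_{A_n}(y) \le e^{-c_1 n}$ then reduces to showing that a random walk in the narrow strip $\{(i,1) : 1 \le i \le m\}$ reaches the far end with probability at most $\lambda^m$ for some $\lambda \in (0,1)$ depending only on the strip's width; this is a standard gambler's-ruin-type estimate, since at each visit to the strip entrance the walk has probability bounded below of escaping to infinity before reaching $y$. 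Combining with the fact that, conditional on ever hitting $A_n$, the first-hit location is distributed as $\mathbb{H}_{A_n}$, gives the desired $e^{-c_1 n}$ decay.

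The main obstacle is not serious here: both halves are essentially packaged from earlier material. The lower bound is a pure consequence of Theorem \ref{thm1} once one notices that monotonicity preserves positivity of the harmonic measure along the induction. The only genuine work lies in the upper bound, and even there the only non-trivial step is formalizing the exponential escape estimate inside the tube; this can be handled either by a direct one-dimensional random walk comparison (projecting onto the long axis of the tube and arguing that each return to the entrance provides a constant-probability chance to leak out through the outer face before reaching $y$) or by invoking the standard Green's function bounds for random walk in a strip. Neither bound in \eqref{1.4} attempts to identify the sharp decay rate, which justifies the unspecified constants $c_1, C_1$ and avoids any fine analysis.
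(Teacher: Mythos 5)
Your proof is correct and follows the paper's approach exactly: the lower bound is the induction via $\psi(\mathbb{Z}^2)<\infty$ recorded in \eqref{lowerbound_psi}, and the upper bound uses the tube of Figure \ref{fig:tube} (with your gambler's-ruin elaboration of the exponential decay, which the paper only asserts). A small slip: the tube must actually have an opening (your ``one-vertex entrance''), which contradicts ``with no opening'' earlier in your description, since a fully enclosed $y$ would have $\mathbb{H}_{A_n}(y)=0$ and so would be excluded from the infimum defining $\mathfs{M}_n$.
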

	As mentioned above, the estimation of $\mathfs{M}_n(\mathbb{Z}^2)$ has emerged in the study of HAT. Specifically, Calvert, Ganguly and Hammond \cite[Theorem 1.9]{calvert2021collapse} proved that $\mathfs{M}_n(\mathbb{Z}^2) \ge e^{-Cn\log(n)}$; in addition, if restricting to connected sets, then this lower bound can be improved to $e^{-Cn}$. In \cite{calvert2021collapse}, they further conjectured that the exponentially decaying lower bound can be proven without the connectivity condition. This conjecture is now settled by Theorem \ref{coro1}.

	Given Theorem \ref{coro1}, a natural subsequent question is which set achieves (or closely approaches) $\mathfs{M}_n(\mathbb{Z}^2)$. Notably, Lawler \cite[Section 2.5]{lawler2013intersections} showed that adding a faraway vertex will approximately halve the harmonic measure. Inspired by Lawler's observation, we consider a very sparse set in $\mathbb{Z}^2$ for which $(0,0)$ has exponentially small harmonic measure: $A_n:=\{(0,0)\}\cup \{ ({^{k}2},0): 0\le k\le n-2 \}$ (see Figure \ref{fig:2^n}), where ${^{k}2}$ is the $k$-th tetration of $2$ (i.e. ${^{0}2}:=1$ and ${^{k}2}:=2^{[{^{(k-1)}2}]}$ for $k\ge 1$). It can be proved by induction that 
	\begin{equation}
		\mathbb{H}_{A_n}((0,0))=\prod_{k=1}^{n-1}\bigg[  \frac{1}{2}-O\Big( \frac{{^{(k-1)}2}}{{^{k}2}}\Big) \bigg].
	\end{equation}
	\begin{figure}[H]
		\hskip-1.5cm
		\begin{tikzpicture}[scale=0.5]
			\draw[step=1cm,blue,ultra thin] (-20,0) grid (-1,2);
			\node at (-19,-0.01) []{$(0,0)$};  
			\node at (-19,1) [circle,fill=red,inner sep=2pt]{}; 	
			\node at (-18,1) [circle,fill=black,inner sep=2pt]{}; 
			\node at (-17,1) [circle,fill=black,inner sep=2pt]{};
			\node at (-15,1) [circle,fill=black,inner sep=2pt]{};
			\node at (-3,1) [circle,fill=black,inner sep=2pt]{};
		\end{tikzpicture}
		\caption{ A sparse set with exponentially small harmonic measure \label{fig:2^n}}
	\end{figure}
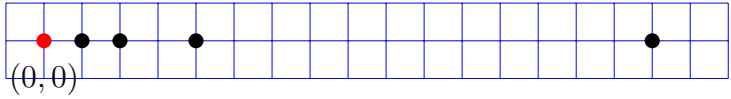
	However, this sparse set doesn't give the fastest exponential rate, as can be seen by the following example. Calvert, Ganguly and Hammond \cite[Example 1.11]{calvert2021collapse} proposed an increasing sequence of sets $\{D_n\}_{n\ge 2}$ such that $|D_n|=n$ and $\mathbb{H}_{D_n}((0,0))=(2+\sqrt{3})^{-2n+o(n)}$ (where the base number $(2+\sqrt{3})^{-1}$ is derived from the number sequence $\{q_i\}_{i=1}^{2n}$ with $q_{i}=\frac{1}{4}(q_{i-1}+q_{i+1})$, representing the probability to walk along a specified path of length $i$, and the power $2n+o(n)$ is the length of the tunnel). See Figure \ref{fig:spiral} for the construction of $D_n$. Furthermore, they \cite[Conjecture 1.10]{calvert2021collapse} conjectured that $\{D_n\}_{n\ge 2}$ approximately realizes the least positive value of harmonic measures on $\mathbb{Z}^2$. I.e., 
		\begin{equation}\label{1.5}
			\lim\limits_{n\to \infty} -\frac{1}{n}\log(\mathfs{M}_n(\mathbb{Z}^2)) = 2\log(2+\sqrt{3}). 
		\end{equation} 
	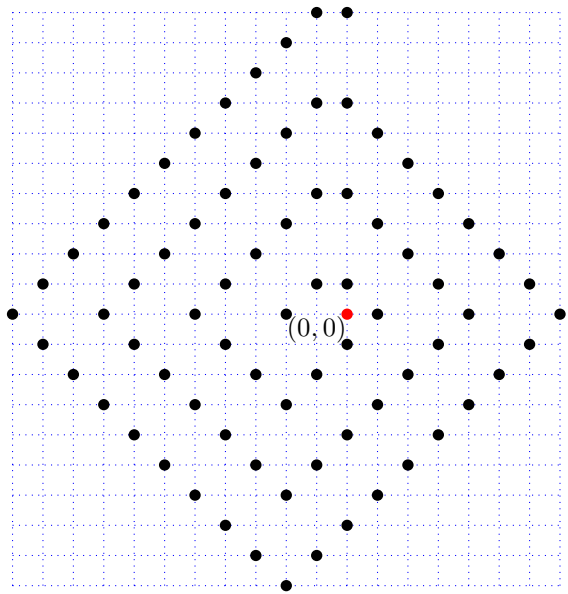
\begin{figure}[h!]
		\begin{tikzpicture}[scale=0.4]
			\draw[step=1cm,blue,thin, dotted] (-11,-9) grid (7,10);
			\node at (0,0) [circle,fill=red,inner sep=1.5pt]{}; \node at (-1,-0.5) []{\footnotesize $(0,0)$};
			\node at (0,1) [circle,fill=black,inner sep=1.5pt]{};       \node at (0, -1)[circle,fill=black,inner sep=1.5pt]{} ;\node at (-1,1) [circle,fill=black,inner sep=1.5pt]{};\node at (-2,0) [circle,fill=black,inner sep=1.5pt]{};\node at (1,0) [circle,fill=black,inner sep=1.5pt]{};\node at (-1,-2) [circle,fill=black,inner sep=1.5pt]{};\node at (-2,-3) [circle,fill=black,inner sep=1.5pt]{};\node at (-3,-2) [circle,fill=black,inner sep=1.5pt]{};\node at (-4,-1) [circle,fill=black,inner sep=1.5pt]{};\node at (-5,0) [circle,fill=black,inner sep=1.5pt]{};\node at (-4,1) [circle,fill=black,inner sep=1.5pt]{};\node at (-3,2) [circle,fill=black,inner sep=1.5pt]{};\node at (-2,3) [circle,fill=black,inner sep=1.5pt]{};\node at (-1,4) [circle,fill=black,inner sep=1.5pt]{};\node at (0,4) [circle,fill=black,inner sep=1.5pt]{};\node at (1,3) [circle,fill=black,inner sep=1.5pt]{};\node at (2,2) [circle,fill=black,inner sep=1.5pt]{};\node at (3,1) [circle,fill=black,inner sep=1.5pt]{};\node at (4,0) [circle,fill=black,inner sep=1.5pt]{};\node at (3,-1) [circle,fill=black,inner sep=1.5pt]{};\node at (2,-2) [circle,fill=black,inner sep=1.5pt]{};\node at (1,-3) [circle,fill=black,inner sep=1.5pt]{};\node at (0,-4) [circle,fill=black,inner sep=1.5pt]{};\node at (-1,-5) [circle,fill=black,inner sep=1.5pt]{};\node at (-2,-6) [circle,fill=black,inner sep=1.5pt]{};\node at (-3,-5) [circle,fill=black,inner sep=1.5pt]{};\node at (-4,-4) [circle,fill=black,inner sep=1.5pt]{};\node at (-5,-3) [circle,fill=black,inner sep=1.5pt]{};\node at (-6,-2) [circle,fill=black,inner sep=1.5pt]{};\node at (-7,-1) [circle,fill=black,inner sep=1.5pt]{};\node at (-8,0) [circle,fill=black,inner sep=1.5pt]{}; \node at (-7,1) [circle,fill=black,inner sep=1.5pt]{};\node at (-6,2) [circle,fill=black,inner sep=1.5pt]{};
			\node at (-5,3) [circle,fill=black,inner sep=1.5pt]{};\node at (-4,4) [circle,fill=black,inner sep=1.5pt]{};\node at (-3,5) [circle,fill=black,inner sep=1.5pt]{};\node at (-2,6) [circle,fill=black,inner sep=1.5pt]{};\node at (-1,7) [circle,fill=black,inner sep=1.5pt]{};\node at (0,7) [circle,fill=black,inner sep=1.5pt]{};\node at (1,6) [circle,fill=black,inner sep=1.5pt]{};\node at (2,5) [circle,fill=black,inner sep=1.5pt]{};\node at (3,4) [circle,fill=black,inner sep=1.5pt]{};\node at (4,3) [circle,fill=black,inner sep=1.5pt]{};\node at (5,2) [circle,fill=black,inner sep=1.5pt]{};\node at (6,1) [circle,fill=black,inner sep=1.5pt]{};\node at (7,0) [circle,fill=black,inner sep=1.5pt]{};\node at (6,-1) [circle,fill=black,inner sep=1.5pt]{};\node at (5,-2) [circle,fill=black,inner sep=1.5pt]{};\node at (4,-3) [circle,fill=black,inner sep=1.5pt]{};\node at (3,-4) [circle,fill=black,inner sep=1.5pt]{};\node at (2,-5) [circle,fill=black,inner sep=1.5pt]{};\node at (1,-6) [circle,fill=black,inner sep=1.5pt]{};\node at (0,-7) [circle,fill=black,inner sep=1.5pt]{};\node at (-1,-8) [circle,fill=black,inner sep=1.5pt]{};\node at (-2,-9) [circle,fill=black,inner sep=1.5pt]{};\node at (-3,-8) [circle,fill=black,inner sep=1.5pt]{};\node at (-4,-7) [circle,fill=black,inner sep=1.5pt]{};
			\node at (-5,-6) [circle,fill=black,inner sep=1.5pt]{};\node at (-6,-5) [circle,fill=black,inner sep=1.5pt]{};\node at (-7,-4) [circle,fill=black,inner sep=1.5pt]{};\node at (-8,-3) [circle,fill=black,inner sep=1.5pt]{};\node at (-9,-2) [circle,fill=black,inner sep=1.5pt]{};\node at (-10,-1) [circle,fill=black,inner sep=1.5pt]{};\node at (-11,0) [circle,fill=black,inner sep=1.5pt]{};\node at (-10,1) [circle,fill=black,inner sep=1.5pt]{};\node at (-9,2) [circle,fill=black,inner sep=1.5pt]{};\node at (-8,3) [circle,fill=black,inner sep=1.5pt]{};\node at (-7,4) [circle,fill=black,inner sep=1.5pt]{};\node at (-6,5) [circle,fill=black,inner sep=1.5pt]{};\node at (-5,6) [circle,fill=black,inner sep=1.5pt]{};\node at (-4,7) [circle,fill=black,inner sep=1.5pt]{};\node at (-3,8) [circle,fill=black,inner sep=1.5pt]{};\node at (-2,9) [circle,fill=black,inner sep=1.5pt]{};\node at (-1,10) [circle,fill=black,inner sep=1.5pt]{};\node at (0,10) [circle,fill=black,inner sep=1.5pt]{};
		\end{tikzpicture}
		\caption{Conjectured set that approximately realizes $\mathfs{M}_n(\mathbb{Z}^2)$\label{fig:spiral}}
	\end{figure}

	The proof of (\ref{1.5}) will be presented in an upcoming paper \cite{work-in-progress}. Note that $\mathbb{H}_{D_n}((0,0))=(2+\sqrt{3})^{-2n+o(n)}$ implies $\liminf\limits_{n\to \infty} -\frac{1}{n}\log(\mathfs{M}_n(\mathbb{Z}^2))\ge 2\log(2+\sqrt{3})$. Combined with (\ref{lowerbound_psi}), this yields
	\begin{equation*}
		2\log(2+\sqrt{3})	\le 	\liminf\limits_{n\to \infty} -\frac{1}{n}\log(\mathfs{M}_n(\mathbb{Z}^2))\le \limsup\limits_{n\to \infty} -\frac{1}{n}\log(\mathfs{M}_n(\mathbb{Z}^2)) \le \log(\psi(\mathbb{Z}^2)).
	\end{equation*}
	As a result, to establish (\ref{1.5}), it suffices to prove that $\psi(\mathbb{Z}^2)\le (2+\sqrt{3})^2$ (and therefore $\psi(\mathbb{Z}^2)= (2+\sqrt{3})^2$). However, to our knowledge, verifying the exact value of $\psi(\mathbb{Z}^2)$ is significantly more challenging than proving (\ref{1.5}). We hereby record the following conjecture regarding $\psi(\mathbb{Z}^2)$, which coincides with (\ref{1.5}). 
	\begin{conjecture}
		$\psi(\mathbb{Z}^2)=(2+\sqrt{3})^2$. 
	\end{conjecture}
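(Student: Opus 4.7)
The lower bound $\psi(\mathbb{Z}^2)\geq(2+\sqrt{3})^2$ has already been observed in the paragraph preceding the conjecture: iterating \eqref{lowerbound_psi} over the spiral $D_n$ and combining with the (forthcoming) identity $\mathbb{H}_{D_n}((0,0))=(2+\sqrt{3})^{-2n+o(n)}$ gives $\psi(\mathbb{Z}^2)^{n-1}\geq (2+\sqrt{3})^{2n-o(n)}$, and sending $n\to\infty$ yields the bound. The entire content of the conjecture therefore reduces to the reverse inequality $\psi(\mathbb{Z}^2)\leq (2+\sqrt{3})^2$, which is the focus of the plan below.

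The proposal is to sharpen the marginal-vertex construction underlying Theorem \ref{thm1} so that, for every admissible pair $(A,y)$, it supplies a distinguished removable vertex $z^*\in A\setminus\{y\}$ with sharp price. Intuitively, $z^*$ should be the outermost tip of the narrowest corridor through which a random walker from infinity is most likely to approach $y$ before hitting $A$. Using a last-exit decomposition at the boundary of $A$, the ratio $\rho_{A,y}(z^*)$ can be rewritten in terms of the relative probabilities of completing versus bypassing the final corridor steps leading to $y$; each such step contributes a factor $(2+\sqrt{3})$ by the characteristic-root analysis of the recurrence $q_i=\tfrac{1}{4}(q_{i-1}+q_{i+1})$ highlighted in the excerpt, whose roots are $2\pm\sqrt{3}$. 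In the extremal case where the final corridor has width one and $z^*$ lies exactly two steps from $y$ along it, this analysis gives precisely $\rho_{A,y}(z^*)=(2+\sqrt{3})^2$.

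The principal obstacle is certifying that the width-one-corridor configuration is genuinely extremal. The argument of Theorem \ref{thm1} produces only a finite but unspecified bound on $\min_z\rho_{A,y}(z)$ and is insensitive to the sharp constant. To reach $(2+\sqrt{3})^2$ one must establish a strict monotonicity statement: any deviation of $A$ from the spiral-like shape near $z^*$ — a thicker tunnel, branching, multiple candidate tips, or disconnected far-field pieces — opens an additional escape channel after $z^*$ is removed, strictly decreasing $\min_z\rho_{A,y}(z)$ below $(2+\sqrt{3})^2$. I would attempt this by a local perturbation argument, quantifying the extra escape contribution via the discrete Beurling estimate \eqref{estimate_Kesten} applied to $A\setminus\{z^*\}$ together with a potential-theoretic comparison of capacities before and after removal. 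The hardest step, and the main technical gap, is establishing this monotonicity quantitatively and in full generality: a detailed geometric classification of near-extremal $A$ seems essential, and has no direct precedent in the existing literature on discrete harmonic measures.
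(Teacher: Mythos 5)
This statement is recorded in the paper as an open conjecture, not as a theorem; the paper proves only that $\psi(\mathbb{Z}^2)<\infty$ (Theorem~\ref{thm1}) and explicitly defers the exact value, so there is no proof in the paper to compare against. Your reading of the situation is accurate: the lower bound $\psi(\mathbb{Z}^2)\ge(2+\sqrt{3})^2$ does follow from combining $\mathbb{H}_{D_n}((0,0))=(2+\sqrt{3})^{-2n+o(n)}$ (cited from \cite{calvert2021collapse}) with the iterated removal bound $\mathfs{M}_n(\mathbb{Z}^2)\ge\psi(\mathbb{Z}^2)^{-(n-1)}$ from~\eqref{lowerbound_psi}, and the paper itself says as much just before stating the conjecture. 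So the entire content is indeed the reverse inequality $\psi(\mathbb{Z}^2)\le(2+\sqrt{3})^2$.

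For that upper bound, what you write is a plan rather than a proof, and you say so yourself. The missing piece you flag is exactly the crux: the machinery of Section~\ref{section_psi_finite} deliberately trades sharpness for robustness. The marginal-vertex selection and the Green's function comparison in Lemma~\ref{lemma_upper_bound} together give a bound of the form $\rho_A(z_\dagger)\le C$ with $C$ set to things like $4^{10}$ or $4^{C_6+7}$ or $c_3^{-1}c_4^{-1}$, which are orders of magnitude above $(2+\sqrt{3})^2\approx 13.93$. Nothing in that argument forces the ratio $G_{A\setminus\{z_\dagger\}}(v_1,v_2)/G_A(v_1,v_2)$ to be close to the characteristic root squared; the recurrence $q_i=\tfrac14(q_{i-1}+q_{i+1})$ governs the width-one tunnel geometry, but the Green's function comparison used in the paper (Lemma~\ref{lemma_compare_green}, invariance principle, Harnack-type constants) is far too lossy to recover it. Your proposed strict-monotonicity step — that every deviation from the spiral geometry near $z^*$ strictly lowers $\min_z\rho_{A,y}(z)$ — is exactly what is needed, but it is not supplied, and a Beurling-estimate perturbation argument is unlikely to yield the sharp constant on its own since \eqref{estimate_Kesten} is also only order-of-magnitude. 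As it stands, the proposal correctly isolates the open problem but does not close it, which matches the status the paper itself assigns to this statement.
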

	
	Though for $\mathbb{Z}^d$ with $d\ge 3$, $\psi(\mathbb{Z}^d)=\infty$, and thus the aformentioned argument for Theorem \ref{coro1} fails, we can demonstrate the exponential decay for $\mathfs{M}_n(\mathbb{Z}^d)$ by adapting the argument to the removal of an entire $*$-connected component.
	\begin{theorem}\label{thm2}
		In the case of $\mathbb{Z}^d$ for $d\ge 3$, there exist constants $C_2(d),c_2(d)>0$ such that for any integer $n\ge 2$, 
		\begin{equation}\label{1.6}
			e^{-C_2n}\le \mathfs{M}_n(\mathbb{Z}^d) \le e^{-c_2n}. 
		\end{equation}
	\end{theorem}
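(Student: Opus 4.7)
The upper bound in \eqref{1.6} is established by an explicit construction. Generalizing the spiral-tunnel example of Figure \ref{fig:spiral}, we take $A\subset\mathbb{Z}^d$ to be a long self-avoiding path of length of order $n$ with $y$ at one endpoint, augmented at each step by auxiliary vertices placed in the transverse directions so that all but one or two of the $2d$ neighbors of each path vertex lie in $A$. A random walk from infinity that wishes to reach $y$ before hitting the rest of $A$ must then traverse the path step by step, picking up a fixed multiplicative cost at each step; this yields $\mathbb{H}_A(y)\le e^{-c_2 n}$ with $|A|=n$.

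For the lower bound, we adapt the induction underlying Theorem \ref{coro1} to the level of $*$-connected subsets. The key claim is: there exists $C_0=C_0(d)>0$ such that for every finite $A\subset\mathbb{Z}^d$ and every $y\in A$ with $|A|\ge 2$ and $\mathbb{H}_A(y)>0$, there is a $*$-connected subset $B\subseteq A\setminus\{y\}$ satisfying
\[
\mathbb{H}_{A\setminus B}(y)/\mathbb{H}_A(y)\le e^{C_0|B|}.
\]
Granting this claim, induction on $|A|$ yields $\mathbb{H}_A(y)\ge e^{-C_0(|A|-1)}$: the base case $|A|=1$ gives $\mathbb{H}_{\{y\}}(y)=1$, and in the inductive step we combine the claim with the inductive hypothesis applied to $A\setminus B$ (of cardinality $|A|-|B|$) to obtain $\mathbb{H}_A(y)\ge e^{-C_0|B|}\mathbb{H}_{A\setminus B}(y)\ge e^{-C_0(|A|-1)}$, giving the desired bound with $C_2=C_0$.

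The main obstacle is proving this key claim. Because $\psi(\mathbb{Z}^d)=\infty$ for $d\ge 3$ by Theorem \ref{thm1}, single-vertex removal cannot be uniformly controlled — the Klein-bottle configurations appearing in the proof of Theorem \ref{thm1} explicitly certify this — so one must genuinely allow $B$ to contain several vertices. Our plan is to select $B$ as a ``marginal'' $*$-connected piece sitting on the outer $*$-boundary of $A$, in analogy with the marginal-vertex technique used for $\mathbb{Z}^2$. We then seek a per-vertex decomposition: each $x\in B$ contributes at most a factor $e^{C_0}$ to the ratio $\mathbb{H}_{A\setminus B}(y)/\mathbb{H}_A(y)$, obtained by comparing random walks from infinity that first meet $A$ at $x$ to walks meeting $A\setminus B$ elsewhere, using local random-walk estimates on $\mathbb{Z}^d$; accumulating over the $|B|$ vertices then yields $e^{C_0|B|}$. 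The most delicate case is when $A\setminus\{y\}$ is itself a single large $*$-connected component, in which no small subset is obviously removable; here we apply geometric arguments on the $*$-boundary structure of $\mathbb{Z}^d$, dual to those producing the Klein-bottle obstruction in the proof of Theorem \ref{thm1}, to identify a suitable peelable $B$.
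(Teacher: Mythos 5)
Your upper bound via a tube-like construction matches the paper. The lower-bound scheme is close in spirit but has a genuine gap at exactly the point you flag as delicate, and the paper in fact takes a slightly different route to avoid it.

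Your key claim asserts that for \emph{every} $A$ with $\mathbb{H}_A(y)>0$ one can find a $*$-connected $B\subseteq A\setminus\{y\}$ with $\mathbb{H}_{A\setminus B}(y)/\mathbb{H}_A(y)\le e^{C_0|B|}$, and you admit you do not know how to produce such a $B$ when $A$ is a single $*$-connected component containing $y$; the ``dual Klein-bottle'' geometric argument you gesture at is unspecified, and it is not clear any such peeling lemma holds. The paper avoids the issue: it does not attempt an induction that peels $*$-connected \emph{subsets} from a $*$-connected set. Instead it proves two separate lemmas. Lemma~\ref{lemma_exponential_price} bounds the price of removing an entire maximal $*$-cluster $D$ of $A$ that does not contain $\bm{0}$ by $e^{C_7|D|}$ (this is the analogue of your claim, but only for full $*$-components, using the Green's-function comparison machinery of Section~\ref{subsection_decomposition} together with Lemma~\ref{lemma_escape}). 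Lemma~\ref{lemma_3d_connect_harmonic} then handles the remaining base case directly and non-inductively: if $A$ is $*$-connected and contains $\bm{0}$, then $A\subset\mathbf{B}(d|A|)$, so the escape probability $\mathrm{Es}_A(\bm{0})$ is at least $e^{-C|A|}$ by forcing a short path out of $\mathbf{B}(2d|A|)$ and then applying the ball-escape estimate, while $\mathrm{cap}(A)\le\mathrm{cap}(\{\bm{0}\})\,|A|$ by subadditivity; combining via $\mathbb{H}_A(\bm{0})=\mathrm{Es}_A(\bm{0})/\mathrm{cap}(A)$ gives $\mathbb{H}_A(\bm{0})\ge e^{-C_8|A|}$. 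One then strips the non-origin $*$-clusters one by one and applies the direct bound to the surviving $*$-connected core. You should replace your open-ended ``peel a marginal $*$-connected piece'' step with this direct escape/capacity bound on the $*$-connected core; as written, your proof has a hole precisely where you acknowledge it.
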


As for the set realizing $\mathfs{M}_n(\mathbb{Z}^d)$ ($d\ge 3$), it is worth noting that a very sparse set, such as the one presented in Figure \ref{fig:2^n}, will no longer yield an exponential decay for the harmonic measure. This is primarily due to the transience of $\mathbb{Z}^d$ for $d\ge 3$, which causes the vanishing of the hitting probability at a faraway vertex for the random walk. This observation also motivates us to present the following conjecture: (the definition of ``$*$-connected'' can be found in Section \ref{subsection_Zd})

\begin{conjecture}\label{conj_*connect}
	In the case of $\mathbb{Z}^d$ with $d\ge 3$, for any integer $n\ge 2$, the sets achieving $\mathfs{M}_n(\mathbb{Z}^d)$ are $*$-connected.
\end{conjecture}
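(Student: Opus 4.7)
The plan is to argue by contradiction. Suppose $A\subset\mathbb{Z}^d$ with $|A|=n$ and $y\in A$ satisfies $\mathbb{H}_A(y)=\mathfs{M}_n(\mathbb{Z}^d)$ but $A$ is not $*$-connected. Let $C$ denote the $*$-connected component of $A$ containing $y$, and set $D:=A\setminus C\neq\emptyset$. The objective is to exhibit $A''\subset\mathbb{Z}^d$ with $|A''|=n$ and $\mathbb{H}_{A''}(y)<\mathbb{H}_A(y)$, contradicting minimality. The natural candidate for $A''$ is obtained by translating (the $*$-components of) $D$ along a suitable lattice direction until $D$ becomes $*$-adjacent to $C$; along the way one resolves collisions by a generic choice of direction, or, if necessary, by attaching the different $*$-components of $D$ to $C$ one at a time.

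To analyze this translation I would invoke the identity referenced as (2.14) in the paper, which in $\mathbb{Z}^d$ reads (up to degree normalization)
$\mathbb{H}_A(y)=\mathrm{Es}_A(y)/\mathrm{cap}(A)$, where $\mathrm{cap}(A)=\sum_{z\in A}\mathrm{Es}_A(z)$. First I would show, using transience of $\mathbb{Z}^d$ for $d\ge 3$ and decay of the Green's function, that as the translation parameter tends to infinity the ratio converges to $\mathrm{Es}_C(y)/(\mathrm{cap}(C)+\mathrm{cap}(D))$, so that the ``far'' configuration is essentially the worst case up to an arbitrarily small correction. Next I would compare this limit with the value at $A''$: one has $\mathrm{Es}_{A''}(y)\le\mathrm{Es}_C(y)$, strictly, because the translated copy of $D$ blocks new escape trajectories from $y$ to infinity; meanwhile $\mathrm{cap}(A'')\le\mathrm{cap}(C)+\mathrm{cap}(D)$ by subadditivity of capacity. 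The point is to show that in passing from the ``far'' to the ``adjacent'' configuration, the escape probability at $y$ drops strictly faster than the capacity, so that the harmonic measure decreases.

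The main obstacle is exactly this last comparison, since both numerator and denominator move in the same direction, and a crude subadditivity bound is insufficient. I would try two complementary routes. The first is a monotone translation argument: fix an admissible lattice direction $e$, translate $D$ continuously (by integer shifts) toward $C$, and derive a discrete first-variation identity for $\mathbb{H}$ along the translation; the goal is to show that the incremental change is non-positive at every step while $C$ and the translated copy remain disjoint, and strictly negative at the step that creates the first $*$-contact. The second is an induction on the number of $*$-connected components: assuming the statement for configurations with $k$ components, reduce the case of $k+1$ components by showing that some pair of components can be merged without increasing the harmonic measure at $y$.

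I expect the genuine difficulty to lie in ruling out pathological placements where $D$ already sits in a region of low capacity relative to $C$, so that dragging it to $C$ reduces the capacity by the same relative amount as it reduces the escape probability. Handling this should require quantitative control of $\mathrm{Es}_{C\cup D'}(y)$ as a function of the relative position of $D'$, presumably via Dirichlet-principle or FKG-type comparisons applied to escape events; in particular, one needs a lower bound on the strict decrease of $\mathrm{Es}$ upon making the configuration $*$-connected, in terms of quantities (such as $\mathrm{Es}_{C}$ evaluated on the boundary of $C$) that also control the capacity loss. This is the step where the present excerpt's vertex-removal machinery no longer applies, and where new input is needed beyond what is developed for Theorem \ref{thm2}.
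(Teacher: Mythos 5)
This statement is Conjecture~\ref{conj_*connect} in the paper; it is an \emph{open problem}, not a theorem, and the paper offers no proof of it. Your proposal is therefore not comparable to any argument in the paper, and it does not itself settle the conjecture: you explicitly concede that the decisive step (a strict decrease of $\mathbb{H}_A(y)$ when the far $*$-components are dragged into $*$-contact with the component containing $y$) is missing, and that step is precisely the substance of the conjecture.

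Two more concrete difficulties with the route as sketched. First, your ``far'' limit $\mathrm{Es}_C(y)/(\mathrm{cap}(C)+\mathrm{cap}(D))$ is not a valid baseline against which to compare $A''$: the hypothetical minimizer $A$ is some particular placement of $D$, not the limiting far configuration, and the map from the position of $D$ to $\mathbb{H}_{C\cup D}(y)$ need not be monotone along a lattice direction; it can decrease as $D$ recedes (adding a remote point to any fixed set strictly decreases the harmonic measure at $y$, by (\ref{2.14}), since $\mathrm{Es}$ at $y$ is unchanged in the limit while $\mathrm{cap}$ increases), so ``first variation along a translation'' arguments face a genuine sign ambiguity. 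Second, the conjecture as literally stated already has a boundary issue at $n=2$: by the lattice automorphism $w\mapsto x+y-w$ one gets $\mathbb{H}_{\{x,y\}}(x)=\tfrac12$ for \emph{every} pair $\{x,y\}$, so $\mathfs{M}_2(\mathbb{Z}^d)=\tfrac12$ is attained by many non-$*$-connected sets; a correct formulation presumably restricts to $n$ large or asserts that \emph{some} minimizer is $*$-connected. Any attempt at a proof should confront this degeneracy, which your argument (as written) does not, and which suggests that one cannot hope for a strict inequality in full generality. Until the strict-comparison step is supplied (you yourself flag Dirichlet- or FKG-type comparisons as a possible source, but provide none), the proposal is a plan rather than a proof.
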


Another interesting followup to this paper, is to study how the least positive value of harmonic measures changes when we impose a minimum separation distance among the vertices. Addressing this question will help us understand how the connectivity of the set influences the extremum of harmonic measures, thereby facilitating the resolution of Conjecture \ref{conj_*connect}. As shown in the following conjecture, we expect that as the required minimum distance increases, the least positive value of harmonic measures transitions from exponential to stretched exponential, and ultimately to polynomial.

\begin{conjecture}\label{conj_3}
For any $d\ge 3$, and integers $n\ge 2$ and $m\ge 3$, define
\begin{equation}
	\mathfs{M}_n^m(\mathbb{Z}^d):= \inf_{A\subset \mathbb{Z}^d,y\in A:\forall x_1,x_2\in A, \|x_1-x_2\|\ge m} \mathbb{H}_A(y),
\end{equation}
where $\|x_1-x_2\|$ is the graph distance between $x_1$ and $x_2$. Then we have  

	\begin{enumerate}
		\item There exist $C(d,m),c(d,m)>0$ such that for all $n\ge 2$,
		\begin{equation}
		 e^{-C n^{1/d}}	\le \mathfs{M}_n^m(\mathbb{Z}^d)\le e^{-c n^{1/d}}	. 
		\end{equation}

		\item When $m\ge f(n)$ (for some increasing function $f(n)$), there exist constants $C(d),c(d)>0$ such that for all $n\ge 2$, 
		\begin{equation}
	cn^{-1}	\le \mathfs{M}_n^m(\mathbb{Z}^d)\le Cn^{-1}.  
		\end{equation}

	\end{enumerate}
\end{conjecture}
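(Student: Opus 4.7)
\medskip
\textbf{Plan for Conjecture \ref{conj_3}.} I will address the two parts separately, as they require rather different techniques and have quite different levels of difficulty.

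Part (2) (large minimum separation) would be handled by a direct potential-theoretic calculation. The key input is that for $d \ge 3$ the Green's function satisfies $G(x) \le C |x|^{-(d-2)}$, so for any $v \in A$ with $|A| = n$ and pairwise distance $\ge m$, the expected number of returns of $\{S_k\}_{k\ge 1}$ to $A\setminus\{v\}$ starting at $v$ equals $\sum_{u \in A \setminus \{v\}} G(u - v) \le Cn/m^{d-2}$. By Markov's inequality this implies $\mathbb{P}_v(\tau_{A\setminus\{v\}} < \infty) \to 0$ uniformly in the geometry of $A$ as long as $n/m^{d-2} \to 0$, so $\mathrm{Es}_A(v) = \kappa_d\,(1 + o(1))$ uniformly, where $\kappa_d := \mathbb{P}_0(\tau_{\{0\}}^+ = \infty)$. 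Summing over $v$ and using the identity $\mathbb{H}_A(v) = \mathrm{Es}_A(v) / \mathrm{cap}(A)$ with $\mathrm{cap}(A) = \sum_u \mathrm{Es}_A(u)$ gives $\mathbb{H}_A(v) = (1+o(1))/n$, establishing both matching bounds. Thus Part (2) holds with any $f(n)$ such that $n/f(n)^{d-2} \to 0$, e.g.\ $f(n) = n^{1/(d-2) + \varepsilon}$ for any $\varepsilon > 0$.

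For Part (1) (fixed $m$), the conjectured rate $e^{-\Theta(n^{1/d})}$ reflects a mean-free-path picture: any $m$-separated set of $n$ points has diameter at least $c\,m\,n^{1/d}$ (volume packing), and a walk from an ``interior'' $y$ must traverse $\Theta(n^{1/d})$ essentially independent absorbing layers before reaching $y$. For the \emph{upper bound}, I would construct an explicit set: place $y$ at the origin and fill $B(y, C m n^{1/d})$ with $n-1$ vertices of a periodic packing at spacing exactly $m$. Iterated one-layer hitting estimates (each layer contributing a survival factor at most $1 - c(d,m)$) should give $\mathrm{Es}_A(y) \le e^{-c n^{1/d}}$, and dividing by $\mathrm{cap}(A) \le Cn$ yields the bound. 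For the \emph{lower bound}, I would attempt to refine the vertex-removal framework of Theorem \ref{coro1} by grouping removals along dyadic distance shells $A \cap \bigl(B(y,2^{k+1}) \setminus B(y,2^k)\bigr)$ around $y$, aiming to show that the cumulative log-price contributed by each shell is bounded by a constant, so that the total log-price across $O(\log(m n^{1/d}))$ shells (plus lower-order corrections) adds up to $O(n^{1/d})$.

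\textbf{Main obstacle.} The hard part is the lower bound in Part (1). Even using Theorems \ref{thm1}--\ref{coro1} in the well-separated regime (where each vertex is its own $*$-connected component) one obtains only $\mathfs{M}_n^m(\mathbb{Z}^d) \ge e^{-Cn}$, losing a factor $\psi$ per vertex. To upgrade the exponent to $n^{1/d}$, one needs a \emph{quantitative, distance-decaying} bound on the removal price: heuristically, removing a vertex $z$ at distance $r$ from $y$ in a well-separated configuration should inflate $\mathbb{H}_A(y)$ by a factor $1 + O(r^{-(d-2)}) \cdot \mathbb{H}_A(z)/\mathbb{H}_A(y)$. Making this rigorous seems to require a Beurling-type estimate applied to harmonic measure \emph{ratios} rather than absolute values, a tool which (to our knowledge) is not directly available in the existing literature and which we view as the principal technical barrier to a complete resolution of Conjecture \ref{conj_3}.
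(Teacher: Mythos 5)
This statement is a \emph{conjecture} in the paper (Conjecture~\ref{conj_3}), offered as a direction for future work; the paper supplies no proof of it, only a brief heuristic paragraph after the statement. There is therefore no ``paper's own proof'' to compare against, and your submission should be read as a research plan rather than a verification. With that caveat, here is an assessment.

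Your treatment of Part~(2) is a self-contained and plausibly complete argument, and it is consistent with the toolkit the paper itself uses in Section~\ref{section_3d_decay} (escape probability, capacity, and the identity $\mathbb{H}_A(v)=\mathrm{Es}_A(v)/\mathrm{cap}(A)$ from~\eqref{2.14}). The crude bound $\sum_{u\in A\setminus\{v\}}G(u-v)\le Cn\,m^{-(d-2)}$ is correct (each of the $n-1$ terms is at most $Cm^{-(d-2)}$), and together with the first-moment bound it gives $\mathrm{Es}_A(v)=\kappa_d\bigl(1+o(1)\bigr)$ uniformly and hence $\mathbb{H}_A(v)=(1+o(1))/n$, which settles Part~(2) for the choice $f(n)=n^{1/(d-2)+\varepsilon}$. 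Two small points worth tightening: a volume-packing refinement (at most $Ck^{d-1}$ points in the shell of radius $\asymp km$ around $v$) improves the Green's-function sum to $O\bigl(n^{2/d}m^{-(d-2)}\bigr)$, allowing a slightly slower-growing $f$; and the $o(1)$ is asymptotic, so the uniform two-sided bound for all $n\ge 2$ needs a trivial but explicit finite-$n$ check. Also note the paper's displayed definition of $\mathfs{M}_n^m$ omits the constraints $|A|=n$ and $x_1\neq x_2$, which are clearly intended; your argument correctly supplies them.

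For Part~(1) you give a reasonable packing heuristic and an explicit construction for the upper bound, but no complete argument in either direction, and you explicitly flag the lower bound as the open obstacle. I agree with your diagnosis: the single-vertex removal machinery of Theorems~\ref{thm1} and~\ref{coro1} only yields $e^{-Cn}$, and upgrading to $e^{-Cn^{1/d}}$ would require a quantitatively distance-decaying removal price (or some substitute, such as a multi-scale shell decomposition where the per-shell cumulative price is $O(1)$) that is not established here or in the paper. This is a genuine gap, but you have identified it honestly, and the conjecture itself is posed as unresolved. The net assessment: Part~(2) looks correct and would be a modest but real advance; Part~(1) remains open, in agreement with the paper's own framing.
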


Here are some explanations for Conjecture \ref{conj_3}. For Item (1), intuitively, for any $y\in \mathbb{Z}^d$, the vertices faraway from $y$ have less influence on the harmonic measure at $y$. Therefore, we expect that $\mathfs{M}_n^m(\mathbb{Z}^d)$ might be realized by arranging $A$ into a ball centering at some $y$ as densely as possible, for which the harmonic measure at $y$ is roughly $e^{-O (n^{1/d})}$. For Item (2), suppose that the $n$ vertices in $A$ are sufficiently spaced apart, then the harmonic measure of $A$ should approximate the uniform distribution on $A$, resulting in $\frac{1}{n}$ for the value at each vertex by symmetry. The interesting question is to identify an optimal estimate on the order of the function $f(n)$ in Item (2).

We conclude this section by providing an overview of the organization of this paper. In Section \ref{section_notation}, we present necessary notations, and review useful results on graph theory and random walks. In Section \ref{section_psi_finite}, we demonstrate that $\psi(\mathbb{Z}^2)<\infty$. Subsequently, Section \ref{section_psi_infinite} contains the proof of $\psi(\mathbb{Z}^d)=\infty$ for $d\ge 3$. Lastly, we establish Theorem \ref{thm2} in Section \ref{section_3d_decay}.


	\section{Preliminaries}\label{section_notation}

\subsection{Notations for $\mathbb{Z}^d$}\label{subsection_Zd}

We first present some notations for the lattice $\mathbb{Z}^d$.

\begin{itemize}
	\item  \textbf{Origin}: We denote the origin of $\mathbb{Z}^d$ by $\bm{0}$. Recall the notation $\rho_{A,y}(z)$ in (\ref{def_rho}). When $y=\bm{0}$, we may omit the subscript ``$y$'' in $\rho_{A,y}(z)$ and denote $\rho_{A}(z):=\rho_{A,\bm{0}}(z)$.

	\item  \textbf{Equivalent definitions of $\psi$ and $\mathcal{M}_n$:}  Since $\mathbb{Z}^d$ is a vertex-transitive graph, the harmonic measure on $\mathbb{Z}^d$ is translation-invariant. I.e.,  
	\begin{equation*}\label{tran_inv}
	\ \ \ \ \ \ \	\mathbb{H}_A(y) = \mathbb{H}_{A-y}(\bm{0}), \ \ \forall A\subset \mathbb{Z}^d\ \text{and}\ y\in A,
	\end{equation*}
	where $A-y:= \{z-y: z\in A\}$. Therefore, $\psi(\mathbb{Z}^d)$ in Definition \ref{def1} can be equivalently defined as
	\begin{equation}
			\psi(\mathbb{Z}^d)= \sup_{A\in \mathcal{A}(\mathbb{Z}^d)} \min_{z\in A\setminus \{\bm{0}\}}  \rho_{A}(z),
	\end{equation}
	where $\mathcal{A}(\mathbb{Z}^d):= \{A\subset \mathbb{Z}^d:\mathbb{H}_A(\bm{0})>0,|A|\ge 2\}$. For the same reason, $\mathcal{M}_n(\mathbb{Z}^d)$ (recall (\ref{def_Mn})) can be equivalently written as 
	\begin{equation}
		\mathcal{M}_n(\mathbb{Z}^d)=\inf_{A\in \mathcal{A}(\mathbb{Z}^d):|A|=n} \mathbb{H}_A(\bm{0}).
	\end{equation}

	
	
	\item  \textbf{Edge set:} We denote the edge set of $\mathbb{Z}^d$ by $$\mathbb{L}^d:=\{\{x,y\}:x\ \text{and}\ y\ \text{are adjacent vertices in}\ \mathbb{Z}^d\}.$$

	\item \textbf{Ball and box:} Recall that we use $\|\cdot \|$ to represent the graph distance, which is equivalent to the $\ell^1$ distance in $\mathbb{Z}^d$. For any $x\in \mathbb{Z}^d$ and $R\ge 0$, we denote the $\ell^1$ (resp.  $\ell^2$) ball with center $x$ and radius $R$ by $\mathbf{B}_x(R):=\{y\in\mathbb{Z}^d: \|x-y\|\le R\}$ (resp. $B_x(R):=\{y\in\mathbb{Z}^d: |x-y|\le R\}$). Let $\Lambda_{x}(R):= \{y\in\mathbb{Z}^d: |x-y|_{\infty}\le R \}$ (where $|\cdot|_{\infty}$ is the $\ell^\infty$ norm) be the box with center $x$ and side length $2\lfloor R \rfloor$. Especially, when $x=\bm{0}$, we may omit the subscript and denote $\mathbf{B}(R):=\mathbf{B}_{\bm{0}}(R)$, $B(R):=B_{\bm{0}}(R)$ and $\Lambda(R):=\Lambda_{\bm{0}}(R)$.


	\item  \textbf{Path:} We denote the set of non-negative (resp. positive) integers by $\mathbb{N}^0$ (resp. $\mathbb{N}^+$). A path $\eta$ on $\mathbb{Z}^d$ is a sequence of vertices $(x_0,...,x_n)$ (where $n\in \mathbb{N}^0$) such that $x_i\sim x_{i+1}$ for all $0\le i\le n-1$. We denote the length of $\eta$ by $\boldsymbol{\mathrm{L}}(\eta):=n$. The range of $\eta$ is $\boldsymbol{\mathrm{R}}(\eta):=\{x_i\}_{i=0}^{n}$. We write $\eta(i):=x_i$ for $0\le i\le n$. For convenience, we also write $\eta(-1):=x_n$ for the last vertex of $\eta$. Note that $(x_0)$ is a path with length $0$ and range $\{x_0\}$ and in addition, its first and last vertex are both $x_0$.

	\item  \textbf{Concatenation:} For any paths $\eta_1,\eta_2$ such that $\eta_1(-1)=\eta_2(0)$, we denote the concatenation of $\eta_1$ and $\eta_2$ by $\eta_1\circ \eta_2:= \big(\eta_1(0),...,\eta_1(-1),\eta_2(1),...,\eta_2(-1)\big)$.

	\item  \textbf{Self-avoiding path:} We say $\eta$ is self-avoiding if $\eta(i)\neq \eta(j)$ for all $i\neq j$.

	\item  \textbf{Circuit:} A circuit is a path $\eta$ such that for any $0\le i<j\le \boldsymbol{\mathrm{L}}(\eta)$, 
	$$
	\eta(i)=\eta(j) \iff i=0\ \text{and}\ j=\boldsymbol{\mathrm{L}}(\eta).
	$$

	\item  \textbf{Graph distance:} For any $A_1,A_2,F\subset \mathbb{Z}^d$, the graph distance between $A_1$ and $A_2$ in $F$ is defined as $\boldsymbol{\mathrm{D}}^F(A_1,A_2):=\inf\{n\in \mathbb{N}^0:\exists\ \text{path}\ \eta\ \text{with}\ \eta(0)\in A_1,\eta(-1)\in A_2,\boldsymbol{\mathrm{R}}(\eta)\subset F\ \text{and}\ \boldsymbol{\mathrm{L}}(\eta)=n\}$ (recalling that $\inf\emptyset=\infty$). Moreover, we say $A_1$ and $A_2$ are connected by $F$ if $\boldsymbol{\mathrm{D}}^F(A_1,A_2)<\infty$. When $F=\mathbb{Z}^d$, we may omit the superscript and denote $\boldsymbol{\mathrm{D}}(A_1,A_2):=\boldsymbol{\mathrm{D}}^{\mathbb{Z}^d}(A_1,A_2)$. When $A_i=\{x\}$ for some $i\in \{1,2\}$ and $x\in \mathbb{Z}^d$, we may omit the braces.

	\item  \textbf{Conectivity:} For any $A,F\subset \mathbb{Z}^d$, we say that $A$ is connected in $F$ if $\boldsymbol{\mathrm{D}}^{A\cap F}(x_1,x_2)<\infty$ for all $x_1,x_2\in A$. When $F=\mathbb{Z}^d$, we may omit the description ``in $F$''. For convenience, we define $\emptyset$ to be connected in any subset of $\mathbb{Z}^d$ (including $\emptyset$).

	\item  \textbf{Graph diameter:} For any connected $A$, the graph diameter of $A$ is defined as  $\mathrm{diam}(A):=\max_{x_1,x_2\in A}\boldsymbol{\mathrm{D}}^A(x_1,x_2)$.

	\item  \textbf{Connected component (cluster):} For any non-empty $A'\subset A\subset \mathbb{Z}^d$, we say $A'$ is a connected component of $A$ if $A'$ is connected and $\boldsymbol{\mathrm{D}}^A(x,A')=\infty$ for all $x\in A\setminus A'$. We also use ``cluster'' as a synonym of ``connected component''.

	\item  \textbf{Two specific clusters:} For any finite $A\subset \mathbb{Z}^d$, we denote by $A^c:=\mathbb{Z}^d \setminus A$ the complement of $A$. Note that $A^c$ contains exactly one infinite cluster, which we denote by $A^c_{\infty}$. Moreover, for any sufficiently large $R>0$ with $A\subset \Lambda(R)$, one has $[\Lambda(R)]^c\subset A^c_{\infty}$. We also denote by $A^c_{\bm{0}}$ the cluster of $A^c\cup \{\bm{0}\}$ that contains $\bm{0}$.

	\item  \textbf{Neighborhood:} For any $x\in \mathbb{Z}^d$, we denote the neighborhood of $x$ by $N(x):=\{y\in \mathbb{Z}^d:y\sim x\}$.  For any $y\in A$, let $N^A(y):=N(y)\cap  A^c$ be the neighborhood of $y$ outside $A$. Then we define the exterior (resp. $\bm{0}$-exposed) neighborhood of $y$ outside $A$ as $N^A_{\infty}(y):=N^A(y)\cap A^c_{\infty}$ (resp. $N^A_{\bm{0}}(y):=N^A(y)\cap A^c_{\bm{0}}$).

	\item  \textbf{Boundary:} For any finite $A\subset \mathbb{Z}^d$, the outer (resp. inner) vertex boundary of $A$ is defined as $\partial^{\mathrm{o}} A:= \cup_{y\in A} N^A(y)$ (resp. $\partial^{\mathrm{i}} A:=\{y\in A:N^A(y)\neq \emptyset\}$). We define the exterior (resp. $\bm{0}$-exposed) outer vertex boundary of $A$ by $\partial_{\infty}^{\mathrm{o}} A:= \cup_{y\in A} N^A_\infty(y)$ (resp. $\partial_{\bm{0}}^{\mathrm{o}} A:= \cup_{y\in A} N^A_{\bm{0}}(y)$). Moreover, the exterior (resp. $\bm{0}$-exposed) inner vertex boundary is defined as $\partial_\infty^{\mathrm{i}} A:=\{y\in A:N_\infty^A(y)\neq \emptyset\}$ (resp. $\partial_{\bm{0}}^{\mathrm{i}} A:=\{y\in A:N_{\bm{0}}^A(y)\neq \emptyset\}$).

	\item  \textbf{$*$-Adjacency:} For any $x,y\in \mathbb{Z}^d$, we say $x$ and $y$ are $*$-adjacent (denoted by $x\sim_* y$) if $|x-y|_{\infty}=1$. By replacing ``$\sim$'' with ``$\sim_*$'' in the concepts presented above, we obtain the definitions of $*$-path, $*$-circuit, $*$-graph distance ($\boldsymbol{\mathrm{D}}^F_{*}$, $\boldsymbol{\mathrm{D}}_{*}$), $*$-connected sets, $*$-graph diameter ($\mathrm{diam}_*$), $*$-cluster, $*$-neighborhoods ($N_*$, $N^A_*$, $N^A_{\infty,*}$ and $N^A_{\bm{0},*}$) and $*$-boundaries ($\partial^{\mathrm{o}}_*$, $\partial^{\mathrm{i}}_*$, $\partial^{\mathrm{o}}_{\infty,*}$, $\partial^{\mathrm{o}}_{\bm{0},*}$, $\partial^{\mathrm{i}}_{\infty,*}$ and $\partial^{\mathrm{i}}_{\bm{0},*}$).

	\item  \textbf{Embedding into $\mathbb{R}^d$:} We denote the canonical embedding from $\mathbb{Z}^d$ to $\mathbb{R}^d$ by $\boldsymbol{\mathrm{I}}(\cdot)$. For any edge $e=\{x,y\}\in \mathbb{L}^d$, we define the image of $e$ under $\boldsymbol{\mathrm{I}}$ (denoted by $\boldsymbol{\mathrm{I}}(e)$) as the line segment on $\mathbb{R}^d$ with endpoints $\boldsymbol{\mathrm{I}}(x)$ and $\boldsymbol{\mathrm{I}}(y)$. For any path $\eta=(x_0,...,x_n)$, the image of $\eta$ under $\boldsymbol{\mathrm{I}}$ (denoted by $\boldsymbol{\mathrm{I}}(\eta)$) is defined as follows: when $n=0$, $\boldsymbol{\mathrm{I}}(\eta):=\boldsymbol{\mathrm{I}}(x_0)$; when $n\ge 1$, $\boldsymbol{\mathrm{I}}(\eta):=\cup_{i=0}^{n-1}\boldsymbol{\mathrm{I}}(\{x_i,x_{i+1}\})$.

	\item  \textbf{Face:} In the case of $\mathbb{Z}^2$, $\mathbb{R}^2$ is divided by $\cup_{e\in \mathbb{L}^2}\boldsymbol{\mathrm{I}}(e)$ into connected regions, each of which is an open unit square and is called a face. For each face $\mathcal{S}$, we denote the collection of edges (resp. vertices) surrounding $\mathcal{S}$ by $\boldsymbol{\mathrm{e}}(\mathcal{S})$ (resp. $\boldsymbol{\mathrm{v}}(\mathcal{S})$). We say two faces $\mathcal{S}_1$ and $\mathcal{S}_2$ are adjacent (denoted by $\mathcal{S}_1\sim \mathcal{S}_2$) if $\boldsymbol{\mathrm{e}}(S_1)\cap \boldsymbol{\mathrm{e}}(S_2)\neq \emptyset$.

	\item  \textbf{Connectivity of faces:} We say a collection $\mathbb{S}$ of faces is connected if for any $\mathcal{S},\mathcal{S}'\in \mathbb{S}$, there exists a sequence of faces $\mathcal{S}_1,...,\mathcal{S}_k$ in $\mathbb{S}$ such that $\mathcal{S}_1=\mathcal{S}$, $\mathcal{S}_k=\mathcal{S}'$, and $\mathcal{S}_i\sim \mathcal{S}_{i+1}$ for all $1\le i \le k-1$.

\end{itemize}

\subsection{Connectivity of boundaries:} 
We cite the following lemma concerning the connectivity of different types of boundaries. 

\begin{lemma}\label{lemma_connectivity}
	For any finite, $*$-connected $D\subset \mathbb{Z}^d$, 
	\begin{enumerate}
		
		\item (\cite[Lemma 2.1]{deuschel1996surface}) $\partial_{\infty}^{\mathrm{o}} D$ is $*$-connected; 
		
		\item (\cite[Lemma 2.23]{kesten1984aspects}) $\partial_{\infty,*}^{\mathrm{o}} D $ is connected.  
	\end{enumerate}
\end{lemma}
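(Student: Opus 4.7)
Since Lemma~\ref{lemma_connectivity} is cited verbatim from \cite{deuschel1996surface} and \cite{kesten1984aspects}, my goal is to sketch the strategy rather than redo the full argument. The plan is to treat both items by one common path-surgery technique, exploiting the discrete duality between $\sim$-connectivity and $\sim_*$-connectivity in $\mathbb{Z}^d$: a regular-type connectivity assumption on $D$ forces $*$-connectivity of its regular boundary, and vice versa.

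For item~(1), I would fix $u, v \in \partial^{\mathrm{o}}_{\infty} D$. By definition both lie in the infinite $\sim$-component $D^c_{\infty}$, so there is a $\sim$-path $\gamma = (u = w_0, w_1, \ldots, w_k = v)$ inside $D^c_{\infty}$. For each $w_j$ I would define a ``shadow'' $\widetilde{w}_j \in \partial^{\mathrm{o}}_{\infty} D$ by following a shortest $\sim$-path inside $D^c_{\infty}$ from $w_j$ toward $D$ and taking its last vertex before hitting $D$. The goal is then to show that $\widetilde{w}_j \sim_* \widetilde{w}_{j+1}$ for every $j$, which would immediately yield a $*$-path in $\partial^{\mathrm{o}}_{\infty} D$ from $u$ to $v$. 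The verification of this is a local analysis inside the $3\times\cdots\times 3$ window around $\{w_j, w_{j+1}\}$, using the $*$-connectedness of $D$ to rule out configurations in which the two shadows would lie in distinct $*$-components of the boundary.

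Item~(2) is formally dual: one replaces every $\sim$ by $\sim_*$ in the above, finds a $*$-path in the $*$-infinite component of $D^c$, retracts along $*$-geodesics, and checks that consecutive shadows are $\sim$-adjacent. The main obstacle in both items is precisely this local adjacency step — it is the technical heart of the arguments in \cite{deuschel1996surface,kesten1984aspects}, while the rest of the reduction is essentially bookkeeping. A perhaps cleaner alternative route is induction on $|D|$: remove a non-cut $*$-vertex $z_0$ of $D$ (which always exists for a finite $*$-connected set, e.g.\ any $*$-leaf of a $*$-spanning tree of $D$), apply the induction hypothesis to $D \setminus \{z_0\}$, and then patch the boundary via a local analysis near $z_0$, showing that restoring $z_0$ only deletes or adds a $*$-connected (resp.\ connected) piece of the boundary attached to the existing one.
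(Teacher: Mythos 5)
The paper does not prove this lemma; both items are cited directly from \cite{deuschel1996surface} and \cite{kesten1984aspects}, so there is no in-paper proof to compare your sketch against. I will therefore comment on the sketch itself.

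Your main ``shadow'' step contains a genuine gap. You define $\widetilde{w}_j$ as the last vertex before $D$ on a \emph{shortest} $\sim$-path in $D^c_{\infty}$ from $w_j$ to $D$, and you claim the adjacency $\widetilde{w}_j \sim_* \widetilde{w}_{j+1}$ can be verified by a local analysis in the $3\times\cdots\times 3$ window around $\{w_j,w_{j+1}\}$. But if $w_j$ is at distance $k\gg 1$ from $D$, the shadow $\widetilde{w}_j$ lies $k-1$ steps away from $w_j$, so the two shadows need not lie anywhere near $w_j,w_{j+1}$; shortest paths from neighbouring starting points can diverge and terminate on far-apart pieces of $\partial^{\mathrm{o}}_{\infty}D$. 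The claimed local verification therefore says nothing about the shadows, and the adjacency $\widetilde{w}_j \sim_* \widetilde{w}_{j+1}$ is simply false in general. To make a retraction idea work one needs a mechanism that deforms the whole path $\gamma$ towards $D$ coherently, or an argument (as in \cite{deuschel1996surface,kesten1984aspects}) of a topological/combinatorial nature that does not go vertex-by-vertex. The same objection applies verbatim to your dual treatment of item (2).

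Your proposed ``cleaner'' induction also glosses over the real difficulty. When you restore a $*$-leaf $z_0$ to $D'=D\setminus\{z_0\}$, the infinite component of the complement can change: $z_0$ may seal off a finite pocket, removing a possibly large, non-local subset of $\partial^{\mathrm{o}}_{\infty}D'$ from $\partial^{\mathrm{o}}_{\infty}D$ (in addition to removing $z_0$ itself and adding $N(z_0)\cap D^c_{\infty}$). Showing that what remains is still $*$-connected (resp.\ connected for item (2)) after such a global surgery is exactly the hard part, not a routine local patch near $z_0$. So neither route you outline is actually ``essentially bookkeeping''; both would require a substantially new idea at the step you flag as local and easy.
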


In the case of $\mathbb{Z}^2$, the next lemma provides slightly stronger connectivity for $\partial_{\infty}^{\mathrm{o}} D$ (compared to Item (1) in Lemma \ref{lemma_connectivity}) under certain additional conditions.

\begin{lemma}\label{lemma_connectivity_new}
	For any finite, connected $D\subset \mathbb{Z}^2$, denote $\overline{D}:= D\cup \partial^{\mathrm{o}}_{\infty} D$. If $\partial_{\infty}^{\mathrm{i}} \overline{D} = \partial^{\mathrm{o}}_{\infty} D$, then $(\partial^{\mathrm{o}}_{\infty} D)\setminus \{z\}$ is $*$-connected for all $z \in \partial^{\mathrm{o}}_{\infty} D$.
\end{lemma}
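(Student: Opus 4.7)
The plan is to reduce the $*$-connectivity statement to a geometric fact about the outer boundary of the planar polyomino $U := \bigcup_{v \in \overline{D}} Q(v) \subset \mathbb{R}^2$, where $Q(v)$ denotes the closed unit square centred at $\boldsymbol{\mathrm{I}}(v)$. Since $D$ is connected (hence so is $\overline{D}$), the polyomino $U$ is connected in $\mathbb{R}^2$, and the boundary $\gamma$ of the unbounded component of $\mathbb{R}^2 \setminus U$ is a simple closed polygonal curve. By construction, a unit side of $Q(v)$ (for $v \in \overline{D}$) lies on $\gamma$ precisely when the corresponding edge-neighbour of $v$ belongs to $\overline{D}^c_\infty$. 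The hypothesis $\partial^{\mathrm{i}}_{\infty}\overline{D} = \partial^{\mathrm{o}}_{\infty} D$ then yields two immediate facts: no vertex $v \in D$ contributes a side to $\gamma$ (otherwise such $v$ would lie in $\partial^{\mathrm{i}}_{\infty}\overline{D} \setminus \partial^{\mathrm{o}}_{\infty} D$), and every vertex $v \in \partial^{\mathrm{o}}_{\infty} D$ contributes at least one side.

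The key geometric claim is that, for each $v \in \partial^{\mathrm{o}}_{\infty} D$, the sides of $Q(v)$ lying on $\gamma$ form a single contiguous arc of $\partial Q(v)$. The only possible obstruction is the \emph{pinch} configuration in which two opposite edge-neighbours of $v$ (say $v + (0,1)$ and $v + (0,-1)$) lie in $\overline{D}^c_\infty$ while the other two ($v + (1,0)$ and $v + (-1,0)$) lie in $\overline{D}$. I would rule out the pinch by a discrete Jordan-curve argument: using the connectedness of $D$, pick a simple path $\pi$ inside $\overline{D}$ joining $v + (1,0)$ to $v + (-1,0)$ while avoiding $v$ (if one of these two neighbours lies in $\partial^{\mathrm{o}}_{\infty} D$ rather than in $D$, prepend or append the step from that neighbour to an adjacent $D$-vertex and then use connectedness of $D$). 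The closed circuit $\Gamma := \pi \circ \bigl(v + (-1,0),\, v,\, v + (1,0)\bigr)$ has range contained in $\overline{D}$, and by the discrete Jordan curve theorem it splits $\mathbb{Z}^2 \setminus \boldsymbol{\mathrm{R}}(\Gamma)$ into a bounded inside and an unbounded outside. Locally at $v$ the vertices $v + (0,1)$ and $v + (0,-1)$ sit on opposite sides of the horizontal segment of $\Gamma$ through $v$, so exactly one of them lies in the bounded inside; since $\boldsymbol{\mathrm{R}}(\Gamma) \subset \overline{D}$, that trapped vertex must sit in a finite cluster of $\overline{D}^c$, contradicting its assumed membership in $\overline{D}^c_\infty$.

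Granted the contiguity claim, traversing $\gamma$ once visits the squares $Q(v_1), Q(v_2), \dots, Q(v_m)$ with $v_i \in \partial^{\mathrm{o}}_{\infty} D$ in a cyclic order in which each vertex appears exactly once, and consecutive $v_i, v_{i+1}$ share a lattice corner of their squares, hence satisfy $v_i \sim_* v_{i+1}$. Thus the $*$-adjacency graph on $\partial^{\mathrm{o}}_{\infty} D$ contains a Hamiltonian cycle, so removing any single vertex $z$ leaves a $*$-path through all remaining vertices, which proves that $(\partial^{\mathrm{o}}_{\infty} D) \setminus \{z\}$ is $*$-connected.

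The main obstacle is the contiguity step in the second paragraph: one must execute the Jordan-curve argument uniformly across the sub-cases obtained by distributing the four edge-neighbours of $v$ among $D$, $\partial^{\mathrm{o}}_{\infty} D$ and $\overline{D}^c_\infty$, verifying in each case that the closing path $\pi$ remains inside $\overline{D}$ and avoids both $v + (0,1)$ and $v + (0,-1)$. A minor subtlety is that one should extract a simple sub-path of $\pi$ before applying the discrete Jordan curve theorem, which is always possible by erasing loops, ensuring that $\Gamma$ is a genuine simple circuit.
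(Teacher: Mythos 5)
Your strategy parallels the paper's: both proofs reduce the claim to exhibiting a cyclic $*$-ordering of $\partial^{\mathrm{o}}_\infty D$ in which each vertex appears exactly once, after which deleting any single vertex leaves a $*$-path through the rest. The paper obtains this cycle by quoting \cite[Corollary 2.2]{kesten1982percolation} for a $*$-circuit $\eta_{*}$ in $\partial^{\mathrm{o}}_\infty D$ through $z$ surrounding $D$, and then uses the discrete Jordan curve theorem together with the hypothesis to force $\boldsymbol{\mathrm{R}}(\eta_{*}) = \partial^{\mathrm{o}}_\infty D$. You instead construct the cycle directly by tracing the outer boundary $\gamma$ of the polyomino; your two preliminary observations about which squares contribute sides to $\gamma$ are both correct consequences of the hypothesis.

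The gap is in the contiguity step. You assert that the only non-contiguous configuration at $Q(v)$ is the pinch with $v+(0,1), v+(0,-1) \in \overline{D}^c_\infty$ and $v+(1,0), v+(-1,0) \in \overline{D}$, and your Jordan curve construction requires both $v+(1,0)$ and $v+(-1,0)$ to lie in $\overline{D}$ so that the closing path $\pi$ can reach them. But a side of $Q(v)$ failing to lie on $\gamma$ only tells you the corresponding neighbour is not in $\overline{D}^c_\infty$; it could still lie in a \emph{finite} cluster of $\overline{D}^c$ (a pocket enclosed inside $\partial^{\mathrm{o}}_\infty D$ that is not a hole of $D$). If, say, $v+(-1,0)$ is such a pocket vertex, then $v$ has $v+(1,0)$ as its \emph{only} edge-neighbour in $\overline{D}$, so there is no lattice circuit in $\overline{D}$ through $v$ at all, and your $\Gamma$ cannot be formed. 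This configuration can in fact be excluded, but not by the argument you give: one should first observe that the diagonal neighbour $v+(-1,1)$ is then forced into $\partial^{\mathrm{o}}_\infty D$ (being edge-adjacent to both $v+(0,1)$ and $v+(-1,0)$, which lie in distinct clusters of $\overline{D}^c$, it cannot itself lie in $\overline{D}^c$; and it cannot lie in $D$, else it would belong to $\partial^{\mathrm{i}}_\infty\overline{D}\cap D$, contradicting the hypothesis), and then close a $*$-circuit through $v+(1,0),\,v,\,v+(-1,1)$ inside $\overline{D}$, invoking the discrete Jordan theorem for $*$-circuits to trap one of $v+(0,\pm1)$. As written, this case is unaddressed, which is a genuine hole in the proof; the paper's route via Kesten's $*$-circuit sidesteps the case analysis entirely.
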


\begin{proof}
By \cite[Corollary 2.2]{kesten1982percolation}, there exists a $*$-circuit $\eta_{*}$ with $\eta_{*}(0)=z$ and $\boldsymbol{\mathrm{R}}(\eta_{*})\subset \partial^{\mathrm{o}}_{\infty} D$ such that $D$ is contained in the interior of $\boldsymbol{\mathrm{R}}(\eta_{*})$ (i.e. the union of all finite clusters of $[\boldsymbol{\mathrm{R}}(\eta_{*})]^c$). In what follows, we prove $\boldsymbol{\mathrm{R}}(\eta_{*})=  \partial^{\mathrm{o}}_{\infty} D$ by contradiction. Assume that $w\in (\partial^{\mathrm{o}}_{\infty} D)\setminus \boldsymbol{\mathrm{R}}(\eta_{*})$. On the one hand, when $w$ is in the interior of $\boldsymbol{\mathrm{R}}(\eta_{*})$, any path from $w$ to infinity must intersect $\boldsymbol{\mathrm{R}}(\eta_{*})\subset (\partial^{\mathrm{o}}_{\infty} D)\setminus \{w\}$, which implies $w\notin \partial_{\infty}^{\mathrm{i}} \overline{D} $ and in turn causes contradiction with the assumption $\partial_{\infty}^{\mathrm{i}} \overline{D} = \partial^{\mathrm{o}}_{\infty} D$. On the other hand, when $w\in [\boldsymbol{\mathrm{R}}(\eta_{*})]^c_{\infty}$, since $D$ is contained in the interior of $\boldsymbol{\mathrm{R}}(\eta_{*})$, we know that any path from $w$ to $D$ must intersect $\boldsymbol{\mathrm{R}}(\eta_{*})$ and therefore has a length of at least $2$. However, this is incompatible with the fact that $\boldsymbol{\mathrm{D}}(w,D)=1$ (since $w\in \partial^{\mathrm{o}}_{\infty}D$). To sum up, we obtain $\boldsymbol{\mathrm{R}}(\eta_{*})=  \partial^{\mathrm{o}}_{\infty} D$. Consequently, each two vertices in $(\partial^{\mathrm{o}}_{\infty} D)\setminus \{z\}$ are connected by some sub-path of $\eta_{*}$ that does not intersect $z$ (recall that $\eta_*$ is a $*$-circuit). Now we conclude this lemma.
\end{proof}

\subsection{Statements about constants}

We use notations $C,C',c,c',...$ for the local constants whose values change according to the context. Additionally, we employ numbered notations $C_1, C_2, c_1, c_2,...$ to denote global constants. Unlike local constants, these global constants remain fixed throughout the paper. For clarity, we often assign the uppercase letter $C$ (possibly with subscripts or superscripts) to represent large constants, while the lowercase letter $c$ denotes small ones.

\subsection{Random walk}

	Recall that $\mathbb{P}_x$ for $x\in \mathbb{Z}^d$ is the law of the random walk $\{S_n\}_{n\ge 0}$ on $\mathbb{Z}^d$ starting from $x$. We denote the expectation under $\mathbb{P}_x$ by $\mathbb{E}_x$. Also recall that for any non-empty $A\subset \mathbb{Z}^d$, we denote $\tau_A:= \inf\{n\ge 0:S_n\in A\}$ and $\tau_A^+:=\inf\{n\ge 1:S_n\in A\}$. Especially, when $A=\{y\}$ for some $y\in \mathbb{Z}^d$, we may omit the braces.

	\textbf{Green's function:} For $d\ge 3$, the Green's function on $\mathbb{Z}^d$ is defined as 
	\begin{equation}\label{def_green}
		G(x,y):= \mathbb{E}_x\Big(\sum_{i=0}^{\infty} \mathbbm{1}_{S_i=y}\Big), \ \forall x,y\in \mathbb{Z}^d.   
	\end{equation}
	Note that for $\mathbb{Z}^2$, this definition is not valid since the expectation on the RHS of (\ref{def_green}) is infinity. However, for all $d\ge 2$, we may always define Green's function for a non-empty set $A\subset \mathbb{Z}^d$ by 
	\begin{equation}
		G_A(x,y):= \mathbb{E}_x\Big(\sum_{i=0}^{\tau_A} \mathbbm{1}_{S_i=y}\Big), \ \forall x,y\in A^c.   
	\end{equation}
	Next, we cite some basic formulas for $G_A(x,y)$.

\begin{lemma}[{\cite[Lemma 4.6.1 and Proposition 4.6.2]{lawler2010random}}]\label{lemma2.1}
	For any non-empty $A\subset \mathbb{Z}^d$ and $x,y\in A^c$, we have 
	\begin{equation}\label{2.2}
		G_A(x,y)= G_A(y,x), 
	\end{equation}
	\begin{equation}\label{2.3}
		G_A(x,x) = \left[ \mathbb{P}_x\left(\tau_A<\tau^+_x\right) \right]^{-1},  
	\end{equation}
	\begin{equation}\label{2.4}
		G_A(x,y)=  \mathbb{P}_x\left(\tau_y<\tau_A \right) G_A(y,y). 
	\end{equation}
In addition, for any non-empty $A'\subset A$, we have 
 \begin{equation}\label{2.5}
	G_{A'}(x,y)=G_{A}(x,y) + \sum_{w\in A\setminus A'} \mathbb{P}_x\left(\tau_{A}= \tau_w<\infty \right)G_{A'}(w,y). 
\end{equation}
\end{lemma}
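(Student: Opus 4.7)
All four identities are standard consequences of reversibility of the simple random walk combined with the strong Markov property; I would treat them in the order (2.2), (2.3), (2.4), (2.5), since each subsequent identity is most cleanly proved with the previous ones in hand. Throughout I use the ``transition kernel killed at $A$'': for $x,y\in A^c$ and $n\ge 0$, set $p_n^A(x,y):=\mathbb{P}_x(S_n=y,\ \tau_A>n)$, so that $G_A(x,y)=\sum_{n\ge 0}p_n^A(x,y)$.

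For (2.2), I would observe that a path $(x=x_0,x_1,\dots,x_n=y)$ staying in $A^c$ and its time reversal $(y=x_n,x_{n-1},\dots,x_0=x)$ have equal probability under simple random walk on $\mathbb{Z}^d$, because every step has weight $(2d)^{-1}$ and the vertex degree is constant. Summing this identity over all such paths gives $p_n^A(x,y)=p_n^A(y,x)$, and summing over $n$ yields (2.2). For (2.3), the idea is to count visits to $x$ one at a time: let $p:=\mathbb{P}_x(\tau_A<\tau_x^+)$. By the strong Markov property applied at successive returns to $x$, the number of visits to $x$ before $\tau_A$ is a geometric random variable with success probability $p$, so its expectation $G_A(x,x)$ equals $p^{-1}$.

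For (2.4), I would split the sum defining $G_A(x,y)$ according to whether the walk has already hit $y$. Since $y\notin A$, any contributing time $n$ satisfies $\tau_y\le n<\tau_A$, so
\begin{equation*}
G_A(x,y)=\mathbb{E}_x\Big[\mathbbm{1}_{\tau_y<\tau_A}\sum_{n\ge \tau_y}\mathbbm{1}_{S_n=y,\ n<\tau_A}\Big].
\end{equation*}
Applying the strong Markov property at the stopping time $\tau_y$, the inner conditional expectation equals $G_A(y,y)$, which gives (2.4). For (2.5), I would use the containment $A'\subset A$, which forces $\tau_A\le \tau_{A'}$, and split the visits to $y$ before $\tau_{A'}$ into those occurring before $\tau_A$ and those occurring in $[\tau_A,\tau_{A'})$:
\begin{equation*}
G_{A'}(x,y)=\mathbb{E}_x\Big[\sum_{n=0}^{\tau_A}\mathbbm{1}_{S_n=y}\Big]+\mathbb{E}_x\Big[\mathbbm{1}_{\tau_A<\tau_{A'}}\sum_{n=\tau_A}^{\tau_{A'}}\mathbbm{1}_{S_n=y}\Big].
\end{equation*}
The first term is $G_A(x,y)$ (no contribution from $n=\tau_A$ since $y\in (A')^c\subset A^c$ together with the case $\tau_A=\infty$ being handled by convention). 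For the second, on the event $\tau_A<\tau_{A'}$ we have $S_{\tau_A}\in A\setminus A'$, so decomposing by the value $w=S_{\tau_A}$ and applying the strong Markov property at $\tau_A$ produces $\sum_{w\in A\setminus A'}\mathbb{P}_x(\tau_A=\tau_w<\infty)\,G_{A'}(w,y)$.

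The only genuine subtlety is the treatment of the boundary term $n=\tau_A$ in the overlapping decompositions, together with the event $\{\tau_A=\infty\}$; because $y\in A^c$ the indicator $\mathbbm{1}_{S_{\tau_A}=y}$ vanishes on $\{\tau_A<\infty\}$, and the $\tau_A=\infty$ contribution either cancels identically or is absent depending on transience, so no ambiguity arises. I expect the hardest bookkeeping point to be (2.5), since one must verify that the indices in the two sums do not double count $n=\tau_A$; this is purely combinatorial and follows from $y\notin A$.
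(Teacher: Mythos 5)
The paper does not prove Lemma~\ref{lemma2.1}; it simply cites \cite[Lemma 4.6.1 and Proposition 4.6.2]{lawler2010random}. Your argument is a correct and standard derivation that essentially reproduces the cited proofs: (\ref{2.2}) by path reversal using the symmetric, degree-constant step distribution; (\ref{2.3}) by the geometric count of excursions from $x$; (\ref{2.4}) by the strong Markov property at $\tau_y$; and (\ref{2.5}) by splitting the visit count at $\tau_A$ and applying the strong Markov property there. You correctly identify and dispose of the only delicate point, the overlap at $n=\tau_A$, using $y\in A^c$ so that $\mathbbm{1}_{S_{\tau_A}=y}=0$ on $\{\tau_A<\infty\}$, and your identification of the event $\{\tau_A<\tau_{A'},\,S_{\tau_A}=w\}$ with $\{\tau_A=\tau_w<\infty\}$ for $w\in A\setminus A'$ is also correct since $w\in A$ forces $\tau_w\ge\tau_A$ and $w\notin A'$ forces $\tau_{A'}>\tau_A$.
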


We also need the following lemma, which is commonly referred to as ``the last-exit decomposition''. Its proof follows that of \cite[Proposition 4.6.4]{lawler2010random}. 
\begin{lemma}\label{lemma2.3}
	For any $A_1\subset A_2\subset \mathbb{Z}^d$, $y\in A_1$ and $z\in A_2\setminus A_1$, we have 
	\begin{equation}
		\mathbb{P}_{z}\left(\tau_{A_1}=\tau_y<\infty\right) = \sum_{v\in A_2\setminus A_1} G_{A_1}(z,v) \mathbb{P}_v\left( \tau_{A_2}^+= \tau_{y} <\infty\right).
	\end{equation}
\end{lemma}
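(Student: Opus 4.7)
The plan is to execute the classical last-exit decomposition, organizing trajectories from $z$ to $y$ by the final visit to $A_2\setminus A_1$ before the walk first enters $A_1$. I would begin by writing
$$
\mathbb{P}_{z}(\tau_{A_1}=\tau_y<\infty)=\sum_{n\ge 1}\mathbb{P}_z(\tau_{A_1}=n,\ S_n=y),
$$
and observing that on each summand event, $S_0=z\in A_2\setminus A_1$ and $S_n=y\in A_1$, so the set $\{0\le k<n:S_k\in A_2\setminus A_1\}$ is non-empty and finite; let $L$ denote its maximum and $v:=S_L$. Partitioning according to $L=k$ and $v$ produces
$$
\mathbb{P}_z(\tau_{A_1}=n,\ S_n=y)=\sum_{v\in A_2\setminus A_1}\sum_{k=0}^{n-1}\mathbb{P}_z\bigl(S_k=v,\ \tau_{A_1}>k,\ S_m\notin A_2\setminus A_1\ \forall\,k<m<n,\ S_n=y\bigr).
$$

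The next step is to invoke the Markov property at time $k$. The pre-$k$ factor is $\mathbb{P}_z(S_k=v,\ \tau_{A_1}>k)$, and the post-$k$ factor describes the shifted walk $(S_{k+j})_{j\ge 0}$ starting at $v$ that avoids $A_2$ at all times $1,\dots,n-k-1$ and hits it at $y$ at step $n-k$; this is precisely the event $\{\tau_{A_2}^+=\tau_y=n-k\}$. Summing over $n\ge k+1$ collapses the second factor into $\mathbb{P}_v(\tau_{A_2}^+=\tau_y<\infty)$, and a Fubini interchange (valid by non-negativity) rearranges everything into
$$
\sum_{v\in A_2\setminus A_1}\Bigl(\sum_{k\ge 0}\mathbb{P}_z(S_k=v,\ \tau_{A_1}>k)\Bigr)\mathbb{P}_v(\tau_{A_2}^+=\tau_y<\infty).
$$
Finally, since $v\notin A_1$ the event $\{S_k=v\}$ forces $k\ne\tau_{A_1}$, so the inner parenthesis equals $\mathbb{E}_z\sum_{k=0}^{\tau_{A_1}}\mathbbm{1}_{S_k=v}=G_{A_1}(z,v)$, which yields the claimed identity.

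There is no serious obstacle; this is a template last-exit calculation parallel to the proof of \cite[Proposition 4.6.4]{lawler2010random}. The only subtleties will be verifying that $L$ is well defined and strictly less than $n$ (which uses $z\in A_2\setminus A_1$, $y\in A_1$, and $A_1\cap(A_2\setminus A_1)=\emptyset$) and checking that the compact notation $\tau_{A_2}^+=\tau_y$ absorbs both the avoidance of $A_1$ and of $A_2\setminus A_1$ on the intermediate steps---both points are just matters of unwinding definitions.
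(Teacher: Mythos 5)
Your proof is correct and follows the same last-exit decomposition as the paper; the only presentational difference is that you condition on the deterministic hitting time $n$ and sum, whereas the paper works directly with the random time $\tau_{A_1}$. One cosmetic slip: in your second display the intermediate-step condition should read $S_m\notin A_2$ for $k<m<n$ (equivalently, you should retain $\tau_{A_1}=n$ rather than weakening it to $\tau_{A_1}>k$), but your surrounding text correctly identifies the post-$k$ factor as avoidance of all of $A_2$, so the argument itself is sound.
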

\begin{proof}
	For the random walk starting from $z\in A_2\setminus A_1$, let $L$ the be largest integer in $\left[ 0,\tau_{A_1}\right)$ such that $S_L\in A_2$. Then we have that $\mathbb{P}_{z}\left(\tau_{A_1}=\tau_y<\infty\right)$ equals to 
	\begin{equation*}\label{lasttime_1}
		\begin{split}
	& \sum_{v\in A_2\setminus A_1}\sum_{k=0}^{\infty}  \mathbb{P}_{z}\left(\tau_{A_1}=\tau_y<\infty, L=k,S_L=v \right)\\
		&=\sum_{v\in A_2\setminus A_1}\sum_{k=0}^{\infty} \mathbb{P}_{z}  \left(k<\tau_{A_1}<\infty,S_k=v, S_{\tau_{A_1}}=y,\forall k+1\le j<\tau_{A_1},S_j\notin A_2  \right). 
		\end{split}
	\end{equation*}
	By Markov property, the probability on the RHS can be written as 
	\begin{equation*}
		\begin{split}
			&\mathbb{P}_{z} \left(k<\tau_{A_1},S_k=v\right)\cdot \mathbb{P}_{v}\left(\tau_{A_1}<\infty, S_{\tau_{A_1}}=y,\forall 1\le j<\tau_{A_1},S_j\notin A_2 \right)\\
			&= \mathbb{P}_{z} \left(k<\tau_{A_1},S_k=v\right)\cdot \mathbb{P}_{v}\left(\tau_{A_2}^+=\tau_y<\infty \right).
		\end{split}
	\end{equation*}
	Combining these two equations, we conclude this lemma.
\end{proof}

We then prove the subsequent lemma, which will facilitate the comparison between
Green’s functions for random walks with different starting points.
\begin{lemma}\label{lemma_compare_green}
	For any $d\ge 2$ and positive numbers $\lambda_1<\lambda_2<\lambda_3$, there exist constants $C_3(\lambda_1,\lambda_2,\lambda_3,d),c_3(\lambda_1,\lambda_2,\lambda_3,d)>0$ such that for any $N>C_3$, $A\subset \mathbf{B}(\lambda_1 N)\cup [\mathbf{B}(\lambda_3N)]^c$, $x_1,x_2\in \partial^{\mathrm{i}}\mathbf{B}(\lambda_2N)$ and $x_3\in  \partial^{\mathrm{i}}\mathbf{B}(\lambda_3N)$, 
	\begin{equation}\label{2.7}
		G_A(x_1,x_3)\ge c_3\cdot G_A(x_2,x_3). 
	\end{equation}
\end{lemma}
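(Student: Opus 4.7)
The plan is to prove the estimate by a Harnack chaining argument along the middle sphere $\partial^{\mathrm{i}}\mathbf{B}(\lambda_2 N)$. The starting observation is that, for fixed $x_3$, the map $y\mapsto G_A(y,x_3)$ is non-negative and harmonic at every $y\in A^c\setminus\{x_3\}$; this is the defining property of $G_A$ as a Green function. Since $A\subset \mathbf{B}(\lambda_1 N)\cup [\mathbf{B}(\lambda_3 N)]^c$, the function $G_A(\,\cdot\,,x_3)$ is harmonic throughout the annular region $\mathbf{B}(\lambda_3 N)\setminus \mathbf{B}(\lambda_1 N)$, with the single exception of the point $x_3$ itself.

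Next I would fix a small scale $\varepsilon:=\tfrac{1}{8}\min(\lambda_2-\lambda_1,\lambda_3-\lambda_2)$ and build a chain $y_0=x_1,y_1,\dots,y_k=x_2$ of vertices of $\partial^{\mathrm{i}}\mathbf{B}(\lambda_2 N)$ with $\|y_i-y_{i+1}\|\le \varepsilon N$ and $k\le K$, where $K=K(\lambda_1,\lambda_2,\lambda_3,d)$ is a constant independent of $N$. Such a chain exists because the $\ell^1$-sphere of radius $\lambda_2 N$ has $\ell^1$-diameter $O(N)$ and can be traversed in $O(1/\varepsilon)$ jumps of size $\varepsilon N$, e.g. by moving along coordinate axes at nearly constant $\ell^1$-norm. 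The key geometric point is that for every $i$, the ball $\mathbf{B}_{y_i}(2\varepsilon N)$ is entirely contained in the open annulus $\mathbf{B}(\lambda_3 N-\varepsilon N)\setminus \mathbf{B}(\lambda_1 N+\varepsilon N)$ and avoids $x_3$, because
\begin{equation*}
\boldsymbol{\mathrm{D}}(y_i,x_3)\ge \lambda_3 N-\lambda_2 N\ge 8\varepsilon N>2\varepsilon N,
\end{equation*}
and similarly $y_i$ is at graph distance at least $(\lambda_2-\lambda_1)N\ge 8\varepsilon N$ from $\mathbf{B}(\lambda_1 N)$. Hence $G_A(\,\cdot\,,x_3)$ is non-negative and harmonic on the whole ball $\mathbf{B}_{y_i}(2\varepsilon N)$.

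At this point I would invoke the standard discrete Harnack inequality for $\mathbb{Z}^d$ (see e.g.\ Theorem 6.3.9 of Lawler--Limic, \emph{Random Walk: A Modern Introduction}): there exists a constant $C_H=C_H(d)>0$ such that any non-negative function harmonic on $\mathbf{B}_x(2R)$ satisfies $\max_{\mathbf{B}_x(R)}\le C_H\cdot\min_{\mathbf{B}_x(R)}$, provided $R$ exceeds some absolute constant, which is the case here once $N>C_3$ with $C_3$ large enough that $\varepsilon N\ge 1$. Since $y_i$ and $y_{i+1}$ both lie in $\mathbf{B}_{y_i}(\varepsilon N)$, Harnack gives $G_A(y_i,x_3)\le C_H\cdot G_A(y_{i+1},x_3)$. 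Chaining this inequality along the $k\le K$ steps yields
\begin{equation*}
G_A(x_2,x_3)\le C_H^{K}\cdot G_A(x_1,x_3),
\end{equation*}
which is the claim with $c_3:=C_H^{-K}$.

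The main (minor) obstacle is purely combinatorial: verifying that the chain $y_0,\dots,y_k$ of length $O(1)$ with $\|y_i-y_{i+1}\|\le\varepsilon N$ and all $y_i\in\partial^{\mathrm{i}}\mathbf{B}(\lambda_2 N)$ really exists on the lattice. This can be handled by first moving $x_1$ to a standard reference point $(\lfloor\lambda_2 N\rfloor,0,\dots,0)$ along the $\ell^1$-sphere in $K/2$ jumps (alternating moves of the form $\mathbf{e}_j\to\mathbf{e}_1$ that preserve $\|\cdot\|$) and similarly for $x_2$; the number of jumps is bounded by $2\lambda_2/\varepsilon$, a constant. Once this geometric lemma is in place, everything else is an immediate application of the probabilistic Harnack principle.
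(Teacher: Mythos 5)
Your Harnack-chaining argument is correct, and it takes a genuinely different route from the paper. The paper's proof proceeds in two probabilistic steps: first, via the strong Markov property and the invariance principle, the walk started at $x_1$ reaches the Euclidean ball $B_{x_2}(\epsilon N)$ before exiting the annulus with probability bounded below; second, for each $w$ on $\partial^{\mathrm{i}} B_{x_2}(\epsilon N)$, the quantities $G_A(w,x_3)$ and $G_A(x_2,x_3)$ are decomposed over the exit measure from $B_{x_2}(2\epsilon N)$, and the two exit distributions are compared via Lemma 6.3.7 of Lawler--Limic. You instead bridge the possibly diametrically opposite points $x_1,x_2$ by $K=O(1)$ applications of the packaged discrete Harnack inequality, trading the paper's hitting-probability estimate for the elementary combinatorial fact that $\partial^{\mathrm{i}}\mathbf{B}(\lambda_2 N)$ can be traversed in $O(1/\varepsilon)$ jumps of size at most $\varepsilon N$. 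Since the exit-measure comparison is precisely what underlies the discrete Harnack inequality, the two routes rest on the same core tool and the difference is one of packaging: yours is cleaner if one is content to cite Harnack as a black box, while the paper's is more self-contained at the level of exit measures. Two minor points are worth tidying: the opening claim that $G_A(\cdot,x_3)$ is harmonic at \emph{every} $y\in A^c\setminus\{x_3\}$ overstates matters (the mean-value identity at $y$ requires every neighbor of $y$ to lie in $A^c$ as well), though your subsequent restriction to balls strictly inside the annulus renders this harmless; and the Lawler--Limic Harnack inequality is stated for Euclidean balls, so your use of $\ell^1$ balls $\mathbf{B}$ needs a short covering argument or simply a switch to Euclidean balls, as the paper does. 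Neither affects correctness.
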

\begin{proof}
	Let $\epsilon=\frac{1}{4\sqrt{d}}\cdot \min\{\lambda_3-\lambda_2,\lambda_2-\lambda_1\}$. Note that for any $x_2\in \partial^{\mathrm{i}}\mathbf{B}(\lambda_2N)$, the $\ell^2$ ball $B_{x_2}(\epsilon N)$ is disjoint from $\mathbf{B}(\lambda_1 N)\cup [\mathbf{B}(\lambda_3N)]^c$. By strong Markov property, 
	\begin{equation}\label{2.8}
		\begin{split}
			G_{A}(x_1, x_3)\ge& \mathbb{P}_{x_1} \left(\tau_{\partial^{\mathrm{i}} B_{x_2}(\epsilon N)}<\tau_{\mathbf{B}(\lambda_1 N)\cup [\mathbf{B}(\lambda_3N)]^c} \right)  \cdot \min_{w\in \partial^{\mathrm{i}} B_{x_2}(\epsilon N)} G_{A}(w, x_3)\\
			\ge &c\cdot  \min_{w\in \partial^{\mathrm{i}} B_{x_2}(\epsilon N)} G_{A}(w, x_3), 
		\end{split}		 
	\end{equation}
	where we use the invariance principle in the second inequality. For each $w\in \partial^{\mathrm{i}} B_{x_2}(\epsilon N)$, since $A\cup \{x_3\}\subset [B_{x_2}(2\epsilon N)]^c$, one has: 
	\begin{equation}
		G_{A}(w, x_3)= \sum_{ w'\in \partial^{\mathrm{o}}B_{x_2}(2\epsilon N)} \mathbb{P}_{w}\left(\tau_{\partial^{\mathrm{o}}B_{x_2}(2\epsilon N)}=\tau_{w'} \right) G_{A}(w',x_3),  
	\end{equation}
	\begin{equation}
		G_{A}(x_2, x_3)= \sum_{ w'\in \partial^{\mathrm{o}}B_{x_2}(2\epsilon N)} \mathbb{P}_{x_2}\left(\tau_{\partial^{\mathrm{o}}B_{x_2}(2\epsilon N)}=\tau_{w'} \right) G_{A}(w',x_3).  
	\end{equation}
	Furthermore, it follows from \cite[Lemma 6.3.7]{lawler2010random} that 
	\begin{equation}\label{2.11}
		\mathbb{P}_{w}\left(\tau_{\partial^{\mathrm{o}}B_{x_2}(2\epsilon N)}=\tau_{w'} \right)\ge c\cdot \mathbb{P}_{x_2}\left(\tau_{\partial^{\mathrm{o}}B_{x_2}(2\epsilon N)}=\tau_{w'} \right),\ \forall  w'\in \partial^{\mathrm{o}}B_{x_2}(2\epsilon N).
	\end{equation}
	Thus, combining (\ref{2.8})-(\ref{2.11}), we obtain the desired bound (\ref{2.7}). 
\end{proof}

\textbf{Hitting two distant vertices on $\mathbb{Z}^2$:} The next lemma shows that for the random walk on $\mathbb{Z}^2$, the probability of hitting a vertex before another one from a comparable distance is uniformly bounded away from $0$.

\begin{lemma}\label{lemma_hit_distant}
	In the case of $\mathbb{Z}^2$, for any $\lambda>1.1$, there exist constants $C_4>0$ and $c_4\in (0,1)$ such that for any $N>C_4$, $x_1\in \partial^{\mathrm{i}}\mathbf{B}(N)$ and $x_2\in \partial^{\mathrm{i}}\mathbf{B}(\lambda N)$, 
	\begin{equation}
		\mathbb{P}_{x_1}\left(\tau_{x_2}<\tau_{\bm{0}} \right)\ge c_4. 
	\end{equation}
\end{lemma}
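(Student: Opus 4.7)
The plan is to use the potential kernel $a(\cdot)$ of the planar simple random walk, the recurrent analogue of the Green's function, which satisfies the classical asymptotic $a(x) = \tfrac{2}{\pi}\log|x| + \kappa + O(|x|^{-2})$ as $|x|\to\infty$ together with $a(\bm{0})=0$ (see, e.g., \cite[Theorem 4.4.4]{lawler2010random}). The first step is to invoke the standard two-point escape formula: for distinct $y,z\in\mathbb{Z}^2$ and $x\in\mathbb{Z}^2\setminus\{y,z\}$,
\begin{equation*}
\mathbb{P}_x(\tau_y<\tau_z) \;=\; \frac{a(x-z)+a(y-z)-a(x-y)}{2\,a(y-z)}.
\end{equation*}
Indeed the right-hand side is bounded on $\mathbb{Z}^2$ (since $a(x-z)-a(x-y)\to 0$ as $|x|\to\infty$ by the asymptotic of $a$), is discrete-harmonic on $\mathbb{Z}^2\setminus\{y,z\}$, and equals $1$ at $y$ and $0$ at $z$; by recurrence the left-hand side has the same properties, and bounded discrete-harmonic functions on $\mathbb{Z}^2\setminus\{y,z\}$ with prescribed boundary data at $\{y,z\}$ are unique (both agree with $\mathbb{E}_x[u(S_{\tau_{\{y,z\}}})]$), so the two coincide. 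Specializing to $y=x_2$, $z=\bm{0}$, $x=x_1$ gives
\begin{equation*}
\mathbb{P}_{x_1}(\tau_{x_2}<\tau_{\bm{0}}) \;=\; \frac{a(x_1)+a(x_2)-a(x_1-x_2)}{2\,a(x_2)}.
\end{equation*}

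The remaining step is an asymptotic estimate. Since $x_1\in\partial^{\mathrm{i}}\mathbf{B}(N)$ and $x_2\in\partial^{\mathrm{i}}\mathbf{B}(\lambda N)$, we have $|x_1|\in[N/\sqrt{2},N]$ and $|x_2|\in[\lambda N/\sqrt{2},\lambda N]$ (up to $O(1)$ corrections), while the hypothesis $\lambda>1.1$ yields $\|x_1-x_2\|\ge(\lambda-1)N-1$, hence $|x_1-x_2|\ge\tfrac{\lambda-1}{\sqrt{2}}N-O(1)$; trivially $|x_1-x_2|\le(1+\lambda)N$. Inserting the expansion of $a$, the numerator becomes $\tfrac{2}{\pi}\log\bigl(|x_1|\,|x_2|/|x_1-x_2|\bigr)+O(1)\ge\tfrac{2}{\pi}\log N-C$, and the denominator equals $\tfrac{4}{\pi}\log N+O(1)$. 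The ratio therefore tends to $1/2$ uniformly in $x_1,x_2$ as $N\to\infty$, and so is bounded below by, say, $c_4=1/3$ once $N$ exceeds some $C_4$.

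The only (minor) obstacle is the uniformity of the error terms in $x_1,x_2$: the hypothesis $\lambda>1.1$ (rather than merely $\lambda>1$) is invoked precisely to secure a linear-in-$N$ lower bound on $|x_1-x_2|$, so that the $O(|x|^{-2})$ remainder in the expansion of $a(x_1-x_2)$ stays negligible next to $\log N$; the $\ell^1$-to-$\ell^2$ conversion is what forces the constant in this separation to be strictly positive.
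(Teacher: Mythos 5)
Your proof is correct, but it takes a genuinely different route from the paper. The paper decomposes by running the walk from $x_1$ out to $\partial^{\mathrm{i}}\mathbf{B}(100\lambda N)$ and then back in, quoting \cite[Proposition 1.6.7]{lawler2013intersections} to show the first escape happens before hitting $\{\bm{0},x_2\}$ with probability $1-O([\log N]^{-1})$, and then \cite[Proposition 6.5.4]{lawler2010random} to compare the re-entry probability with the two-point harmonic measure $\mathbb{H}_{\{\bm{0},x_2\}}(x_2)=1/2$. You instead write down the exact two-point hitting formula $\mathbb{P}_{x_1}(\tau_{x_2}<\tau_{\bm{0}})=\bigl[a(x_1)+a(x_2)-a(x_1-x_2)\bigr]/\bigl[2a(x_2)\bigr]$ via the potential kernel, verifying the identity directly by recurrence and uniqueness of bounded harmonic functions with prescribed values at $\{\bm{0},x_2\}$, and then plug in the asymptotic $a(x)=\tfrac{2}{\pi}\log|x|+\kappa+O(|x|^{-2})$. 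Your computation of the algebra and of the formula's boundary values is right; the hypothesis $\lambda>1.1$ is correctly identified as what guarantees $|x_1-x_2|\gtrsim N$, which is exactly what keeps $a(x_1-x_2)$ from blowing the numerator up. The payoff of your approach is that it is fully explicit and shows the stronger statement that $\mathbb{P}_{x_1}(\tau_{x_2}<\tau_{\bm{0}})\to 1/2$ uniformly, whereas the paper's excursion-style argument is slightly less exact but avoids introducing the potential kernel. Either is a perfectly good proof of the lemma as stated.
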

\begin{proof}
		By strong Markov property, we have 
	\begin{equation}\label{2.13}
		\mathbb{P}_{x_1} \left( \tau_{x_2}<\tau_{\bm{0}} \right) \ge \mathbb{P}_{x_1} \left( \tau_{\partial^{\mathrm{i}}\mathbf{B}(100\lambda N)}<\tau_{\{\bm{0},x_2\}} \right)\cdot \min_{w\in \partial^{\mathrm{i}}\mathbf{B}(100\lambda N)} \mathbb{P}_w\left(\tau_{x_2}<\tau_{\bm{0}} \right).   
	\end{equation}
	\cite[Proposition 1.6.7]{lawler2013intersections} implies that for a random walk starting from $\partial^{\mathrm{i}}\mathbf{B}(N)$, the probability that hits $\bm{0}$ (or $x_2$) before $\partial^{\mathrm{i}}\mathbf{B}(100\lambda N)$ is of the same order as $[\log(N)]^{-1}$. Therefore, the first term on the RHS of (\ref{2.13}) is at least $1-C[\log(N)]^{-1}$. Moreover, by \cite[Proposition 6.5.4]{lawler2010random}, for a random walk starting from $\partial^{\mathrm{i}}\mathbf{B}(100\lambda N)$, the probability that hits $x_2$ before $\bm{0}$ is of the same order as $\mathbb{H}_{\{x_2,\bm{0}\}}(x_2)$, which equals to $\frac{1}{2}$ by symmetry. Hence, the second term on the RHS of (\ref{2.13}) is bounded from below by some $c\in (0,1)$. Combining the aforementioned estimates, we conclude this lemma.
\end{proof}

\textbf{Escape probability and capacity on $\mathbb{Z}^d$ for $d\ge 3$:} In this part, we assume $d\ge 3$. For any finite $A\subset \mathbb{Z}^d$ and $x\in A$, the escape probability of $A$ at $x$ is $\mathrm{Es}_A(x) := \mathbb{P}_x\left( \tau_A^+=\infty \right)$. The capacity of $A$ is defined as $\mathrm{cap}(A):= \sum_{x\in A}\mathrm{Es}_A(x)$. Note that $\mathrm{cap}(\cdot)$ is invariant under translation. By \cite[Equation (2.13)]{lawler2013intersections}, the harmonic measure of $A$ can be equivalently written as
\begin{equation}\label{2.14}
	\mathbb{H}_A(x)= \frac{\mathrm{Es}_A(x)}{\mathrm{cap}(A)}, \ \forall x\in A. 
\end{equation}
We cite some basic properties of the capacity as follows. 
\begin{lemma}[{\cite[Proposition 2.2.1]{lawler2013intersections}}]\label{lemma_2.5}
	For any finite $A_1,A_2\subset \mathbb{Z}^d$ with $d\ge 3$, 
	\begin{equation}
		\mathrm{cap}(A_1)+ \mathrm{cap}(A_2) \ge  \mathrm{cap}(A_1\cup A_2)+ \mathrm{cap}(A_1\cap A_2). 
	\end{equation}
	In addition, if $A_1\subset A_2$, then one has 
	\begin{equation}\label{2.16}
		\mathrm{cap}(A_1)\le \mathrm{cap}(A_2). 
	\end{equation}
\end{lemma}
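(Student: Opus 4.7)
The plan is to represent the capacity as the leading coefficient in the asymptotic expansion of the hitting probability of $A$ from a point $x$ at large distance, and then deduce both inequalities from simple set-theoretic relations between hitting events. Concretely, by a last-exit decomposition analogous to Lemma \ref{lemma2.3}, I would first establish the identity
\begin{equation*}
\mathbb{P}_x\bigl(\tau_A < \infty\bigr) \;=\; \sum_{y \in A} G(x,y)\,\mathrm{Es}_A(y), \qquad x \in \mathbb{Z}^d,
\end{equation*}
by grouping paths to $A$ according to the location of the last visit to $A$: each last visit at $y$ contributes $G(x,y)$ (the expected number of visits to $y$) times $\mathrm{Es}_A(y) = \mathbb{P}_y(\tau_A^+ = \infty)$ (the probability that once at $y$ the walker never returns to $A$); transience in $d\ge 3$ ensures every visit to $A$ has an a.s. finite last occurrence. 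Combined with the classical asymptotic $G(x,y) = c_d\,|x-y|^{2-d} + O(|x-y|^{-d})$ valid for $d \ge 3$, this yields
\begin{equation*}
\mathbb{P}_x\bigl(\tau_A < \infty\bigr) \;=\; c_d\,|x|^{2-d}\,\mathrm{cap}(A)\,\bigl(1+o(1)\bigr) \quad \text{as } |x| \to \infty.
\end{equation*}

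With this representation in hand, the rest is essentially bookkeeping. For strong subadditivity, I would use the pointwise identity
\begin{equation*}
\mathbbm{1}_{\{\tau_{A_1}<\infty\}} + \mathbbm{1}_{\{\tau_{A_2}<\infty\}} \;=\; \mathbbm{1}_{\{\tau_{A_1\cup A_2}<\infty\}} + \mathbbm{1}_{\{\tau_{A_1}<\infty,\,\tau_{A_2}<\infty\}},
\end{equation*}
together with the containment $\{\tau_{A_1\cap A_2}<\infty\} \subset \{\tau_{A_1}<\infty,\,\tau_{A_2}<\infty\}$ (hitting the intersection implies hitting both). Taking $\mathbb{P}_x$-expectation, dividing by $c_d|x|^{2-d}$, and sending $|x|\to\infty$ converts each probability into the corresponding capacity and delivers (2.15). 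For the monotonicity statement (2.16), the inclusion $A_1 \subset A_2$ gives the trivial event containment $\{\tau_{A_1}<\infty\} \subset \{\tau_{A_2}<\infty\}$, and the same rescaling-and-limit procedure yields $\mathrm{cap}(A_1)\le\mathrm{cap}(A_2)$.

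The main technical point is the asymptotic step: one must ensure that the $o(1)$ error in $G(x,y)=c_d|x|^{2-d}(1+o(1))$ is uniform over $y$ ranging in the bounded set $A_1 \cup A_2$, so that the finite sum defining $\mathrm{cap}(A)$ can be passed through the limit. Since $A_1\cup A_2$ is finite, its diameter is bounded and this uniformity is automatic, so no additional work is required. Note also that monotonicity does \emph{not} follow from strong subadditivity alone (applying (2.15) with $A_1\subset A_2$ gives a tautology), which is why the hitting-event containment argument is handled separately.
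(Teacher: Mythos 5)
Your proof is correct and follows the standard argument. The paper does not prove this lemma but cites it from \cite{lawler2013intersections}; the argument there is essentially the one you describe, passing through the last-exit identity $\mathbb{P}_x(\tau_A<\infty)=\sum_{y\in A}G(x,y)\,\mathrm{Es}_A(y)$ and the Green's function asymptotics to reduce both inequalities to containments of hitting events.
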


The subsequent lemma indicates that for the random walk on $\mathbb{Z}^d$ for $d\ge 3$, the probability of never hitting a ball from a comparable distance is bounded away from $0$.

\begin{lemma}[{\cite[Proposition 6.4.2]{lawler2010random}}]\label{lemma_escape}
	For any $d\ge 3$ and $\lambda>1.1$, there exist constants $C_5(d)>0$ and $c_5(d)\in (0,1)$ such that for all $N>C_5$ and $x\in \partial^{\mathrm{i}}\mathbf{B}(\lambda N)$, 
	\begin{equation}
		\mathbb{P}_x\left(\tau_{\mathbf{B}(N)}=\infty \right) \ge c_5.
	\end{equation}
\end{lemma}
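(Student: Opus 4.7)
The goal is to bound the hitting probability $\mathbb{P}_x(\tau_{\mathbf{B}(N)}<\infty)$ uniformly away from $1$ for $x\in \partial^{\mathrm{i}}\mathbf{B}(\lambda N)$, which is the discrete counterpart of the classical fact that Brownian motion in $\mathbb{R}^d$ ($d\ge 3$) starting at Euclidean distance $r$ from a ball of radius $R<r$ hits that ball with probability $(R/r)^{d-2}<1$. My approach would go via the last-exit decomposition: since the walk is transient, conditioning on the last time it sits in $\mathbf{B}(N)$ and applying the Markov property yields
\begin{equation*}
\mathbb{P}_x(\tau_{\mathbf{B}(N)}<\infty) \;=\; \sum_{y\in \mathbf{B}(N)} G(x,y)\,\mathrm{Es}_{\mathbf{B}(N)}(y).
\end{equation*}
Into this I would plug two standard ingredients of the $d\ge 3$ analysis of simple random walk: the sharp Green's function asymptotic $G(x,y)=a_d\,|x-y|^{2-d}(1+o(1))$, and the capacity estimate $\mathrm{cap}(\mathbf{B}(N))=\kappa_d\,N^{d-2}(1+o(1))$, obtained by sandwiching the $\ell^1$ ball between Euclidean balls of comparable size. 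Since $\mathrm{Es}_{\mathbf{B}(N)}$ concentrates on $\partial^{\mathrm{i}}\mathbf{B}(N)$ and for $y$ there and $x\in \partial^{\mathrm{i}}\mathbf{B}(\lambda N)$ one has $|x-y|$ of order $N$, the sum reduces to $\mathbb{P}_x(\tau_{\mathbf{B}(N)}<\infty)\approx a_d\kappa_d\,\lambda^{-(d-2)}(1+o(1))$, with the leading constants $a_d$ and $\kappa_d$ conspiring to produce the continuum answer $\lambda^{-(d-2)}<1$.

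A parallel, more conceptual route I would present is via the invariance principle: after rescaling coordinates by $N$ and time by $N^2$, the walk converges to $d$-dimensional Brownian motion, and the hitting probability of the set $\{v\in \mathbb{R}^d:\|v\|_1\le 1\}$ as a function of the starting point is continuous on the compact $\ell^1$ sphere of radius $\lambda$; by transience of Brownian motion for $d\ge 3$ this function is strictly less than $1$, hence bounded above by some $1-c_5$, which one transfers to the discrete setting for large $N$ via a coupling or a direct Green's-function comparison.

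The main obstacle I expect is quantitative. The naive bound obtained by replacing $G(x,y)$ by the worst-case estimate $G\le C\bigl((\lambda-1)N/\sqrt{d}\bigr)^{2-d}$ uniformly over $y\in \mathbf{B}(N)$ combined with $\mathrm{cap}(\mathbf{B}(N))\le C'N^{d-2}$ produces only a constant upper bound, which for $\lambda$ close to $1.1$ and small $d$ need not lie below $1$. To break through, one must exploit either that $\mathrm{Es}_{\mathbf{B}(N)}$ lives on the outer boundary so that the typical $|x-y|$ is $\Theta(\lambda N)$ rather than the worst-case $(\lambda-1)N/\sqrt{d}$, or that the leading constants $a_d$ and $\kappa_d$ match through the potential kernel of the walk so that the product equals exactly $\lambda^{-(d-2)}(1+o(1))$. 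This constant-matching step, where the precise asymptotics of the potential kernel on $\mathbb{Z}^d$ enter, is the delicate part.
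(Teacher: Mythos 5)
The paper does not prove this lemma; it is quoted directly from Lawler--Limic \cite[Proposition 6.4.2]{lawler2010random}, so there is no in-paper proof to compare your sketch against. Taking your proposal on its own merits: both of your routes (last-exit decomposition with Green's function and capacity estimates, and the invariance principle) are standard and would yield the result, but your first route contains a concrete error. You claim the sum collapses to $a_d\kappa_d\lambda^{-(d-2)}(1+o(1))$ with the clean ``continuum answer'' $\lambda^{-(d-2)}$. That answer is specific to \emph{Euclidean} balls, where the Brownian hitting probability of $B(R)$ from $|x|=\lambda R$ is exactly $(R/|x|)^{d-2}=\lambda^{-(d-2)}$ by rotational symmetry of the Newtonian kernel. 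Here $\mathbf{B}(\cdot)$ is the $\ell^1$ ball, which is not rotationally invariant, so the Brownian hitting probability of $\{\|v\|_1\le 1\}$ from a point on $\{\|v\|_1=\lambda\}$ genuinely depends on the direction of the starting point (along an axis versus along a diagonal one gets different Euclidean distances, ranging from $(\lambda-1)/\sqrt{d}$ to $\lambda+1$ after rescaling), and there is no reason it equals $\lambda^{-(d-2)}$. Your later remark about ``the leading constants matching to give exactly $\lambda^{-(d-2)}$'' is therefore not attainable. Your diagnosis of the quantitative obstacle is accurate: the uniform bound $G(x,y)\le C\bigl((\lambda-1)N/\sqrt d\bigr)^{2-d}$ together with $\mathrm{cap}(\mathbf{B}(N))\le C' N^{d-2}$ gives a constant that need not be below $1$ when $\lambda$ is near $1.1$ and $d$ is moderate, and the alternative of pushing the escape measure to the ``far'' part of $\partial^{\mathrm{i}}\mathbf{B}(N)$ does not directly rescue this, because the $\ell^2$ distance from $x$ to the nearest boundary point of $\mathbf{B}(N)$ is still $\Theta\bigl((\lambda-1)N/\sqrt d\bigr)$ whenever $x$ sits near a diagonal. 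Your second route --- rescale by $N$, invoke the invariance principle, note the limiting hitting probability of the $\ell^1$ unit ball is a continuous function strictly below $1$ on the compact $\ell^1$ sphere of radius $\lambda$, and transfer back for $N$ large --- is the correct and economical way to obtain a $c_5(d,\lambda)>0$; this is essentially how such escape lower bounds are proven, and is what the cited \cite[Proposition 6.4.2]{lawler2010random} delivers in a sharper $(n/|x|)^{d-2}$ form for Euclidean balls. In short: the sketch is on the right track, but drop the $\lambda^{-(d-2)}$ exactness claim, or switch to Euclidean balls (which dominate $\ell^1$ balls up to constants) and lose only a constant in $\lambda$.
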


	\section{Vertex-removal stability for $\mathbb{Z}^2$}\label{section_psi_finite}
	
	In this section, we focus on the case of $\mathbb{Z}^2$, and aim to demonstrate that $\psi(\mathbb{Z}^2)<\infty$. In essence, our proof mainly relies on a geometric argument that guides us in making the correct removal. Specifically, in Section \ref{subsection_marginal} we define the marginal vertices (which serve as candidates that can be removed with a bounded price), and then we establish its existence in every $*$-connected set (see Lemma \ref{lemma_marginal}). Subsequently, Section \ref{subsection_decomposition} presents a useful upper bound for the price of removing a subset (see Lemma \ref{lemma_upper_bound}). With these preparations, we conclude $\psi(\mathbb{Z}^2)<\infty$ in Section \ref{subsection_proof_psi_finite}.

	\subsection{Existence of a marginal vertex}\label{subsection_marginal}
	
	We first introduce the definition of marginal vertices and $*$-cut vertices:
		
	\begin{definition}\label{definition_marginal_cut}
		For any $*$-connected $A\subset \mathbb{Z}^2$ and $z\in A$, 
		\begin{enumerate}[(a)]
			\item we say $z$ is a marginal vertex (of $A$) if $N^A_{\infty}(z)$ is connected in $N^A_{\infty,*}(z)$;

			\item we say $z$ is a $*$-cut vertex (of $A$) if $A\setminus \{z\}$ is not $*$-connected.

		\end{enumerate}

	\end{definition}
	
	In the example shown by Figure \ref{fig:marginal}, the set $A$ consists of all dots except the small cyan ones (which are the vertices in $N_{\infty}^A(z_i)$ for $i\in \{1,2,3,4\}$). Note that $A$ is not $*$-connected, and includes two $*$-clusters, denoted by $A_{(\bm{0})}$ (the one containing $\bm{0}$) and $A'$. It follows from Definition \ref{definition_marginal_cut} that $z_1$ (resp. $z_3$ and $x_4$) is a marginal vertex of $A'$ (resp. $A_{(\bm{0})}$), while $z_2$ is not a marginal vertex of $A'$.

	The following lemma is crucial to the proof of $\psi(\mathbb{Z}^2)<\infty$. 
	
	\begin{lemma}\label{lemma_marginal}
		For any non-empty, $*$-connected $A\subset \mathbb{Z}^2$, the following holds:
		\begin{enumerate}
			\item There exists a marginal vertex in $A$. 
			
			\item For any $*$-cut vertex $z\in A$, in every $*$-cluster of $A\setminus \{z\}$ there exists a marginal vertex of $A$. 
			
		\end{enumerate}

	\end{lemma}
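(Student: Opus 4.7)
My plan is to establish both parts of the lemma by an extremal-vertex argument followed by a short local case analysis around the chosen vertex. For part (1), the topmost-rightmost vertex of $A$ suffices; for part (2), I will adapt the choice to each $*$-cluster, picking a directional extremum that avoids obstruction by the cut vertex $z$.

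For part (1), I set $z^{*}=\arg\max_{v\in A}(y(v),x(v))$ (topmost, breaking ties by rightmost). By extremality, $z^{*}+e_{1}$, $z^{*}+e_{2}$, and $z^{*}+e_{1}+e_{2}$ all lie in $A^{c}_{\infty}$, since from each of them a monotone-increasing path in $x$ or $y$ reaches infinity without meeting $A$. In particular $\{z^{*}+e_{1},z^{*}+e_{2}\}\subseteq N^{A}_{\infty}(z^{*})$ is a pair of directly $*$-adjacent lattice-neighbors, and $z^{*}+e_{1}+e_{2}\in N^{A}_{\infty,*}(z^{*})$ connects them. A case analysis on the status (in $A$, in $A^{c}_{\infty}$, or in a finite hole of $A^{c}$) of each of $z^{*}-e_{1}$ and $z^{*}-e_{2}$ verifies that any additional element of $N^{A}_{\infty}(z^{*})$ is $*$-adjacent to one of these three vertices (in fact, the four lattice-neighbors form a $*$-cycle via diagonals and via each other, so any collection containing the NE pair is automatically $*$-connected). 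Hence $N^{A}_{\infty}(z^{*})$ is $*$-connected inside $N^{A}_{\infty,*}(z^{*})$, i.e.\ $z^{*}$ is marginal.

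For part (2), fix a $*$-cluster $C$ of $A\setminus\{z\}$. If $N^{A}_{\infty}(v)=\emptyset$ for every $v\in C$ — equivalently, $C$ is entirely enclosed by $A$ together with the finite holes of $A^{c}$ — then every $v\in C$ is vacuously marginal. Otherwise, I adapt the argument of (1) to $C$. The difficulty is that the naive topmost-rightmost vertex $w$ of $C$ may fail to be marginal in $A$ because $z$ could sit at $w+e_{1}$ or $w+e_{2}$, producing a forbidden horizontal or vertical pinch configuration. To sidestep this, I consider the four directional extrema of $C$ corresponding to the lex-maxima of $(\pm y,\pm x)$. Since $z$ is a single vertex, it can coincide with at most one of the eight outward lattice-neighbor positions spanned by these four candidates, so at least one extremum $w_{C}$ has both outward lattice-neighbors distinct from $z$. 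Moreover, $*$-separation of clusters in $A\setminus\{z\}$ forbids those outward neighbors from belonging to any other cluster (otherwise they would be $*$-adjacent to $w_{C}\in C$). The case analysis of (1) then applies to $w_{C}$ and yields marginality in $A$.

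The main obstacle will be the verification, in part (2), that the two outward lattice-neighbors of the chosen $w_{C}$ actually lie in $A^{c}_{\infty}$ rather than in a finite hole of $A^{c}$ produced by other clusters of $A\setminus\{z\}$. This step will rely on the $*$-connectivity of $\partial^{\mathrm{o}}_{\infty}A$ from Lemma \ref{lemma_connectivity}, together with the $*$-separation between distinct clusters of $A\setminus\{z\}$: another cluster cannot encircle $w_{C}$'s outward neighbor without producing a $*$-adjacency to $C$ not routed through $z$, contradicting the definition of clusters. A careful bookkeeping of the four candidate extrema and the single obstruction $z$ then shows that at least one $w_{C}$ is free of such encirclement, which completes the argument.
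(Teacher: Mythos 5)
Your plan for part (1) fails at the very first step: the topmost-rightmost vertex $z^{*}$ of a $*$-connected set $A$ need not be marginal, because it can be a $*$-cut vertex, and being a $*$-cut vertex can obstruct the diagonal route around $z^{*}$. Concretely, take $A=\{(0,0),(1,0),(2,-1)\}$. Then $z^{*}=(1,0)$ and $N^{A}_{\infty}(z^{*})=\{(2,0),(1,1),(1,-1)\}$. To connect $(1,-1)$ to $(2,0)$ or $(1,1)$ via a path in $N^{A}_{\infty,*}(z^{*})$ you must pass through either $(2,-1)$ or $(0,0)$, both of which lie in $A$; so the only neighbor of $(1,-1)$ available is $(0,-1)$, which itself is a dead end. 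Hence $(1,-1)$ is isolated from the rest of $N^{A}_{\infty}(z^{*})$ inside $N^{A}_{\infty,*}(z^{*})$ and $z^{*}$ is not marginal. The parenthetical ``the four lattice-neighbors form a $*$-cycle via diagonals and via each other, so any collection containing the NE pair is automatically $*$-connected'' conflates $*$-connectivity of $N^{A}_{\infty}(z^{*})$ with the stronger requirement of the definition, namely connectivity by ordinary paths staying inside $N^{A}_{\infty,*}(z^{*})$; the intermediate diagonal cells may belong to $A$ (here $(2,-1)$ does). Your sketch for part (2) has the same weakness and moreover only tracks $z$ as a possible obstruction at $w+e_{1}$ or $w+e_{2}$, overlooking that \emph{other} vertices of $A$ sitting at the diagonal positions $w\pm e_{1}\pm e_{2}$ are what actually breaks the route, exactly as in the example above.

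The paper's proof separates cleanly what your extremal choice was implicitly trying to do in one stroke. Lemma~\ref{lemma_cut} finds, via a $*$-graph-diameter extremal argument (the leaf of a ``farthest pair'' is not a cut vertex), a vertex that is not a $*$-cut vertex, and does so compatibly with the cluster structure needed for part (2). Lemma~\ref{lemma_cut_marginal} then proves that \emph{any} non-$*$-cut vertex is marginal, via a genuinely planar/topological argument: two escape paths from $v_{1},v_{2}\in N^{A}_{\infty}(z)$ to infinity cut the plane into components, exactly one of the two components touching $z$ is free of $A$ (here is where the non-$*$-cut hypothesis is used), and the boundary of that component gives the required path in $N^{A}_{\infty,*}(z)$. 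Your approach tries to get marginality from a purely local case analysis around a corner vertex, but marginality involves $A^{c}_{\infty}$, which is a global notion and cannot be certified locally without some structural hypothesis such as ``not a $*$-cut vertex.''
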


	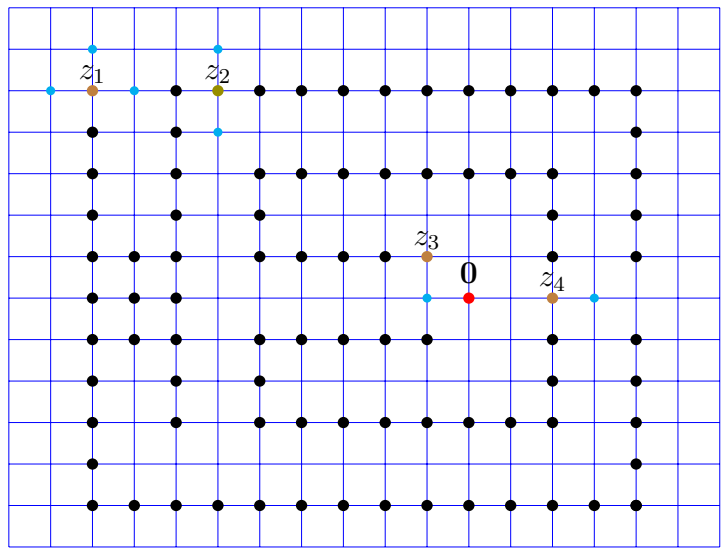
\begin{figure}[h!]
		\begin{tikzpicture}[scale=0.55]
			
			\draw[step=1cm,blue,ultra thin] (-10,-6) grid (7,7);
			\node at (1,0) [circle,fill=red,inner sep=1.5pt]{}; 
			\node at (1,0.6) []{$\bm{0}$};
			\node at (3,0) [circle,fill=brown,inner sep=1.5pt]{}; \node at (3,0.4) []{$z_4$};
			\node at (4,0) [circle,fill=cyan,inner sep=1.2pt]{}; 
			\node at (3,1) [circle,fill=black,inner sep=1.5pt]{}; 
			\node at (3,2) [circle,fill=black,inner sep=1.5pt]{};
			\node at (3,3) [circle,fill=black,inner sep=1.5pt]{};  
			\node at (2,3) [circle,fill=black,inner sep=1.5pt]{};  
			\node at (1,3) [circle,fill=black,inner sep=1.5pt]{};  
			\node at (0,3) [circle,fill=black,inner sep=1.5pt]{};  
			\node at (-1,3) [circle,fill=black,inner sep=1.5pt]{}; 
			\node at (-2,3) [circle,fill=black,inner sep=1.5pt]{}; 
			\node at (-3,3) [circle,fill=black,inner sep=1.5pt]{}; 
			\node at (-4,3) [circle,fill=black,inner sep=1.5pt]{}; 
			\node at (-4,2) [circle,fill=black,inner sep=1.5pt]{}; 
			\node at (-4,1) [circle,fill=black,inner sep=1.5pt]{}; 
			\node at (-3,1) [circle,fill=black,inner sep=1.5pt]{}; 
			\node at (-2,1) [circle,fill=black,inner sep=1.5pt]{}; 
			\node at (-1,1) [circle,fill=black,inner sep=1.5pt]{}; 
			\node at (0,1) [circle,fill=brown,inner sep=1.5pt]{}; \node at (0,1.4) []{$z_3$};
			\node at (0,0) [circle,fill=cyan,inner sep=1.2pt]{};

			\node at (-4,-1) [circle,fill=black,inner sep=1.5pt]{}; 
			\node at (-3,-1) [circle,fill=black,inner sep=1.5pt]{}; 
			\node at (-2,-1) [circle,fill=black,inner sep=1.5pt]{}; 
			\node at (-1,-1) [circle,fill=black,inner sep=1.5pt]{}; 
			\node at (0,-1) [circle,fill=black,inner sep=1.5pt]{}; 
			
			\node at (3,-2) [circle,fill=black,inner sep=1.5pt]{};
			\node at (3,-3) [circle,fill=black,inner sep=1.5pt]{};  
			\node at (2,-3) [circle,fill=black,inner sep=1.5pt]{};  
			\node at (1,-3) [circle,fill=black,inner sep=1.5pt]{};  
			\node at (0,-3) [circle,fill=black,inner sep=1.5pt]{};  
			\node at (-1,-3) [circle,fill=black,inner sep=1.5pt]{}; 
			\node at (-2,-3) [circle,fill=black,inner sep=1.5pt]{}; 
			\node at (-3,-3) [circle,fill=black,inner sep=1.5pt]{}; 
			\node at (-4,-3) [circle,fill=black,inner sep=1.5pt]{}; 
			\node at (-4,-2) [circle,fill=black,inner sep=1.5pt]{};
			\node at (3,-1) [circle,fill=black,inner sep=1.5pt]{};  
			
			\node at (5,1) [circle,fill=black,inner sep=1.5pt]{}; 
			\node at (5,2) [circle,fill=black,inner sep=1.5pt]{}; 
			\node at (5,3) [circle,fill=black,inner sep=1.5pt]{}; 
			\node at (5,4) [circle,fill=black,inner sep=1.5pt]{}; 
			\node at (5,5) [circle,fill=black,inner sep=1.5pt]{}; 
			\node at (5,-1) [circle,fill=black,inner sep=1.5pt]{}; 
			\node at (5,-2) [circle,fill=black,inner sep=1.5pt]{}; 
			\node at (5,-3) [circle,fill=black,inner sep=1.5pt]{}; 
			\node at (5,-4) [circle,fill=black,inner sep=1.5pt]{}; 
			\node at (5,-5) [circle,fill=black,inner sep=1.5pt]{}; 
			\foreach \x in {-8,...,5}
			{
				\node at (\x,-5) [circle,fill=black,inner sep=1.5pt]{}; 	
			}
			\foreach \y in {-4,...,4}
			{
				\node at (-8,\y) [circle,fill=black,inner sep=1.5pt]{}; 	
			}
			
			\node at (-7,0) [circle,fill=black,inner sep=1.5pt]{};
			\node at (-7,1) [circle,fill=black,inner sep=1.5pt]{};
			\node at (-7,-1) [circle,fill=black,inner sep=1.5pt]{};
			
			\node at (-8,5) [circle,fill=brown,inner sep=1.5pt]{}; \node at (-8,5.4) []{$z_1$};
			\node at (-8,6) [circle,fill=cyan,inner sep=1.2pt]{}; 
			\node at (-7,5) [circle,fill=cyan,inner sep=1.2pt]{}; 
			\node at (-9,5) [circle,fill=cyan,inner sep=1.2pt]{}; 
			\node at (-5,6) [circle,fill=cyan,inner sep=1.2pt]{}; \node at (-5,5.4) []{$z_2$};
			\node at (-5,4) [circle,fill=cyan,inner sep=1.2pt]{};

			
			\node at (4,5) [circle,fill=black,inner sep=1.5pt]{}; 
			\node at (3,5) [circle,fill=black,inner sep=1.5pt]{}; 
			\node at (2,5) [circle,fill=black,inner sep=1.5pt]{}; 
			\node at (1,5) [circle,fill=black,inner sep=1.5pt]{}; 
			\node at (0,5) [circle,fill=black,inner sep=1.5pt]{}; 
			\node at (-1,5) [circle,fill=black,inner sep=1.5pt]{}; 
			\node at (-2,5) [circle,fill=black,inner sep=1.5pt]{};
			\node at (-3,5) [circle,fill=black,inner sep=1.5pt]{};  
			\node at (-4,5) [circle,fill=black,inner sep=1.5pt]{}; 
			\node at (-5,5) [circle,fill=olive,inner sep=1.5pt]{}; 
			\node at (-6,5) [circle,fill=black,inner sep=1.5pt]{}; 
			
			\node at (-6,4) [circle,fill=black,inner sep=1.5pt]{}; 
			\node at (-6,3) [circle,fill=black,inner sep=1.5pt]{}; 
			\node at (-6,2) [circle,fill=black,inner sep=1.5pt]{}; 
			\node at (-6,1) [circle,fill=black,inner sep=1.5pt]{}; 
			\node at (-6,0) [circle,fill=black,inner sep=1.5pt]{}; 
			\node at (-6,-1) [circle,fill=black,inner sep=1.5pt]{}; 
			\node at (-6,-2) [circle,fill=black,inner sep=1.5pt]{}; 
			\node at (-6,-3) [circle,fill=black,inner sep=1.5pt]{};

		\end{tikzpicture}
			\caption{An illustration for marginal vertex \label{fig:marginal}}
	\end{figure}

	The proof of Lemma \ref{lemma_marginal} can be divided into the following two steps.
	
	\begin{lemma}\label{lemma_cut}
		For any non-empty, $*$-connected $A\subset \mathbb{Z}^2$, the following holds: 
		\begin{enumerate}
			\item There exists a vertex in $A$ that is not a $*$-cut vertex. 
			
			\item For any $*$-cut vertex $z\in A$, in every $*$-cluster of $A\setminus \{z\}$ there exists a vertex that is not a $*$-cut vertex of $A$.
		
		\end{enumerate}
	\end{lemma}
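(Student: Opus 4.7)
The statement is a purely combinatorial property of the induced $*$-graph on $A$, so my plan is to reduce everything to the classical fact that every finite tree with at least two vertices has at least two leaves, together with the observation that removing a leaf of a spanning tree yields a spanning tree of the remaining vertices. In particular, a leaf of a spanning tree of a connected graph is automatically a non-cut vertex.

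For part (1), I would simply pick an arbitrary spanning tree $T$ of the $*$-graph on $A$. If $|A|=1$, the unique vertex is not a $*$-cut vertex because $\emptyset$ is declared $*$-connected in the preliminaries. Otherwise $T$ has at least two leaves; for any leaf $v$ of $T$, the subtree $T\setminus\{v\}$ spans $A\setminus\{v\}$, so $A\setminus\{v\}$ is $*$-connected and $v$ is not a $*$-cut vertex.

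For part (2), let $z\in A$ be a $*$-cut vertex and $A_1$ any $*$-cluster of $A\setminus\{z\}$. Set $B:=A_1\cup\{z\}$. First I would observe that since $A$ itself is $*$-connected while $A\setminus\{z\}$ is not, every $*$-cluster of $A\setminus\{z\}$ must contain a vertex $*$-adjacent to $z$ (else that cluster would be $*$-disconnected from the rest of $A$, contradicting $*$-connectedness of $A$). Consequently $B$ is $*$-connected and, by the same reasoning applied to all the other $*$-clusters, so is $A\setminus A_1$ (which is $\{z\}$ together with the union of the remaining clusters). Now take a spanning tree $T_B$ of the $*$-graph on $B$; since $|B|\ge 2$, $T_B$ has at least two leaves, so I can choose a leaf $v\neq z$, which lies in $A_1$. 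Then $T_B\setminus\{v\}$ spans $B\setminus\{v\}$, so $B\setminus\{v\}$ is $*$-connected. Writing
\[
A\setminus\{v\} \;=\; (B\setminus\{v\})\cup(A\setminus A_1),
\]
both pieces on the right are $*$-connected and both contain $z$, so their union is $*$-connected. Hence $v$ is a non-$*$-cut vertex of $A$ inside $A_1$, which is the claim.

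The only delicate point in the argument is the verification that $A\setminus A_1$ is $*$-connected; this is where the global $*$-connectedness of $A$ is essential, via the observation that every cluster of $A\setminus\{z\}$ is anchored to $z$ through a $*$-edge. Everything else is the standard two-leaves fact for finite trees, so I do not anticipate any further technical difficulty.
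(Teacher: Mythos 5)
Your proof is correct but proceeds by a different classical argument from the paper's. The paper establishes both items via a ``farthest vertex'' argument: for part (1) it picks $z_1,z_2$ realizing the $*$-graph diameter of $A$ and argues $z_2$ cannot be a $*$-cut vertex (otherwise some $y_\dagger$ would lie strictly farther from $z_1$ than the diameter, a contradiction); for part (2) it analogously picks $\bar{w}$ maximizing $\boldsymbol{\mathrm{D}}^{A'\cup\{z\}}(\cdot,z)$ within a given $*$-cluster $A'$. You instead use spanning trees together with the fact that a finite tree with at least two vertices has at least two leaves, observing that a leaf of a spanning tree is never a cut vertex of the ambient graph. The two routes are closely related --- a vertex at maximal graph distance from a reference point is a leaf of any shortest-path spanning tree rooted there --- so neither is more powerful, but yours packages the verification more explicitly, particularly in part (2) where you decompose $A\setminus\{v\}=(B\setminus\{v\})\cup(A\setminus A_1)$ into two $*$-connected pieces sharing $z$. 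Your preliminary observation that $A\setminus A_1$ is $*$-connected (because each $*$-cluster of $A\setminus\{z\}$ is anchored to $z$ by some $*$-edge, forced by the $*$-connectivity of $A$) is the same structural fact the paper uses implicitly when it asserts $A''\cup\{z\}$ is $*$-connected. Both arguments are elementary, sound, and of comparable length.
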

	\begin{proof}
		We prove these two items separately as follows. 
		
		(1) When $|A|\le 2$, it is obvious that every vertex in $A$ is not a $*$-cut vertex.

		When $|A|\ge 3$, there exist $z_1,z_2\in A$ such that the $*$-graph distance between $z_1$ and $z_2$ in $A$ realizes the $*$-graph diameter of $A$. I.e., $\boldsymbol{\mathrm{D}}^A_{*}(z_1,z_2)=\mathrm{diam}_{*}(A)$. In what follows, we prove that $z_2$ is not a $*$-cut vertex. To get this, it suffices to show that for any $y\in A\setminus \{z_1,z_2\}$, there is a $*$-path from $z_1$ to $y$ without intersecting $z_2$. If this is not true, then there exists $y_{\dagger}\in A\setminus \{z_1,z_2\}$ such that every $*$-path from $z_1$ must intersect $z_2$ before $y_{\dagger}$. This implies that $\boldsymbol{\mathrm{D}}^A_{*}(z_1,y_{\dagger})> \boldsymbol{\mathrm{D}}^A_{*}(z_1,z_2)=\mathrm{diam}_{*}(A)$, which is contradictory with the definition of $\mathrm{diam}_{*}(A)$. Now we conclude that $z_2$ is not a $*$-cut vertex and thus confirm Item (1).

		(2) Let $z$ be a $*$-cut vertex of $A$, and let $A'$ be an arbitrary $*$-cluster of $A\setminus \{z\}$. Assume that $\bar{w}$ is a vertex in $A'$ that maximizes $\boldsymbol{\mathrm{D}}^{A'\cup \{z\}}(\cdot, z)$. Subsequently, we prove that $\bar{w}$ is not a $*$-cut vertex of $A$ by using a similar argument as in Item (1). For this, it suffices to show that for any $w\in A\setminus \{z,\bar{w}\}$, there is a $*$-path $\eta_*$ from $z$ to $w$ without intersecting $\bar{w}$. In fact, for any $w$ in some $*$-cluster $A''$ of $A\setminus \{z\}$ other than $A'$, such a $*$-path $\eta_*$ exists since $A''\cup \{z\}$ is $*$-connected. Furthermore, assume that there is a counter-example $w_{\dagger}\in A'\setminus \{\bar{w}\}$ (i.e. every $*$-path from $z$ to $w_{\dagger}$ must intersect $\bar{w}$), then one has $\boldsymbol{\mathrm{D}}^{A'\cup \{z\}}(w_{\dagger}, z)>\boldsymbol{\mathrm{D}}^{A'\cup \{z\}}(\bar{w}, z)$, which is incompatible with the maximality of $\boldsymbol{\mathrm{D}}^{A'\cup \{z\}}(\bar{w}, z)$. By contradiction, such a counter-example does not exist and consequently, the proof is now complete.
	\end{proof}

	\begin{lemma}\label{lemma_cut_marginal}
		For any $*$-connected $A\subset \mathbb{Z}^2$ and $z\in A$, if $z$ is not a $*$-cut vertex of $A$, then $z$ is a marginal vertex of $A$.
	\end{lemma}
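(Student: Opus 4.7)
The plan is to prove the contrapositive: if $z$ is not a marginal vertex of $A$, then $z$ is a $*$-cut vertex. Assume $z$ is not marginal, so there exist $v_1, v_2 \in N^A_\infty(z)$ that lie in distinct connected components of $N^A_{\infty,*}(z)$. I first analyze the $*$-neighborhood ring $N_*(z)$ as an $8$-cycle of axis-adjacent vertices around $z$, classifying each ring vertex as type A (in $A$), type I (in $A^c_\infty$), or type F (in a finite cluster of $A^c$). The key local observation is that type-I and type-F ring vertices cannot be axis-adjacent, for otherwise they would lie in a common $A^c$-cluster but one of them is infinite and the other finite. Consequently, in the ring cycle every maximal arc of type-I vertices is bordered on each side by a type-A vertex. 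Letting $I_1$ denote the type-I arc containing $v_1$, I define $a_1, a_2$ to be the two vertices of $N_*(z)$ immediately adjacent to the endpoints of $I_1$ along the cycle; both are of type A, both are $*$-adjacent to $z$, and $v_1, v_2$ lie in the two opposite ring arcs between $a_1$ and $a_2$ (since $v_1, v_2$ are in different type-I components).

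Since $z$ is not a $*$-cut vertex, there is a $*$-path $\gamma$ in $A \setminus \{z\}$ from $a_1$ to $a_2$. I arrange $\gamma$ to be planar-simple (non-self-crossing when drawn as straight segments in $\mathbb{R}^2$) by the standard swap: any two diagonal $*$-edges of $\gamma$ that cross at the center of a unit square get replaced by the two parallel axis-edges spanning that square, preserving the endpoints and staying in $A$. Then I form the closed polygonal curve $C = \gamma \cup \overline{z a_1} \cup \overline{z a_2}$ in $\mathbb{R}^2$, where $\overline{xy}$ denotes the line segment. The claim is that $C$ is a Jordan curve: the only internal crossing of $\overline{z a_i}$ with $\gamma$ can happen when $a_i$ is a corner neighbor of $z$, and the candidate crossing $*$-edge would have to join the two cardinals of $z$ that are axis-adjacent to $a_i$; but by the choice of $a_i$, one of those cardinals is the endpoint of $I_1$, hence of type I, hence not in $A \supset \gamma$, so the crossing cannot occur.

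By the polygonal Jordan curve theorem, $\mathbb{R}^2 \setminus C$ splits into a bounded interior and an unbounded exterior. The two segments $\overline{z a_1}, \overline{z a_2}$ form a wedge at $z$ whose two angular sectors lie in different components of $\mathbb{R}^2 \setminus C$ near $z$. Since $v_1$ and $v_2$ live in the two opposite sectors, and since the radial axis segments $\overline{z v_1}$ and $\overline{z v_2}$ meet $C$ only at $z$ (lattice axis-edges and lattice $*$-edges intersect only at integer lattice points, and $v_1, v_2 \notin A \supset C$), the points $v_1, v_2$ lie in different components of $\mathbb{R}^2 \setminus C$; in particular, one of them, say $v_1$, lies in the bounded interior.

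The main obstacle is the final step: ruling out that the escape path from $v_1$ can cross $C$. Since $v_1 \in A^c_\infty$, there is an axis-path $\rho \subset A^c$ from $v_1$ to infinity; I need $\rho \cap C = \emptyset$. The lattice points of $C$ lie in $A$ while those of $\rho$ lie in $A^c$, so they share no lattice points; and the lattice-geometric fact that an axis lattice edge and a $*$-lattice edge can only intersect at integer lattice points means their interiors cannot cross either. Thus $\rho$ is disjoint from $C$, yet $\rho$ runs from the bounded interior of $C$ to the unbounded exterior --- a contradiction with Jordan's theorem. This forces $z$ to be a $*$-cut vertex, completing the contrapositive.
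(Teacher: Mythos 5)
Your proof is correct, and it takes a genuinely different route from the paper's. The paper argues the implication directly: given $v_1,v_2\in N^A_\infty(z)$, it joins each $v_i$ to a distant vertex by an $A^c$-path, forms the planar complex $\widetilde\eta$ from those paths together with the edges $\{z,v_1\},\{z,v_2\}$, and then uses the $*$-path supplied by the non-cut hypothesis in a face-connectivity argument to show that one of the two $z$-adjacent components of $\mathbb R^2\setminus\widetilde\eta$ contains no vertex of $A$; the connector required for marginality is then read off the boundary faces of that component. You instead prove the contrapositive: from $v_1,v_2$ in distinct components of $N^A_{\infty,*}(z)$ you isolate two type-A ring vertices $a_1,a_2$ separating them (using the neat observation that type-I and type-F ring vertices cannot be axis-adjacent), close a planarized $*$-path in $A\setminus\{z\}$ with the two radii $\overline{za_1},\overline{za_2}$ into a Jordan curve $C$, separate $v_1$ from $v_2$ by $C$, and contradict $v_1\in A^c_\infty$ via the lattice fact that axis-edges with endpoints in $A^c$ cannot meet $*$-edges with endpoints in $A$. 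Your route trades the paper's face bookkeeping for a clean Jordan-curve separation, which is arguably more transparent; the one step you should spell out more carefully is the "swap" that makes $\gamma$ planar-simple: only one of the two axis-pairings of the four corners of a crossed unit square keeps the rewired walk a single arc from $a_1$ to $a_2$ (the other pairing detaches a loop), and one should note that each swap preserves self-avoidance and strictly decreases the crossing count so the process terminates. A clean alternative that sidesteps the swap entirely is to take a simple arc from $a_1$ to $a_2$ inside the Peano continuum $\Gamma$ formed by all $*$-edges with both endpoints in $A\setminus\{z\}$; the same two lattice-geometry facts then show $C$ is Jordan and that the escape path misses it.
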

\begin{proof}
	
	When $|N_\infty^A(z)|\le 1$, it is obvious that $N_\infty^A(z)$ is connected in $N_{\infty,*}^A(z)$.

	When $|N_\infty^A(z)|\ge 2$, we arbitrarily take two vertices $v_1,v_2\in N_\infty^A(z)$. We also choose a sufficiently large integer $R$ such that $A\subset \Lambda(R)$, and then we arbitrarily take $x\in [\Lambda(R)]^c$. It follows from the definition of $N_\infty^A(z)$ that both $v_1$ and $v_2$ are connected to $x$ by $A^c$. I.e., for $i\in \{1,2\}$, there exists a path $\eta_i$ from $v_i$ to $x$ with $\boldsymbol{\mathrm{R}}(\eta_i)\subset A^c$. Let $\widetilde{\eta}:=\cup_{i=1}^2\boldsymbol{\mathrm{I}}(\{z,v_i\})\cup \boldsymbol{\mathrm{I}}(\eta_i)$. By planarity, $\mathbb{R}^2\setminus \widetilde{\eta}$ is composed of more than one connected component. In addition, among them there exist exactly two connected components (denoted by $\mathcal{C}_1$ and $\mathcal{C}_2$) that contains at least one face $\mathcal{S}$ with $z\in \boldsymbol{\mathrm{v}}(\mathcal{S})$ (since $\{z,v_1\}$ and $\{z,v_2\}$ are the only edges incident to $z$ that are traversed by $\widetilde{\eta}$). We claim that either $\mathcal{C}_1$ or $\mathcal{C}_2$ does not include any vertex in $A$. Otherwise, there exist $y_{1},y_{2}\in A\setminus \{z\}$ such that $y_i$ is contained in $\mathcal{C}_{i}$ for $i\in \{1,2\}$. Since $z$ is not a $*$-cut set, there exists a self-avoiding $*$-path $\eta_{*}$ such that $\eta_{*}(0)=y_1$, $\eta_{*}(-1)=y_2$ and $\boldsymbol{\mathrm{R}}(\eta_{*})\subset A\setminus \{z\}$. Note that for every $y\in \boldsymbol{\mathrm{R}}(\eta_{*})$, $\{\mathcal{S}:y\in \boldsymbol{\mathrm{v}}(\mathcal{S})\}$ is connected, and that for each $1\le i\le \boldsymbol{\mathrm{L}}(\eta_{*})-1$, $\{\mathcal{S}:\eta_{*}(i)\in \boldsymbol{\mathrm{v}}(\mathcal{S})\}\cap \{\mathcal{S}:\eta_{*}(i+1)\in \boldsymbol{\mathrm{v}}(\mathcal{S})\}\neq \emptyset$. Thus, the collection of faces $\{\mathcal{S}:\boldsymbol{\mathrm{v}}(\mathcal{S}) \cap \boldsymbol{\mathrm{R}}(\eta_{*})\neq \emptyset\}$ is connected, which indicates that  there exists a sequence of faces $\mathcal{S}_1,...,\mathcal{S}_k$ ($k\ge 2$) satisfying the following conditions:


	\begin{enumerate}
		\item $\mathcal{S}_1\subset \mathcal{C}_{1}$ and $\mathcal{S}_k\subset \mathcal{C}_{2}$;

		\item For any $0\le j\le k-1$, $\boldsymbol{\mathrm{e}}(\mathcal{S}_j)\cap \boldsymbol{\mathrm{e}}(\mathcal{S}_{j+1})$ contains an edge intersecting $A\setminus \{z\}$.

	\end{enumerate}
	By continuity, there exists $j_{\dagger}\in [1,k-1]$ such that $\mathcal{S}_{j_\dagger}\subset  \mathcal{C}_{1}$ and $\mathcal{S}_{j_\dagger+1}\not\subset \mathcal{C}_{1}$. It follows that the edge in   $\boldsymbol{\mathrm{e}}(\mathcal{S}_{j_\dagger})\cap \boldsymbol{\mathrm{e}}(\mathcal{S}_{j_\dagger+1})$ either is in $\big\{\{z,v_1\},\{z,v_2\} \big\}$, or is traversed by $\eta_1$ or $\eta_2$. Combined with the facts that $v_1,v_2\in N_{\infty}^A(z)\subset A^c $ and $\cup_{i=1}^2\boldsymbol{\mathrm{R}}(\eta_i)\subset A^c$, this implies that the edge in $\boldsymbol{\mathrm{e}}(\mathcal{S}_{j_\dagger})\cap \boldsymbol{\mathrm{e}}(\mathcal{S}_{j_\dagger+1})$ does not intersect $A\setminus \{z\}$, which is incompatible with Condition (2).  Now we conclude this claim by contradiction. Without loss of generality,
	assume that $\mathcal{C}_{1}$ does not contains any vertex in $A$. Let $\mathbb{S}_1^z$ be the collection of all faces $\mathcal{S}$ with $z\in \boldsymbol{\mathrm{v}}(\mathcal{S})$ and $\mathcal{S}\subset \mathcal{C}_{1}$. As shown in Figure \ref{fig:scenario_1} (where every face in $\mathbb{S}_1^z$ is colored in red), it follows from the construction that all edges $e\in \cup_{\mathcal{S}\in \mathbb{S}_1^z}\boldsymbol{\mathrm{e}}(\mathcal{S})$ with $z\notin e$ (i.e. red solid line segments) form a path $\eta_{\ddagger}$ from $v_1$ to $v_2$.

		\begin{figure}[h!]
		
		\begin{tikzpicture}[scale=0.55]
			

			
			\fill[fill=red!5] (-7,0) rectangle (-4,3);

			\node at (-7.4,3.2) [] {\color{red}$\boldsymbol{\eta_{\ddagger}}$};

			\node at (-5.5,1.5) [] {\color{red}$\boldsymbol{\mathbb{S}_1^z}$};

			\draw[blue,thick, dashed] (-4,0)--(-4,3);       
			
			\draw[blue,thick, dashed] (-4,0)--(-4,-3);

			\draw[blue,thick, dashed] (-4,0)--(-1,0);     
			
			\draw[blue,thick, dashed] (-4,0)--(-7,0);

			\draw[blue,thick, dashed] (-1,0)--(-1,3);       
			
			\draw[blue,thick, dashed] (-1,3)--(-4,3);

			\draw[blue,thick, dashed] (-1,0)--(-1,-3);      
			
			\draw[blue,thick, dashed] (-1,-3)--(-4,-3);     
			
			\draw[blue,thick, dashed] (-4,-3)--(-7,-3); 
			
			\draw[blue,thick, dashed] (-7,-3)--(-7,0);   
			
			

			\draw[->,red,thick] (-7,0)--(-7,1.5); 
			
			\draw[red,thick] (-7,1.5)--(-7,3); 
			
			\draw[->,red,thick] (-7,3)--(-5.5,3); 
			
			\draw[->,red,thick] (-5.5,3)--(-4,3); 
			
			\node at (-4,0) [circle,fill=black,inner sep=3pt]{};
			
			\node at (-3.5,-0.5) [] {$\bm{z}$};

			\node at (-4,3) [circle,fill=red,inner sep=3pt]{};
			
			\node at (-4,3.5) [] {$\bm{v_2}$};


			

			\node at (-7,0) [circle,fill=red,inner sep=3pt]{};
			
			\node at (-7.7,0) [] {$\bm{v_1}$};

%
%
%

%
%
%
%
%
%
%

			\begin{scope}[shift={(18,0)}]
				
				\fill[fill=red!5] (-7,-3) rectangle (-4,3);
				
				\fill[fill=red!5] (-4,-3) rectangle (-1,0);
				
				\node at (-7.4,3.2) [] {\color{red}$\boldsymbol{\eta_{\ddagger}}$};

				\node at (-5.5,-1.5) [] {\color{red}$\boldsymbol{\mathbb{S}_1^z}$};

				\draw[blue,thick, dashed] (-4,0)--(-4,3);       
				
				\draw[blue,thick, dashed] (-4,0)--(-1,0);

				\draw[blue,thick, dashed] (-4,0)--(-4,-3);    
				
				\draw[blue, thick,dashed] (-4,0)--(-7,0);

				\draw[blue,thick, dashed] (-1,0)--(-1,3);       
				
				\draw[blue,thick, dashed] (-1,3)--(-4,3);

				\draw[->,red,thick] (-1,0)--(-1,-1.5); 
				
				\draw[red,thick] (-1,-1.5)--(-1,-3);   
				
				\draw[red,thick, dashed] (-1,0)--(-1,-3);

				\draw[->,red,thick] (-1,-3)--(-2.5,-3); 
				
				\draw[red,thick] (-2.5,-3)--(-4,-3);

				\draw[->,red,thick] (-4,-3)--(-5.5,-3); 
				
				\draw[red,thick] (-5.5,-3)--(-7,-3); 
				
				\draw[->,red,thick] (-7,-3)--(-7,-1.5);  
				
				\draw[red,thick] (-7,-1.5)--(-7,0); 
				
				\draw[->,red,thick] (-7,0)--(-7,1.5);    
				
				\draw[red,thick] (-7,1.5)--(-7,3); 
				
				\draw[->,red,thick] (-7,3)--(-5.5,3);    
				
				\draw[red,thick] (-5.5,3)--(-4,3);

				\node at (-4,0) [circle,fill=black,inner sep=3pt]{};
				
				\node at (-3.5,-0.5) [] {$\bm{z}$};

				\node at (-4,3) [circle,fill=red,inner sep=3pt]{};
				
				\node at (-4,3.5) [] {$\bm{v_2}$};

				\node at (-1,0) [circle,fill=red,inner sep=3pt]{};
				
				\node at (-0.3,0) [] {$\bm{v_1}$};

%
%
%
%
%

%
%
%

%

%
%
%
%
%
%
%
%
%


%
%
%
%
%
%
%

			\end{scope}
			
			\begin{scope}[shift={(9,0)}]
				
				\fill[fill=red!5] (-7,-3) rectangle (-4,3);

				\node at (-7.4,3.2) [] {\color{red}$\boldsymbol{\eta_{\ddagger}}$};

				\node at (-5.5,0) [] {\color{red}$\boldsymbol{\mathbb{S}_1^z}$};

				\draw[blue,thick, dashed] (-4,0)--(-4,3);       
				
				\draw[blue,thick, dashed] (-4,0)--(-4,-3);

				\draw[blue,thick, dashed] (-4,0)--(-1,0);     
				
				\draw[blue, thick,dashed] (-4,0)--(-7,0);

				\draw[blue,thick, dashed] (-1,0)--(-1,3);       
				
				\draw[blue,thick, dashed] (-1,3)--(-4,3);

				\draw[blue,thick, dashed] (-1,0)--(-1,-3);      
				
				\draw[blue,thick, dashed] (-1,-3)--(-4,-3);

				\draw[->,red,thick] (-4,-3)--(-5.5,-3); 
				
				\draw[red,thick] (-5.5,-3)--(-7,-3);

				\draw[->,red,thick] (-7,-3)--(-7,-1.5); 
				
				\draw[red,thick] (-7,-1.5)--(-7,0);   
				
				\draw[->,red,thick] (-7,0)--(-7,1.5);   
				
				\draw[red,thick] (-7,1.5)--(-7,3);  
				
				\draw[->,red,thick] (-7,3)--(-5.5,3);    
				
				\draw[red,thick] (-5.5,3)--(-4,3);

				\node at (-4,0) [circle,fill=black,inner sep=3pt]{};
				
				\node at (-3.5,-0.5) [] {$\bm{z}$};

				\node at (-4,3) [circle,fill=red,inner sep=3pt]{};
				
				\node at (-4,3.5) [] {$\bm{v_2}$};

				\node at (-4,-3) [circle,fill=red,inner sep=3pt]{};
				
				\node at (-4,-3.5) [] {$\bm{v_1}$};

%

%

%
%
%

%
%
%
%
%
%
%
				
			\end{scope}

		\end{tikzpicture}
		
		\caption{Illustation for the path $\eta_{\ddagger}$ in different scenarios}
		
		\label{fig:scenario_1}
		
	\end{figure}
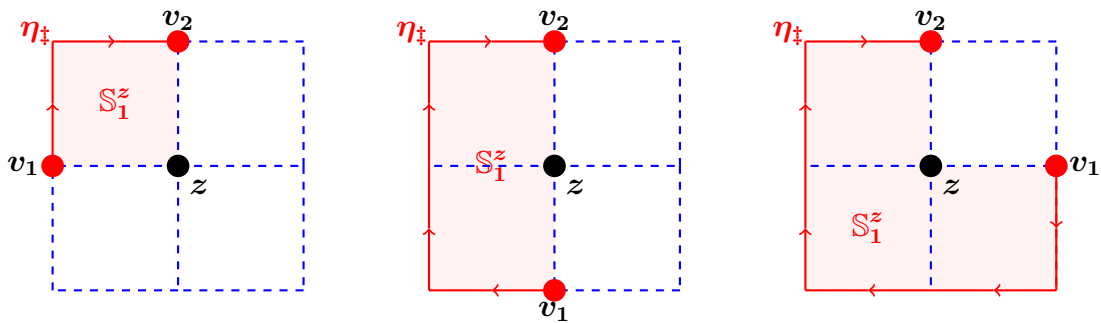

	Since $v_1,v_2\in N_\infty^A(z)$ and $\boldsymbol{\mathrm{R}}(\eta_{\ddagger})\subset N_{*}^A(z)$, one has $\boldsymbol{\mathrm{R}}(\eta_{\ddagger})\subset N_{\infty,*}^A(z)$. Therefore, $v_1$ and $v_2$ are connected by $N_{\infty,*}^A(z)$, thus concluding this lemma. \end{proof}

	Combining Lemmas \ref{lemma_cut} and \ref{lemma_cut_marginal}, we immediately obtain Lemma \ref{lemma_marginal}.

\begin{remark}\label{remark_psi_geometry}
	Lemmas \ref{lemma_marginal} and \ref{lemma_cut_marginal} fail for $\mathbb{Z}^d$ with $d\ge 3$. To see this, one may refer to the example named ``discrete Klein bottle'' in Figure \ref{fig:Klein}, where every vertex is neither a $*$-cut vertex nor a marginal vertex. This example suggests that the planarity of the graph is a key factor for the finiteness of $\psi$. Further discussions can be found in Remarks \ref{generalize_2d} and \ref{generalize_3d}.
\end{remark}

	\subsection{An upper bound for the price of removing a subset}\label{subsection_decomposition}

	To prove the finiteness of $\psi(\mathbb{Z}^2)$, we need to spatially decompose random walk trajectories. It is worth mentioning that this decomposition is valid for all $d\ge 2$.

	\begin{figure}[h!]
	\begin{tikzpicture}[scale=0.5]
		
		\draw[step=1cm,blue,thin, dotted] (-10,-6) grid (7,10);
		\node at (3,-3) [circle,fill=red,inner sep=1.5pt]{}; \node at (3.5,-3) []{$\bm{0}$};
		\node at (3,-2) [circle,fill=black,inner sep=1.5pt]{};       \node at (3, -4)[circle,fill=black,inner sep=1.5pt]{} ;\node at (2,-2) [circle,fill=black,inner sep=1.5pt]{}; \node at (4,-2) [circle,fill=black,inner sep=1.5pt]{};\node at (5,-3) [circle,fill=black,inner sep=1.5pt]{}; \node at (5,-4) [circle,fill=black,inner sep=1.5pt]{};\node at (4,-5) [circle,fill=black,inner sep=1.5pt]{};\node at (5,-5) [circle,fill=black,inner sep=1.5pt]{};\node at (6,-2) [circle,fill=black,inner sep=1.5pt]{};\node at (6,-1) [circle,fill=black,inner sep=1.5pt]{};\node at (6,0) [circle,fill=black,inner sep=1.5pt]{};\node at (6,-3) [circle,fill=black,inner sep=1.5pt]{};\node at (6,-4) [circle,fill=black,inner sep=1.5pt]{};
		\node at (6,-5) [circle,fill=black,inner sep=1.5pt]{};
		
		\node at (-1,4) [circle,fill=brown,inner sep=1.5pt]{};\node at (-1,5) [circle,fill=brown,inner sep=1.5pt]{};\node at (-2,4) [circle,fill=brown,inner sep=1.5pt]{};\node at (-2,5) [circle,fill=brown,inner sep=1.5pt]{};\node at (0,4) [circle,fill=brown,inner sep=1.5pt]{};\node at (0,5) [circle,fill=brown,inner sep=1.5pt]{};\node at (1,6) [circle,fill=black,inner sep=1.5pt]{};\node at (2,6) [circle,fill=black,inner sep=1.5pt]{};\node at (2,7) [circle,fill=black,inner sep=1.5pt]{};\node at (1,7) [circle,fill=black,inner sep=1.5pt]{};\node at (3,5) [circle,fill=black,inner sep=1.5pt]{};\node at (-3,3) [circle,fill=brown,inner sep=1.5pt]{};
		
		
		\draw[blue,thick, dashed] (0.5,4.5) arc (360:700:2.5);       
		\draw[red,thick, dashed] (0.5,4.5) arc (360:700:3.44);          
		
		\draw [magenta,thick] plot [smooth, tension=1.5] coordinates { (-10,7) (-9,6)(-9,8)(-6,7)(-7,9) (-4,6)};       
		
		\draw [magenta,thick] plot [smooth, tension=1] coordinates {(-6,3) (-7,2)(-3,-1)(-4,-2)(-6,-1)(-3,0)(-1,-2)(-1,-3.5)(0,-4)(2,-3)};
		\draw [magenta,thick] plot [smooth, tension=1] coordinates {(2,-3)(3,-3)};
		
		\draw [orange,thick] plot [smooth, tension=1] coordinates {(-4,6)(-3,5)(-3,7)(-2,6)};
	\draw [orange,thick] plot [smooth, tension=1] coordinates {(-2,6) (-2,4)};
	\draw [orange,thick] plot [smooth, tension=1] coordinates {(-2,4)(-3,4)};
	\draw [orange,thick] plot [smooth, tension=1] coordinates {(-3,4)(-4,3)(-6,3)};
	\draw [cyan,thick] plot [smooth, tension=1] coordinates {(-4,6)(-6,5)(-5,6)(-5,4)(-3,4)(-6,3)};
	
	\node at (-1,3) []{$\textcolor{brown}{D}$};
	\node at (3.5,2.5) []{$\widetilde{A}=A\setminus \textcolor{brown}{D}$};
	\node at (-4,6) [circle,fill=blue,inner sep=1pt]{};
	\node at (-4,6.5) []{$v_1$};
	\node at (-6,3) [circle,fill=blue,inner sep=1pt]{};
	\node at (-6,2.5) []{$v_2$};
	\node at (-1.5,6) []{\textcolor{blue}{$F_1$}};
	\node at (-3,7.4) []{\textcolor{red}{$F_2$}};
	\node at (-3.5,4.6) []{\textcolor{black}{$\eta_1$}};
	\node at (-5.6,4) []{\textcolor{black}{$\eta_2$}};

\end{tikzpicture}
\caption{An illustration for the decomposition. For the trajectories $\eta_1$, $\eta_2$, which are involved in $\mathbb{P}_x\left( \tau_D<\tau_{\widetilde{A}}=\tau_{\bm{0}}<\infty\right)$ and $\mathbb{P}_{v_1} \left(\tau_D<\tau_{\widetilde{A}}=\tau_{\bm{0}}<\infty \right)$ respectively, the pink paths are the common part, and the orange (resp. cyan) path belongs to $\eta_1$ (resp. $\eta_2$).}
\end{figure}

	Let $A\in \mathcal{A}(\mathbb{Z}^d)$ and $D\subset A\setminus \{\bm{0}\}$. Denote $\widetilde{A}:=A\setminus D$. We arbitrarily take $F_1\subset F_2\subset \mathbb{Z}^d$ such that $F_1\cap A=F_2\cap A=D$. For any $j\in \{1,2\}$, let $\widehat{F}_j:=[\partial^{\mathrm{i}}_{\infty} (A\cup F_j)]\setminus \widetilde{A}$ and $\widecheck{F}_j:=[\partial^{\mathrm{i}}_{\bm{0}} (A\cup F_j)]\setminus \widetilde{A}$. We choose a sufficiently large integer $R$ such that $A\cup F_2\subset \Lambda(R)$, and then we arbitrarily take $x\in [\Lambda(R)]^c$. In what follows, we present decompositions for $\mathbb{P}_x\left( \tau_D<\tau_{\widetilde{A}}=\tau_{\bm{0}}<\infty\right)$ and $\mathbb{P}_x\left( \tau_{F_1}<\tau_{A}=\tau_{\bm{0}}<\infty\right)$. By strong Markov property, one has 
	\begin{equation*}
		\begin{split}
			\mathbb{P}_x\left( \tau_D<\tau_{\widetilde{A}}=\tau_{\bm{0}}<\infty\right)=&\sum_{v_1\in \widehat{F}_1} \mathbb{P}_x\left( \tau_{A\cup F_1}=\tau_{v_1} <\infty\right) \mathbb{P}_{v_1} \left(\tau_D<\tau_{\widetilde{A}}=\tau_{\bm{0}}<\infty \right),   
		\end{split}
	\end{equation*}
	where we used the fact that for any $v_1\in A\cup F_1$, $$\mathbb{P}_x\left( \tau_{A\cup F_1}=\tau_{v_1} <\infty\right) \mathbb{P}_{v_1} \left(\tau_D<\tau_{\widetilde{A}}=\tau_{\bm{0}} <\infty\right)>0$$ if and only if $v_1\in \widehat{F}_1$. In addition, for any $v_1\in \widehat{F}_1$, we have 
	\begin{equation*}
		\mathbb{P}_{v_1} \left(\tau_D<\tau_{\widetilde{A}}=\tau_{\bm{0}} <\infty\right) = \sum_{w\in D} \mathbb{P}_{v_1} \left(\tau_A=\tau_w <\infty\right) \mathbb{P}_{w} \left(\tau_{\widetilde{A}}=\tau_{\bm{0}} <\infty\right). 
	\end{equation*}
	For any $w\in D$, by Lemma \ref{lemma2.3} (with $z=w$, $A_1=\widetilde{A}$, $A_2=A\cup F_2$ and $y=\bm{0}$),
	\begin{equation*}
		\mathbb{P}_{w} \left(\tau_{\widetilde{A}}=\tau_{\bm{0}} <\infty\right) = \sum_{v_2\in \widecheck{F}_2} G_{\widetilde{A}}(w,v_2) \mathbb{P}_{v_2}\left( \tau_{A\cup F_2}^+= \tau_{\bm{0}}<\infty \right), 
	\end{equation*}
	where we used the fact that for any $v_2\in F_2$, $G_{\widetilde{A}}(w,v_2) \mathbb{P}_{v_2}( \tau_{A\cup F_2}^+= \tau_{\bm{0}}<\infty )>0$ if and only if $v_2\in \widecheck{F}_2$. Combining these three equalities, we obtain 
	\begin{equation}\label{3.1}
		\begin{split}
			&\mathbb{P}_x\left( \tau_D<\tau_{\widetilde{A}}=\tau_{\bm{0}}<\infty\right)=\sum_{v_1\in \widehat{F}_1,v_2\in \widecheck{F}_2} \mathbb{P}_x\left(  \tau_{A\cup F_1}=\tau_{v_1} <\infty\right)\\
			&\hskip 2cm \cdot\mathbb{P}_{v_2}\left(  \tau_{A\cup F_2}^+= \tau_{\bm{0}} <\infty\right)
			   \Big[ \sum_{w\in D} \mathbb{P}_{v_1} \left( \tau_A=\tau_w <\infty\right)   G_{\widetilde{A}}(w,v_2)\Big].
		\end{split}	
	\end{equation} 
	With similar arguments, we also have 
	\begin{equation}\label{3.2}
		\begin{split}
			&\mathbb{P}_x\left( \tau_{F_1}<\tau_{A}=\tau_{\bm{0}}<\infty\right) \\
			&=\sum_{v_1\in \widehat{F}_1,v_2\in \widecheck{F}_2}  \mathbb{P}_x\left( \tau_{A\cup F_1}=\tau_{v_1} <\infty\right) \mathbb{P}_{v_2}\left(  \tau_{A\cup F_2}^+= \tau_{\bm{0}}<\infty \right)
			 G_{A}(v_1,v_2). 
		\end{split}
	\end{equation}
	By employing (\ref{3.1}) and (\ref{3.2}), we derive the following lemma, which will be used multiple times.
	\begin{lemma}\label{lemma_upper_bound}
		Keep the notations in the last paragraph. When $d=2$, we have 
		\begin{equation}
			\frac{\mathbb{H}_{\widetilde{A}}(\bm{0})}{\mathbb{H}_{A}(\bm{0})} \le \max_{v_1\in \widehat{F}_1,v_2\in \widecheck{F}_2} \frac{G_{\widetilde{A}}(v_1,v_2)}{G_{A}(v_1,v_2)}. 
		\end{equation}
	\end{lemma}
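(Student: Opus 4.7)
The plan is to express both $\mathbb{P}_x(\tau_A=\tau_{\bm 0})$ and $\mathbb{P}_x(\tau_{\widetilde A}=\tau_{\bm 0})$ as affine-linear functions of the same pair of Green-function sums using (\ref{3.1}) and (\ref{3.2}), then compare them via an elementary monotonicity inequality and pass to $|x|\to\infty$. The hypothesis $d=2$ enters solely through recurrence: since $\tau_A$ and $\tau_{\widetilde A}$ are a.s.\ finite, the conditioning in (\ref{1.1}) is trivial, so $\mathbb{H}_A(\bm 0)=\lim_{|x|\to\infty}\mathbb{P}_x(\tau_A=\tau_{\bm 0})$ and analogously for $\widetilde A$.

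First I would record two disjoint partitions. Since $\widetilde A\subset A$ forces $\tau_A\le\tau_{\widetilde A}$ and $S_{\tau_A}\in A\setminus\widetilde A=D$ whenever the inequality is strict,
\[
\{\tau_{\widetilde A}=\tau_{\bm 0}<\infty\}=\{\tau_A=\tau_{\bm 0}<\infty\}\sqcup\{\tau_D<\tau_{\widetilde A}=\tau_{\bm 0}<\infty\}.
\]
Splitting also by whether $\tau_{F_1}<\tau_A$,
\[
\mathbb{P}_x(\tau_A=\tau_{\bm 0}<\infty)=N+\mathbb{P}_x(\tau_{F_1}<\tau_A=\tau_{\bm 0}<\infty),
\]
where $N:=\mathbb{P}_x(\tau_{F_1}\ge\tau_A=\tau_{\bm 0}<\infty)\ge 0$.

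Next, identity (\ref{2.5}) with $A'=\widetilde A$ and $A\setminus A'=D$ rewrites the inner bracket of (\ref{3.1}) as $G_{\widetilde A}(v_1,v_2)-G_A(v_1,v_2)$ whenever $v_1\notin A$; for the remaining case $v_1\in\widehat F_1\cap D$ one checks directly that the bracket equals $G_{\widetilde A}(v_1,v_2)$, and such $v_1$ contribute $0$ to (\ref{3.2}) because hitting $A\cup F_1$ inside $D\subset A$ forces $\tau_{F_1}=\tau_A$. Adopting the convention $G_A(v_1,v_2):=0$ for $v_1\in D$, and writing $\alpha_{v_1}:=\mathbb{P}_x(\tau_{A\cup F_1}=\tau_{v_1}<\infty)$, $\beta_{v_2}:=\mathbb{P}_{v_2}(\tau_{A\cup F_2}^+=\tau_{\bm 0}<\infty)$, both partitions together with (\ref{3.1}) and (\ref{3.2}) collapse to
\[
\mathbb{P}_x(\tau_A=\tau_{\bm 0}<\infty)=N+\textstyle\sum_{v_1,v_2}\alpha_{v_1}\beta_{v_2}G_A(v_1,v_2),
\]
\[
\mathbb{P}_x(\tau_{\widetilde A}=\tau_{\bm 0}<\infty)=N+\textstyle\sum_{v_1,v_2}\alpha_{v_1}\beta_{v_2}G_{\widetilde A}(v_1,v_2).
\]

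Setting $M:=\max_{v_1\in\widehat F_1,v_2\in\widecheck F_2}G_{\widetilde A}(v_1,v_2)/G_A(v_1,v_2)$ (interpreted as $\infty$ when the ratio is undefined by division by zero), one has $M\ge 1$ from the monotonicity $G_{\widetilde A}\ge G_A$ (killing later). Assuming $M<\infty$ (otherwise the lemma is trivial), $\sum\alpha\beta G_{\widetilde A}\le M\sum\alpha\beta G_A$ term by term, and combining with $N\le MN$ (which uses $M\ge 1$, $N\ge 0$) gives
\[
\frac{\mathbb{P}_x(\tau_{\widetilde A}=\tau_{\bm 0}<\infty)}{\mathbb{P}_x(\tau_A=\tau_{\bm 0}<\infty)}=\frac{N+\sum\alpha\beta G_{\widetilde A}}{N+\sum\alpha\beta G_A}\le M.
\]
Letting $|x|\to\infty$ (using $\mathbb{H}_A(\bm 0)>0$ to ensure the denominator has a positive limit) transfers the bound to $\mathbb{H}_{\widetilde A}(\bm 0)/\mathbb{H}_A(\bm 0)\le M$, completing the proof. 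The only genuinely delicate step I anticipate is the bookkeeping for $v_1\in\widehat F_1\cap D$; beyond that, the whole argument reduces to the elementary fact that $(N+\sum a_i)/(N+\sum b_i)\le M$ whenever $a_i\le Mb_i$ and $M\ge 1$, $N\ge 0$.
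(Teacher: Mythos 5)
Your proof is correct and takes essentially the same route as the paper's: both exploit the disjoint decomposition $\{\tau_{\widetilde A}=\tau_{\bm 0}<\infty\}=\{\tau_A=\tau_{\bm 0}<\infty\}\sqcup\{\tau_D<\tau_{\widetilde A}=\tau_{\bm 0}<\infty\}$, the identities (\ref{3.1}), (\ref{3.2}) and (\ref{2.5}), and a termwise ratio comparison. The only differences are cosmetic: you keep the exact split $\mathbb{P}_x(\tau_A=\tau_{\bm 0}<\infty)=N+\mathbb{P}_x(\tau_{F_1}<\tau_A=\tau_{\bm 0}<\infty)$ where the paper uses the one-sided bound $\mathbb{P}_x(\tau_{F_1}<\tau_A=\tau_{\bm 0})\le\mathbb{P}_x(\tau_A=\tau_{\bm 0})$ to arrive at the ``$1+\max$'' form, you make the role of recurrence (hence $d=2$) explicit, and you explicitly flag the corner case $v_1\in\widehat F_1\cap D$ that the paper passes over silently (and which in fact never arises in any of the paper's applications of this lemma, since $F_1$ is always chosen so that $\widehat F_1\cap D=\emptyset$).
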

	\begin{proof}
		Combining the facts that $\mathbb{P}_x\left(\tau_{\widetilde{A}}=\tau_{\bm{0}} \right)=\mathbb{P}_x\left(\tau_{A}=\tau_{\bm{0}} \right)+\mathbb{P}_x\left(\tau_{D}<\tau_{\widetilde{A}}= \tau_{\bm{0}} \right)$ and $\mathbb{P}_x\left(\tau_{F_1}<\tau_{A}= \tau_{\bm{0}} \right)\le \mathbb{P}_x\left(\tau_{A}=\tau_{\bm{0}} \right)$, we have 
		\begin{equation}\label{3.4}
			\frac{\mathbb{P}_x\left(\tau_{\widetilde{A}}=\tau_{\bm{0}} \right)}{\mathbb{P}_x\left(\tau_{A}=\tau_{\bm{0}} \right)} \le  1+  \frac{\mathbb{P}_x\left( \tau_D<\tau_{\widetilde{A}}=\tau_{\bm{0}}\right)}{\mathbb{P}_x\left(\tau_{F_1}<\tau_{A}= \tau_{\bm{0}} \right)}. 
		\end{equation}
		Putting (\ref{3.1}), (\ref{3.2}) and (\ref{3.4}) together, we obtain 
		\begin{equation*}
			\begin{split}
				\frac{\mathbb{P}_x\left(\tau_{\widetilde{A}}=\tau_{\bm{0}} \right)}{\mathbb{P}_x\left(\tau_{A}=\tau_{\bm{0}} \right)} \le &1+ \max_{v_1\in \widehat{F}_1,v_2\in \widecheck{F}_2} \frac{\sum_{w\in D} \mathbb{P}_{v_1} \left(\tau_A=\tau_w \right)  G_{\widetilde{A}}(w,v_2)}{G_{A}(v_1,v_2)}\\
				=& \max_{v_1\in \widehat{F}_1,v_2\in \widecheck{F}_2} \frac{G_{\widetilde{A}}(v_1,v_2)}{G_{A}(v_1,v_2)}\ \ \ \ \ (\text{by}\ (\ref{2.5})). 
			\end{split}	
		\end{equation*}
		By taking the limit as $|x|\to \infty$, we conclude the desired bound.
	\end{proof}

	Next, we provide some technical results to facilitate the estimation of $\frac{G_{\widetilde{A}}(v_1,v_2)}{G_{A}(v_1,v_2)}$. For a given $v_2$, let $\mathring{v}_1$ be the vertex in $\widehat{F}_1\cup \widecheck{F}_1$ that maximizes $G_{\widetilde{A}}(\cdot, v_2)$ (if there is more than one choice, we select one of them in a predetermined manner). 
	\begin{lemma}\label{lemma_tilde_v1}
		For any $v_1\in \widehat{F}_1$ and $v_2\in \widecheck{F}_2$, we have 
		\begin{equation}\label{tilde_v1_e1}
			G_{\widetilde{A}}(v_1,v_2) \le \left[ 1-\mathbb{P}_{\mathring{v}_1}\left(\tau_D<\tau_{\widetilde{A}} \right) \right]^{-1} G_A(\mathring{v}_1,v_2). 
		\end{equation}
		Moreover, 
		\begin{equation}\label{tilde_v1_e2}
				G_{\widetilde{A}}(v_1,v_2) \le \left[ 1-\mathbb{P}_{\mathring{v}_1}\left(\tau_D<\tau_{\widetilde{A}} \right) \right]^{-1}\left[\mathbb{P}_{v_1}\left(\tau_{v_2}<\tau_{A} \right)  \right] ^{-1} G_A(v_1,v_2). 
		\end{equation}
	\end{lemma}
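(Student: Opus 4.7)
My plan is to derive (\ref{tilde_v1_e1}) first, and then deduce (\ref{tilde_v1_e2}) from it by an application of (\ref{2.4}). Since $v_1 \in \widehat{F}_1 \subset \widehat{F}_1 \cup \widecheck{F}_1$, the defining property of $\mathring{v}_1$ gives $G_{\widetilde{A}}(v_1, v_2) \le G_{\widetilde{A}}(\mathring{v}_1, v_2)$, so it suffices to show $G_{\widetilde{A}}(\mathring{v}_1, v_2) \le (1 - p)^{-1} G_A(\mathring{v}_1, v_2)$, where $p := \mathbb{P}_{\mathring{v}_1}(\tau_D < \tau_{\widetilde{A}})$. For (\ref{tilde_v1_e2}), (\ref{2.4}) applied to both $G_A(\mathring{v}_1, v_2)$ and $G_A(v_1, v_2)$ gives $G_A(\mathring{v}_1, v_2) \le [\mathbb{P}_{v_1}(\tau_{v_2} < \tau_A)]^{-1} G_A(v_1, v_2)$; substituting this into (\ref{tilde_v1_e1}) yields (\ref{tilde_v1_e2}).

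The degenerate case $\mathring{v}_1 \in D$ gives $p = 1$ and renders (\ref{tilde_v1_e1}) trivial, so I may assume $\mathring{v}_1 \in A^c$. Then (\ref{2.5}) with $A' = \widetilde{A}$ and $x = \mathring{v}_1$ yields $G_{\widetilde{A}}(\mathring{v}_1, v_2) = G_A(\mathring{v}_1, v_2) + \sum_{w \in D} \mathbb{P}_{\mathring{v}_1}(\tau_A = \tau_w < \infty) G_{\widetilde{A}}(w, v_2)$. Since $\sum_{w \in D} \mathbb{P}_{\mathring{v}_1}(\tau_A = \tau_w < \infty) = p$, the bound $(1 - p) G_{\widetilde{A}}(\mathring{v}_1, v_2) \le G_A(\mathring{v}_1, v_2)$, equivalent to (\ref{tilde_v1_e1}), follows once I establish the key inequality $G_{\widetilde{A}}(w, v_2) \le G_{\widetilde{A}}(\mathring{v}_1, v_2)$ for every $w \in D$.

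The main obstacle is proving this key inequality, which I handle by a geometric case split exploiting the monotonicity $(A \cup F_2)^c_{\bm{0}} \subset (A \cup F_1)^c_{\bm{0}}$ coming from $F_1 \subset F_2$. If $v_2 \in A \cup F_1$, the neighbor of $v_2$ witnessing $v_2 \in \widecheck{F}_2$ lies in $(A \cup F_2)^c_{\bm{0}}$ and hence in $(A \cup F_1)^c_{\bm{0}}$, placing $v_2$ itself in $\widecheck{F}_1$; by the maximality of $\mathring{v}_1$ and (\ref{2.4}), $G_{\widetilde{A}}(\mathring{v}_1, v_2) \ge G_{\widetilde{A}}(v_2, v_2) \ge G_{\widetilde{A}}(w, v_2)$. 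If $v_2 \notin A \cup F_1$, the same monotonicity forces $v_2 \in (A \cup F_1)^c_{\bm{0}}$, and any trajectory from $w \in D \subset A \cup F_1$ reaching $v_2$ before $\widetilde{A}$ must, at its first entry to $(A \cup F_1)^c_{\bm{0}}$, step from a vertex in $\widecheck{F}_1$; the three alternatives---that this predecessor lies in $\widetilde{A}$ (contradicting non-killing), already in $(A \cup F_1)^c_{\bm{0}}$ (contradicting minimality), or in another connected component of $(A \cup F_1)^c$ (contradicting adjacency)---are all excluded. Strong Markov applied at this first crossing gives $G_{\widetilde{A}}(w, v_2) = \sum_{u \in \widecheck{F}_1} \mathbb{P}_w(\tau_{\widecheck{F}_1} = \tau_u < \tau_{\widetilde{A}}) G_{\widetilde{A}}(u, v_2) \le G_{\widetilde{A}}(\mathring{v}_1, v_2)$, completing the argument.
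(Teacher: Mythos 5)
Your proof is correct and follows essentially the same route as the paper: apply (\ref{2.5}) at $\mathring{v}_1$, establish $G_{\widetilde{A}}(w,v_2)\le G_{\widetilde{A}}(\mathring{v}_1,v_2)$ for each $w\in D$ via a geometric claim that the walk must pass through $\widecheck{F}_1$, solve the resulting self-bounding inequality for (\ref{tilde_v1_e1}), and then obtain (\ref{tilde_v1_e2}) from (\ref{2.4}) through $G_A(\mathring{v}_1,v_2)\le G_A(v_2,v_2)=[\mathbb{P}_{v_1}(\tau_{v_2}<\tau_A)]^{-1}G_A(v_1,v_2)$. The only cosmetic differences are that you decompose at the \emph{first} entry into $(A\cup F_1)^c_{\bm 0}$ while the paper uses the \emph{last} exit from $A\cup F_1$ (both yield the same identity), and you explicitly flag the degenerate alternative $\mathring{v}_1\in D$, which the paper leaves implicit.
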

	\begin{proof}
	By applying (\ref{2.5}) with $A'=\widetilde{A}$, $x=\mathring{v}_1$ and $y=v_2$, we have  
	\begin{equation}\label{new_3.6}
		\begin{split}
			G_{\widetilde{A}}(\mathring{v}_1,v_2) =  G_{A}(\mathring{v}_1, v_2) + \sum_{w\in D}\mathbb{P}_{\mathring{v}_1} \left(\tau_{D}=\tau_{w}< \tau_{\widetilde{A}} \right) G_{\widetilde{A}}(w, v_2).  
		\end{split}	
	\end{equation}
	We claim that any path $\eta$ from $w\in D$ to $v_2\in \widecheck{F}_2$ without hitting $\widetilde{A}$ must intersect $\widecheck{F}_1$. In what follows, we prove this claim separately in two cases: 
	\begin{enumerate}
		\item When $v_2\in F_1$, since $F_1\subset F_2$, one has $(A\cup F_2)^{c}_{\bm{0}}\subset(A\cup F_1)^{c}_{\bm{0}}$ and therefore, $v_2\in \widecheck{F}_2\cap F_1\subset \widecheck{F}_1$, which indicates that $\eta$ intersects $\widecheck{F}_1$.

		\item When $v_2\in F_2\setminus F_1$, we decompose $\eta$ into two sub-paths $\eta_1:=(\eta(0),...,\eta(\ell))$ and $\eta_2:=(\eta(\ell+1),...,\eta(\boldsymbol{\mathrm{L}}(\eta)))$, where $\ell$ is the last time when $\eta$ is in $A\cup F_1$. Since $\boldsymbol{\mathrm{R}}(\eta_2)\subset (A\cup F_1)^c$ and $\eta_2(-1)=v_2\in \widecheck{F}_2\subset (A\cup F_1)^c_{\bm{0}}$, we have $\eta(\ell+1)\in (A\cup F_1)^c_{\bm{0}}$ and therefore, $\eta(\ell)\in \widecheck{F}_1$. Thus, $\eta$ intersects $\widecheck{F}_1$.

	\end{enumerate}
	With the claim above confirmed and by strong Markov property, we have
		\begin{equation*}
		G_{\widetilde{A}}(w, v_2) = \sum_{w'\in \widecheck{F}_1}\mathbb{P}_w\left(\tau_{\widecheck{F}_1}=\tau_{w'}<\infty\right) G_{\widetilde{A}}(w', v_2).  
	\end{equation*}
	Therefore, by the maximality of $G_{\widetilde{A}}(\mathring{v}_1, v_2)$, we get $$G_{\widetilde{A}}(w, v_2)\le \max_{w'\in \widecheck{F}_1} G_{\widetilde{A}}(w', v_2) \le G_{\widetilde{A}}(\mathring{v}_1, v_2), \ \forall w\in D.$$
	Combined with (\ref{new_3.6}), it yields that 
	\begin{equation}\label{new_3.7}
		G_{\widetilde{A}}(\mathring{v}_1,v_2) \le \left[1-\mathbb{P}_{\mathring{v}_1}\left(\tau_D<\tau_{\widetilde{A}} \right) \right]^{-1} G_A(\mathring{v}_1,v_2).  
	\end{equation}
By (\ref{new_3.7}) and the fact that $G_{\widetilde{A}}(v_1,v_2) \le G_{\widetilde{A}}(\mathring{v}_1,v_2)$, we conclude (\ref{tilde_v1_e1}).

It follows from (\ref{2.4}) that $$G_A(\mathring{v}_1,v_2)\le G_A(v_2,v_2)=\left[\mathbb{P}_{v_1}\left(\tau_{v_2}<\tau_{A} \right)  \right] ^{-1} G_A(v_1,v_2).$$ 
Combined with (\ref{tilde_v1_e1}), it implies (\ref{tilde_v1_e2}) and completes the proof.
	\end{proof}

	\subsection{Proof of Theorem \ref{thm1} for $\mathbb{Z}^2$}\label{subsection_proof_psi_finite}
	
	Arbitrarily take $A\in \mathcal{A}(\mathbb{Z}^2)$. To conclude Theorem \ref{thm1}, it suffices to show that we can select a vertex $z_\dagger\in A$ such that $\rho_A(z_{\dagger})\le C$ (for some prefixed constant $C>1$ independent of $A$). If there exists $z'\in A\setminus \{\bm{0}\}$ such that $N^A_{\infty}(z')=\emptyset$, then one has $\rho_A(z')=1$ (since $\mathbb{H}_A(z')=0$) and therefore, we can take $z_{\dagger}=z'$. Moreover, $N^A_{\infty}(\bm{0})\neq\emptyset$ since $A\in \mathcal{A}(\mathbb{Z}^2)$. Thus, without loss of generality, we thereafter assume that $N^A_{\infty}(z)\neq \emptyset$ for all $z\in A$. Here is our strategy for finding the appropriate vertex $z_\dagger$ to remove: 
	\begin{enumerate}
		\item When $A$ is $*$-connected, we first define $z''$ as an arbitrary vertex in $N^A_{\infty}(\bm{0})$. Let $\eta_{\diamond}$ be the path that starts from $z''$, moves along $N_*(\bm{0})$ in the clockwise direction and ends when it intersects $A$ (this intersection must happen since $N_*(\bm{0})\cap A\neq \emptyset$, which is ensured by $|A|\ge 2$ and the $*$-connectivity of $A$). Then we take $z_{\diamond}:= \eta_{\diamond}(-1)$. There are two subcases as follows.

		\begin{enumerate}
			\item    If $z_\diamond$ is not a $*$-cut vertex of $A$ (and therefore is a marginal vertex of $A$ by Lemma \ref{lemma_cut_marginal}), then we take $z_{\dagger}=z_{\diamond}$.

			\item    Otherwise (i.e. $z_\diamond$ is a $*$-cut vertex), we denote by $A_{\diamond}$ the $*$-cluster of $A\setminus \{z_{\diamond}\}$ that contains $\bm{0}$. Then we define $z_\dagger$ as an arbitrary marginal vertex of $A$ in $A\setminus A_{\diamond}$ (the existence of $z_\dagger$ is ensured by Item (2) of Lemma \ref{lemma_marginal}).

		\end{enumerate}

		\item When $A$ is not $*$-connected and there exists some $*$-cluster $A'$ of $A\setminus A_{(\bm{0})}$ with $|A'|\ge 2$ (recall that $A_{(\bm{0})}$ is the $*$-cluster of $A$ that contains $\bm{0}$), we choose $z_\dagger$ as an arbitrary marginal vertex of $A'$.

		\item When $A$ is not $*$-connected and every $*$-cluster of $A \setminus A_{(\bm{0})}$ contains only one vertex, let $z_{\dagger}$ be an arbitrary vertex in $A\setminus A_{(\bm{0})}$.

	\end{enumerate}

	Before bounding $\rho_A(z_{\dagger})$, let us give some observations for the selection of $z_\dagger$ in different cases.

	In Case (1a), let $z_{\ddagger}:=\eta_{\diamond}(\boldsymbol{\mathrm{L}}(\eta_{\diamond})-1)$ be the second last vertex of $\eta_{\diamond}$. Note that $z_{\ddagger}\in N^A_{\infty}(z_\dagger)$. Moreover, when $z_{\ddagger}\in N(\bm{0})$, one has $\mathbb{P}_{z_{\ddagger}}(\tau_A=\tau_{\bm{0}})\ge \tfrac{1}{4}$; when $z_{\ddagger}\in N_*(\bm{0})\setminus N(\bm{0})$, the random walk from $z_{\ddagger}$ may hit $A$ at $\bm{0}$ in two steps (since it can move counter-clockwise along $\eta_{\diamond}$ to reach $N(\bm{0})$ and then get to $\bm{0}$) and therefore, $\mathbb{P}_{z_{\ddagger}}(\tau_A=\tau_{\bm{0}})\ge \tfrac{1}{16}$. Thus, there exists $z_{\ddagger}\in N^A_{\infty}(z_\dagger)$ such that $\mathbb{P}_{z_{\ddagger}}(\tau_A=\tau_{\bm{0}})\ge \tfrac{1}{16}$.

	In Case (1b), we denote by $A_{\dagger}$ the $*$-cluster of $A\setminus \{z_{\diamond}\}$ that contains $z_\dagger$. We claim that for any finite cluster $D$ of $A^c$ with $\bm{0}\in \partial^{\mathrm{o}}_\infty D$, one has $A_{\dagger}\cap \partial^{\mathrm{o}}_\infty D=\emptyset$. Note that $\partial^{\mathrm{o}}_\infty D\subset A$. In fact, $D$ satisfies all conditions in Lemma \ref{lemma_connectivity_new} (recalling the assumption that $N^A_{\infty}(z)\neq \emptyset$ for all $z\in A$). Therefore, by Lemmas \ref{lemma_connectivity} and \ref{lemma_connectivity_new}, $(\partial^{\mathrm{o}}_\infty D)\setminus \{z_\diamond\}$ is $*$-connected (note that $z_\diamond$ is not necessarily in $\partial^{\mathrm{o}}_\infty D$). Thus, by the maximality of $*$-cluster, it follows that either $[(\partial^{\mathrm{o}}_\infty D)\setminus \{z_\diamond\}] \cap A_\dagger=\emptyset$ or $(\partial^{\mathrm{o}}_\infty D)\setminus \{z_\diamond\}\subset A_\dagger$, where the latter scenario is false since $\bm{0}\in (\partial^{\mathrm{o}}_\infty D)\setminus \{z_\diamond\}$ and $\bm{0}\notin A_\dagger$. With the claim above proved, one may have that any path from $N^{A}(z_\dagger)$ to $N^{A}(\bm{0})$ without intersecting $A$ must be contained in $A^c_\infty$. I.e., $N^{A}_{\bm{0}}(z_\dagger)\subset N^{A}_\infty(z_\dagger)$. Moreover, since $A\in \mathcal{A}(\mathbb{Z}^d)$, one has $A_\infty^c\subset A_{\bm{0}}^c$ and therefore, $N^{A}_{\bm{0}}(z_\dagger)  \supset N^{A}_\infty(z_\dagger)$. Thus, we obtain $N^{A}_{\bm{0}}(z_\dagger)=N^{A}_\infty(z_\dagger)$.

	In Case (2), since the $*$-cluster $A'$ (which contains $z_\dagger$) satisfies $\partial_{\infty}^{\mathrm{o}}A'\neq \emptyset$ and is not $*$-connected to $A_{(\bm{0})}$, one has $A'\subset [A_{(\bm{0})}]_{\infty}^c$. For the same reason as in Case (1b), any path from $N^{A}(z_\dagger)$ to $N(\bm{0})$ without intersecting $A$ must begin in $N^{A}_\infty(z_\dagger)$, and be contained in $A_\infty^c$. Thus, we have  $N^{A}_\infty(z_\dagger)=N^{A}_{\bm{0}}(z_\dagger)$.

	To sum up, the selection of $z_\dagger$ can be categorized into the following types: 
	\begin{enumerate}[(i)]
		
		\item $z_\dagger$ is a marginal vertex of $A$ with $N_*(z_\dagger)\cap A\neq \emptyset$ such that there exists $z_\ddagger\in N^A_{\infty}(z_\dagger)$ satisfying $\mathbb{P}_{z_{\ddagger}}\left(\tau_A=\tau_{\bm{0}} \right)\ge \tfrac{1}{16}$ (included in Case (1a));

		\item $z_\dagger$ is a marginal vertex of some cluster of $A$ such that $N_*(z_\dagger)\cap A\neq \emptyset$ and $N_\infty^A(z_\dagger)=N_{\bm{0}}^A(z_\dagger)$ (included in Cases (1b) and (2));

		\item $z_\dagger$ is a $*$-isolated vertex in $A$. I.e. $N_*(z_\dagger)\cap A=\emptyset$ (included in Case (3)).
		
	\end{enumerate}

	\begin{remark}
		At first glance, we may choose $z_{\dagger}$ as an arbitrary marginal vertex, without the need to separately consider Type (i) and Type (ii). However, the price of removing an arbitrary marginal vertex may still be unbounded. In the example shown by Figure \ref{fig:marginal}, recall that $z_3$ and $z_4$ are both marginal vertices of $A_{(\bm{0})}$ (i.e. the $*$-cluster of $A$ that contains $\bm{0}$). However, removing $z_4$ from $A$ will create a short-cut for the random walk to hit $\bm{0}$, thereby causing a high price. Our strategy suggests removing $z_3$ instead, which costs only a constant price.

	\end{remark}

	In the remaining part of this section, we establish $\rho_A(z_{\dagger})\le C$ separately for different types of $z_\dagger$. 
	
	\textbf{Price of removing a vertex $z_\dagger$ of Type $\mathrm{(i)}$:} We take a sufficiently large $R$ such that $A\subset \Lambda(R)$ and then let $x$ be an arbitrary vertex in $[\Lambda(R)]^c$. By strong Markov property and the fact that $x\in A^c_{\infty}$, we have 
	\begin{equation}\label{3.5}
		\mathbb{P}_x\left( \tau_{z_\dagger} <  \tau_{A\setminus \{z_\dagger\}}=\tau_{\bm{0}}  \right)  \le   \sum_{v\in N_\infty^A(z_\dagger) }	\mathbb{P}_x\left( \tau_{A\cup N_\infty^A(z_\dagger) } = \tau_v \right).
	\end{equation}
	For each $v\in N_\infty^A(z_\dagger)$, since $z_\dagger$ is marginal (i.e. $N^A_\infty(z_\dagger)$ is connected in $N^A_{\infty,*}(z_\dagger)$), the random walk may move along $N_{\infty,*}^A(z_\dagger)$ from $v$ to $z_\ddagger$ within six steps (recalling $z_\ddagger$ in the definition of Type (i) $z_\dagger$), which implies that 
	\begin{equation}\label{3.6}
	\mathbb{P}_v(\tau_{z_\ddagger}<\tau_A)\ge \frac{1}{4^{6}},\ \forall v\in N_\infty^A(z_\dagger). 
	\end{equation}
	Combined with the fact that $\mathbb{P}_{z_{\ddagger}}\left(\tau_A=\tau_{\bm{0}} \right)\ge \tfrac{1}{16}$, it yields that 
		\begin{equation*}
		\begin{split}
			\mathbb{P}_x\left( \tau_{A}=\tau_{\bm{0}}\right) \ge& \mathbb{P}_x\left( \tau_{A\cup N_\infty^A(z_\dagger) } = \tau_v \right)  \mathbb{P}_v\left(\tau_{z_\ddagger}<\tau_A \right) \mathbb{P}_{z_\ddagger}\left(\tau_A=\tau_{\bm{0}} \right)\\
			\ge &\frac{1}{4^8}\cdot  \mathbb{P}_x\left( \tau_{A\cup N_\infty^A(z_\dagger) } = \tau_v \right) .
		\end{split}
	\end{equation*}
	Summing over $v\in N^A_\infty(z_\dagger)$, by (\ref{3.5}) and $|N^A_\infty(z_\dagger)|\le 4$ we get that
	\begin{equation}\label{3.7}
		\mathbb{P}_x\left( \tau_{z_\dagger} <  \tau_{A\setminus \{z_\dagger\}}=\tau_{\bm{0}}  \right) \le 4^9 \cdot  \mathbb{P}_x\left( \tau_{A}=\tau_{\bm{0}}\right). 
	\end{equation}
	By (\ref{3.7}) and $\mathbb{P}_x( \tau_{A\setminus \{z_\dagger\}}=\tau_{\bm{0}})=\mathbb{P}_x( \tau_{A}=\tau_{\bm{0}})+\mathbb{P}_x( \tau_{z_\dagger} <  \tau_{A\setminus \{z_\dagger\}}=\tau_{\bm{0}})$, we have that 
	\begin{equation*}
		\mathbb{P}_x\left(  \tau_{A\setminus \{z_\dagger\}}=\tau_{\bm{0}}\right) \le (4^9+1)\cdot \mathbb{P}_x\left( \tau_{A}=\tau_{\bm{0}}\right). 
	\end{equation*}
	Letting $|x|\to \infty$, we obtain $\rho_A(z_\dagger)\le 4^9+1$.

	Now we consider the cases where the vertex $z_\dagger$ is of Type (ii) or Type (iii). To conclude $\rho_A(z_\dagger)\le C$, by Lemma \ref{lemma_upper_bound} (with $D=\{z_\dagger\}$ and $\widetilde{A}=A\setminus \{z_\dagger\} $), it suffices to prove that
	\begin{equation}\label{3.8}
		G_{A\setminus \{z_\dagger\}}(v_1,v_2)\le C\cdot G_{A}(v_1,v_2),\ \forall v_1\in \widehat{F}_1\ \text{and} \ v_2\in \widecheck{F}_2, 
	\end{equation}
	where $F_1,F_2\subset \mathbb{Z}^2$ are some finite sets with $F_j\cap A=\{z_\dagger\}$ for $j\in \{1,2\}$ (recalling that $\widehat{F}_j:=[\partial^{\mathrm{i}}_{\infty} (A\cup F_j)]\setminus \widetilde{A}$ and $\widecheck{F}_j:=[\partial^{\mathrm{i}}_{\bm{0}} (A\cup F_j)]\setminus \widetilde{A}$ ). We will determine $F_1$ and $F_2$ according to the type of $z_\dagger$. Recall that $\mathring{v}_1 $ maximizes $G_{\widetilde{A}}(\cdot, v_2)$ in $\widehat{F}_1\cup \widecheck{F}_j$.

	\textbf{Price of removing a vertex $z_\dagger$ of Type $\mathrm{(ii)}$:} In this case, we set $F_1=F_2=\{z_\dagger\}\cup N^A(z_\dagger)$. For $j\in \{1,2\}$, it is straightforward
	 that $\widehat{F}_j \subset N^A_\infty(z_\dagger)$. Moreover, we have $\widecheck{F}_j\subset N_{\bm{0}}^A(z_\dagger)=N_{\infty}^A(z_\dagger)$ (by the definition of Type (ii) $z_\dagger$). Therefore, $v_1,\mathring{v}_1,v_2\in N^A_\infty(z_\dagger)$. Since $N_*(z_\dagger)\cap A\neq \emptyset$, the random walk starting from $\mathring{v}_1\in N_\infty^A(z_\dagger)$ may reach $A\setminus \{z_\dagger\}$ within four steps, which implies that $1-\mathbb{P}_{\mathring{v}_1}(\tau_{z_\dagger}<\tau_{A\setminus \{z_\dagger\}} )=\mathbb{P}_{\mathring{v}_1}(\tau_{A\setminus \{z_\dagger\}}<\tau_{z_\dagger} )\ge \tfrac{1}{4^4}$ (note that for the equality, we used the recurrence of the random walk on $\mathbb{Z}^2$). In addition, for the same reason as proving (\ref{3.6}), we have $\mathbb{P}_{v_1}\left(\tau_{v_2}<\tau_{A} \right)\ge \frac{1}{4^6}$. Thus, by (\ref{tilde_v1_e2}), we conclude (\ref{3.8}) with $C=4^{10}$.

	\textbf{Price of removing a vertex $z_\dagger$ of Type $\mathrm{(iii)}$:} Let $R:=\boldsymbol{\mathrm{D}}(z_\dagger,A\setminus \{z_\dagger\})$. When $1<R\le  C_6:=\lceil\max\{100,C_3(\frac{1}{4},\frac{1}{2},1,2 ),C_4\}\rceil$ (recall $C_3$ and $C_4$ in Lemmas \ref{lemma_compare_green} and \ref{lemma_hit_distant} respectively), let $F_1=F_2=\{z_\dagger\}\cup N(z_\dagger)$. It is easy to see that $\widehat{F}_j,\widecheck{F}_j\subset N(z_\dagger)$ for all $j\in \{1,2\}$. For any $v_1\in\widehat{F}_1\subset N(z_\dagger),v_2\in \widecheck{F}_2\subset N(z_\dagger)$, the random walk starting from $v_1$ can get $v_2$ along $N_*(z_\dagger)$ within $4$ steps (recall that $N_*(z_\dagger)\cap A=\emptyset$). Moreover, for any $\mathring{v}_1\in \widehat{F}_1\cup \widecheck{F}_1\subset N(z_\dagger)$, since $R\le C_6$, we know that the random walk starting from $\mathring{v}_1$ may reach $A\setminus \{z_{\dagger}\}$ within $C_6+3$ steps without hitting $z_\dagger$ (note that there exists some vertex $w\in N(z_{\dagger})$ such that $\mathbf{\boldsymbol{\mathrm{D}}}(w,A\setminus \{z_\dagger\})=R-1\le C_6-1$, and that the random walk may move from $\mathring{v}_1$ to $w$ along $N_*(z_\dagger)$ within $4$ steps). These two facts indicate $\mathbb{P}_{v_1}\left(\tau_{v_2}<\tau_{A} \right)\ge \frac{1}{4^4}$ and $\mathbb{P}_{\mathring{v}_1}(\tau_{A\setminus \{z_\dagger\}}<\tau_{z_\dagger} )\ge  \tfrac{1}{4^{C_6+3}}$ respectively. Thus, by (\ref{tilde_v1_e2}), we conclude (\ref{3.8}) with $C=4^{C_6+7}$.

	Now we focus on the case when $R> C_6$. We set $F_1=\mathbf{B}_{z_\dagger}(\frac{R}{4})$ and $F_2=\mathbf{B}_{z_\dagger}(\frac{R}{2})$. It follows that $\widehat{F}_1=\widecheck{F}_1=\partial^{\mathrm{i}}\mathbf{B}_{z_\dagger}(\frac{R}{4})$ and $\widehat{F}_2=\widecheck{F}_2=\partial^{\mathrm{i}}\mathbf{B}_{z_\dagger}(\frac{R}{2})$. Hence, one has $v_1,\mathring{v}_1\in \partial^{\mathrm{i}}\mathbf{B}_{z_\dagger}(\frac{R}{4})$ and $v_2\in \partial^{\mathrm{i}}\mathbf{B}_{z_\dagger}(\frac{R}{2})$. By (\ref{tilde_v1_e1}), we have 
	\begin{equation}\label{new_3.13}
		G_{A\setminus \{z_{\dagger}\}}(v_1, v_2)\le \left[\mathbb{P}_{\mathring{v}_1} \left( \tau_{A\setminus \{z_\dagger\}}<\tau_{z_\dagger} \right) \right]^{-1} G_{A}(\mathring{v}_1, v_2). 
	\end{equation}
	We arbitrarily take $z_{\#}\in A\cap \partial^{\mathrm{i}} \mathbf{B}_{z_\dagger}(R)$. By Lemma \ref{lemma_hit_distant}, we have 
	\begin{equation}\label{new_3.14}
		\mathbb{P}_{\mathring{v}_1} \left( \tau_{A\setminus \{z_\dagger\}}<\tau_{z_\dagger} \right) \ge \mathbb{P}_{\mathring{v}_1} \left( \tau_{z_{\#} }<\tau_{z_\dagger} \right) \ge c_4. 
	\end{equation}
	Moreover, by Lemma \ref{lemma_compare_green}, we also have 
	\begin{equation}\label{new_3.15}
	G_{A}(v_1, v_2)\ge c_3(\tfrac{1}{4},\tfrac{1}{2},1,2 )\cdot 	G_{A}(\mathring{v}_1, v_2).
	\end{equation}
	Combining (\ref{new_3.13}), (\ref{new_3.14}) and (\ref{new_3.15}), we conclude (\ref{3.8}) with $C=c_3^{-1}c_4^{-1}$ and finally complete the proof of $\psi(\mathbb{Z}^2)<\infty$.      \qed

	\begin{remark}[Generalization of Theorem \ref{thm1} for $\mathbb{Z}^2$] \label{generalize_2d}
		
		In the proof of $\psi(\mathbb{Z}^2)<\infty$ presented in this section, the existence of marginal vertices (Lemma \ref{lemma_marginal}) only relies on the planality of the graph. Moreover, for the estimates for the price of removing a vertex $z_{\dagger}$ of Type $\mathrm{(i)}$ and Type $\mathrm{(ii)}$, what essentially matters is that the probability for the random walk to surround each face is bounded away from $0$, which can be easily ensured when $\sup_{v}\mathrm{deg}(v)$ and $\sup_{\mathcal{S}}|\boldsymbol{\mathrm{e}}(\mathcal{S})|$ are both finite. Additionally, to bound the price of removing a vertex $z_{\dagger}$ of Type $\mathrm{(iii)}$, the essntial properties we rely on are summarized in Lemmas \ref{lemma_compare_green} and \ref{lemma_hit_distant}, whose proofs are based on the invariance principle. For some sufficient conditions of these properties, readers may refer to \cite[Sections 4.3 and 7.2]{barlow2017random}. To summarize, we expect that the vertex-removal stability holds for all planar graphs of bounded degree and bounded number of edges surrounding every face, where the random walk from each vertex converges to a Brownian motion under a uniform rate.  
		
	\end{remark}

	\section{Absence of vertex-removal stability for $\mathbb{Z}^d$ ($d\ge 3$)}\label{section_psi_infinite}

		This section includes the proof of $\psi(\mathbb{Z}^d)=\infty$ for $d\ge 3$. The key to achieve this is the following proposition: 
		
		\begin{proposition}\label{prop_1}
			In the case of $\mathbb{Z}^d$ for $d\ge 3$, there exists $c_6(d)>0$ and a sequence of sets $\{K_n\}_{n\ge 1}$ with $K_n\in \mathcal{A}(\mathbb{Z}^d)$ and $|K_n|=k_n$ (where $\{k_n\}_{n\ge 1}$ is an increasing sequence of integers) such that for any large enough $n$ and any $z\in K_n\setminus \{\bm{0}\}$, 
			\begin{equation}\label{4.1}
				\frac{\mathrm{Es}_{K_n\setminus \{z\}}(\bm{0})}{\mathrm{Es}_{K_n}(\bm{0})}\ge e^{c_6n}. 
			\end{equation}
		\end{proposition}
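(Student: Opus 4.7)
The plan is to construct $K_n$ as a discrete ``Klein bottle'' shape in $\mathbb{Z}^d$ ($d\ge 3$): a closed, non-orientable surface-like structure of $\Theta(n)$ vertices, arranged so that $\bm{0}$ is a distinguished vertex of the shell, the ``interior'' of the shell is a long thin corridor of length $\Theta(n)$, and this corridor closes into a loop with no distinguished endpoints. The crucial feature of the Klein-bottle topology is the vertex transitivity it provides: every vertex of $K_n\setminus\{\bm{0}\}$ plays an interchangeable role under an automorphism of $K_n$ (fixing $\bm{0}$, or up to a controlled perturbation), which is exactly what is needed to obtain a bound that is uniform in $z$.

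Granted such $K_n$, the first step is to show $\mathrm{Es}_{K_n}(\bm{0})\le C\rho^n$ for some $\rho\in(0,1)$. The walk starting at $\bm{0}$ is largely forced into the thin corridor (because $\bm{0}$ is otherwise surrounded by vertices of $K_n$), and in the corridor each step carries a fixed positive probability $1-\rho$ of absorption by the walls. Since the corridor is a closed loop with no designated exit, feasible escape to infinity requires traveling essentially the full length $\Theta(n)$, yielding geometric decay via a standard eigenfunction or generating-function calculation for a quasi-one-dimensional random walk with killing -- directly analogous to the 2D ``tube'' estimate illustrated in Figure~\ref{fig:tube}. The second step is the uniform lower bound $\mathrm{Es}_{K_n\setminus\{z\}}(\bm{0})\ge c\rho^{\alpha n}$ for some universal constant $\alpha<1$: removing any $z$ opens a hole in the corridor wall at $z$'s position, and by the closed-loop structure the corridor distance from $\bm{0}$ to $z$ is at most $\alpha n$ (for a simple closed loop one has $\alpha=1/2$; the Klein-bottle twist changes the constant but not the $\alpha<1$ conclusion). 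The walk then reaches the hole within $\alpha n$ corridor steps and, once outside $K_n$, escapes to infinity with uniformly positive probability. Dividing gives $\mathrm{Es}_{K_n\setminus\{z\}}(\bm{0})/\mathrm{Es}_{K_n}(\bm{0})\ge(c/C)\rho^{-(1-\alpha)n}\ge e^{c_6 n}$ for any fixed $c_6<(1-\alpha)\log(1/\rho)$, which is the claim.

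The main obstacle is the explicit construction of the discrete Klein bottle in $\mathbb{Z}^d$: for $d\ge 4$ the Klein bottle admits a clean embedding, but for $d=3$ it is only immersible (self-intersecting), so one needs a careful discrete thickening or a closely related structure that still enforces both the trapping along a long loop and the loop symmetry while keeping $K_n\in\mathcal{A}(\mathbb{Z}^d)$. Secondary difficulties are: controlling the possible wanderings of the walk outside the corridor for the upper bound (handled by further enclosing the corridor's exterior with walls of $K_n$ so that the only truly infinite escape route is the designed one); verifying the uniform-in-$z$ positive escape probability once the walk leaves through the hole at $z$ (which is exactly the sort of estimate supplied by Lemmas~\ref{lemma_compare_green} and \ref{lemma_escape}); and noting that the capacity monotonicity in Lemma~\ref{lemma_2.5} ensures that when one converts the escape-probability ratio into the harmonic-measure ratio via \eqref{2.14}, the capacity factor $\mathrm{cap}(K_n)/\mathrm{cap}(K_n\setminus\{z\})\ge 1$ does not erode the exponential gain.
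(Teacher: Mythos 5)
Your high-level idea matches the paper's -- a discrete ``Klein-bottle''-shaped set that traps $\bm{0}$ behind a long obligatory escape route, so that removing any single vertex of $K_n$ opens a macroscopic shortcut. But the specific construction you describe does not work, and the symmetry you rely on is not the mechanism the paper uses.

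The fatal issue is the ``closed loop with no distinguished endpoints'' picture together with the claim that this buys vertex transitivity and a uniform bound with $\alpha<1$. If the corridor is a closed loop of length $\Theta(n)$ containing $\bm{0}$ and there is no exit, then $\mathrm{Es}_{K_n}(\bm{0})=0$ and $K_n\notin\mathcal{A}(\mathbb{Z}^d)$. If instead there is a single exit (say antipodal to $\bm{0}$ along the loop, to maximize trapping), then for a vertex $z$ sitting next to that exit, removing $z$ gives essentially no shortcut: the walk still has to travel the full half-loop, so the ratio $\mathrm{Es}_{K_n\setminus\{z\}}(\bm{0})/\mathrm{Es}_{K_n}(\bm{0})$ is $O(1)$, not $e^{c_6n}$. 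Vertex transitivity would rescue this if it held, but it cannot: a finite subset of $\mathbb{Z}^d$ with $\bm{0}$ in its interior and one opening is nowhere near vertex-transitive, and even if you could force some discrete symmetry, it would not make the vertices near the exit equivalent to vertices far from it. The bound ``corridor distance from $\bm{0}$ to any $z$ is at most $\alpha n$ for $\alpha<1$'' is simply not achievable for a one-loop corridor with one exit, which is exactly why a single closed corridor does not suffice.

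The paper's construction is instead a box shell $K_n^1=\partial\Lambda(n)$ (minus corners), with three small holes $x_n^1,x_n^2,x_n^3$ punched out, an \emph{outer} tube $T_n^{\mathrm{o}}$ of length $\Theta(n)$ running from $x_n^1$ to $x_n^2$ outside the box, and an \emph{inner} tube $T_n^{\mathrm{i}}$ of length $\Theta(n)$ running from $x_n^2$ to $x_n^3$ inside the box, with $\bm{0}$ placed at the center of the box. The crucial Observation is that a walk from $\bm{0}$ escaping without touching $K_n$ is forced to traverse both tubes in sequence ($\bm{0}\to x_n^1\to$ outer tube $\to x_n^2\to$ inner tube $\to x_n^3\to\infty$), giving $\mathrm{Es}_{K_n}(\bm{0})\le C(d+\sqrt{d^2-1})^{-\boldsymbol{\mathrm{L}}(\eta_n^{\mathrm{o}})-\boldsymbol{\mathrm{L}}(\eta_n^{\mathrm{i}})}$ via the explicit $\gamma(\eta)$ formula. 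The uniform-in-$z$ lower bound is then a three-case argument, not a symmetry argument: if $z$ is in the shell $K_n^2$, removing it lets the walk go directly from the box interior through the hole at $z$ to the outside, avoiding \emph{both} tubes; if $z\in T_n^{\mathrm{o}}$, the walk traverses only the outer tube up to $z$ and then exits, avoiding the inner tube; if $z\in T_n^{\mathrm{i}}$, the walk pierces the shell near $z$ from the inside, traverses only the remainder of the inner tube, and exits at $x_n^3$, avoiding the outer tube. In every case the escape after removal skips at least one full tube, so the gain is $(d+\sqrt{d^2-1})^{\min\{\boldsymbol{\mathrm{L}}(\eta_n^{\mathrm{o}}),\boldsymbol{\mathrm{L}}(\eta_n^{\mathrm{i}})\}-O(n)}\ge e^{c_6n}$. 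This ``escape requires two stages, removal skips one'' mechanism is what your closed-loop picture is missing. Your worry about $d=3$ vs.\ $d\ge4$ is a red herring: $K_n$ need not be a faithful embedding of a smooth Klein bottle -- the inner tube simply merges with the shell at the hole $x_n^2$ and that is enough -- so the construction works as stated for all $d\ge3$. Your closing remark about passing from escape probabilities to harmonic measure via capacity subadditivity (Lemma~\ref{lemma_2.5}) is correct and matches the paper.
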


		In fact, Theorem \ref{thm1} for $d\ge 3$ follows immediately once Proposition \ref{prop_1} is proved:
		\begin{proof}[Proof of Theorem \ref{thm1} for $d\ge 3$ assuming Proposition \ref{prop_1}]
			Let $\{K_n\}_{n\ge 1}$ be the sequence presented by Proposition \ref{prop_1}. For any large enough $n$ and $z\in K_n\setminus \{\bm{0}\}$, by (\ref{2.14}), (\ref{2.16}) and Proposition \ref{prop_1}, we get 
			\begin{equation*}
				\frac{\mathbb{H}_{K_n\setminus \{z\}}(\bm{0})}{\mathbb{H}_{K_n}(\bm{0})}= 	\frac{\mathrm{Es}_{K_n\setminus \{z\}}(\bm{0})}{\mathrm{Es}_{K_n}(\bm{0})}\cdot \frac{	\mathrm{cap}(K_n)}{\mathrm{cap}(K_n\setminus \{z\})} \ge e^{c_6n}. 
			\end{equation*}
			Thus, it follows from Definition \ref{def1} that 
			\begin{equation}
				\psi(\mathbb{Z}^d)\ge \min_{z\in K_n\setminus \{\bm{0}\}}\frac{\mathbb{H}_{K_n\setminus \{z\}}(\bm{0})}{\mathbb{H}_{K_n}(\bm{0})} \ge e^{c_6n}.  
			\end{equation}
	 Since $n$ can be arbitrarily large, we conclude $\psi(\mathbb{Z}^d)=\infty$ for $d\ge 3$. 
		\end{proof}

		Subsequently, we present the construction of the required sequence of discrete Klein Bottles $\{K_n\}_{n\ge 1}$ (refer to Figure \ref{fig:Klein}), and then proceed to prove that they satisfy (\ref{4.1}). Roughly speaking, each $K_n$ is composed of three parts: the shell of a box, an outer tube and an inner tube. Specifically, $K_n$ can be constructed in the following steps:
		\begin{enumerate}
			\item[Step $1$:] Let $K_n^{1}:= \{x\in \mathbb{Z}^d: \text{exactly one coordinate of}\ x\ \text{equals to}\ n\}$ be the shell of the box $\Lambda(n)$, excluding corner vertices.

			\item[Step $2$:] Let $x_n^1:= (0,n)\times \{0\}^{d-2}$, $x_n^2:=(n)\times \{0\}^{d-1}$ and $x_n^3:=(\tfrac{n}{2},-n)\times \{0\}^{d-2}$. Then we remove these vertices and their neighbors from $K_n^{1}$ to form $$K_n^{2}:=K_n^{1}\setminus \cup_{j=1}^{3} N(x_n^j).$$

			\item[Step $3$:] We take the path $\eta_n^{\mathrm{o}}$ as the broken line joining the vertices $x_n^1$, $(0,2n)\times \{0\}^{d-2}$, $(2n,2n)\times \{0\}^{d-2}$, $(2n)\times \{0\}^{d-1}$ and $x_n^2$ in turn (as shown in Figure \ref{fig:Klein}). Then we define the outer tube 
			$$T_n^{\mathrm{o}}:=\big[\partial^{\mathrm{o}} \boldsymbol{\mathrm{R}}(\eta_n^{\mathrm{o}})\big]\setminus \Lambda(n). $$
			We also take the path $\eta_n^{\mathrm{i}}$ as the broken line joining $x_n^2$, $(\frac{n}{2})\times \{0\}^{d-1}$ and $x_n^3$ in turn (see also Figure \ref{fig:Klein}). We define the inner tube as 
			$$
			T_n^{\mathrm{i}}:=\big[\partial^{\mathrm{o}} \boldsymbol{\mathrm{R}}(\eta_n^{\mathrm{i}})\big]\cap  \Lambda(n-1). 
			$$
			We attach $\bm{0}$ and these two tubes to $K_n^2$, resulting in our target subset $$K_n:=K_n^2\cup T_n^{\mathrm{o}}\cup T_n^{\mathrm{i}}\cup \{\bm{0}\}.$$

		\end{enumerate}

	\begin{remark}\label{remark_observation}

		Here are some crucial observations for $K_n$:  
		\begin{enumerate}
			
			\item As shown by the pink trajectory in Figure \ref{fig:Klein}, if a random walk starts from $\bm{0}$ and escapes to infinity without hitting $K_n$, then it must first get to $x_n^1$, then reach $x_n^2$, then arrive at $x_n^3$, and finally escapes to infinity (during this process, it may move inside $K_n^1$, or go forward and backward inside the tubes, but cannot hit $K_n$).

			\item  For any $z\in K_n^2\cup T_n^{\mathrm{o}}\cup \{x_n^3\}$, a quick observation reveals that within $0.1n$ steps and without hitting $K_n\setminus \{z\}$, the random walk may move from $z$ to a position outside $\Lambda(n)$, whose graph distance to $K_n$ is at least $\frac{n}{100}$. I.e., there exists a path $\hat{\eta}_{z}^{\mathrm{o}}$ such that $\boldsymbol{\mathrm{L}}(\hat{\eta}_{z}^{\mathrm{o}})\le 0.1n$, $\boldsymbol{\mathrm{R}}(\hat{\eta}_{z}^{\mathrm{o}})\cap (K_n\setminus \{z\})=\emptyset$, $\hat{\eta}_{z}^{\mathrm{o}}(0)=z$, $\hat{\eta}_{z}^{\mathrm{o}}(-1)\in [\Lambda(n)]^c$ and $\boldsymbol{\mathrm{D}}(\hat{\eta}_{z}^{\mathrm{o}}(-1),K_n)\ge \frac{n}{100}$.

			Similarly, for any $z\in K_n^2\cup T_n^{\mathrm{i}}\cup \{x_n^1\}$,  there exists a path $\hat{\eta}_{z}^{\mathrm{i}}$ such that $\boldsymbol{\mathrm{L}}(\hat{\eta}_{z}^{\mathrm{i}})\le 0.1n$, $\boldsymbol{\mathrm{R}}(\hat{\eta}_{z}^{\mathrm{i}})\cap (K_n\setminus \{z\})=\emptyset$, $\hat{\eta}_{z}^{\mathrm{i}}(-1)=z$, $\hat{\eta}_{z}^{\mathrm{i}}(0)\in \Lambda(n-1)$ and $\boldsymbol{\mathrm{D}}(\hat{\eta}_{z}^{\mathrm{i}}(0),K_n)\ge \frac{n}{100}$.

			Furthermore, we also denote by $\hat{\eta}_{\bm{0}}$ the path of length $\lfloor 0.1n \rfloor$ such that $\hat{\eta}_{\bm{0}}(i)=(0,i)\times \{0\}^{d-2}$ for all $0\le i \le \boldsymbol{\mathrm{L}}(\hat{\eta}_{\bm{0}})$.

			\item  For any $z\in T_n^{\mathrm{o}}$, the random walk may start from $x_n^1$, then go forward along $\eta_n^{\mathrm{o}}$ to one of the neighbor of $z$, and finally reach $z$ by one step. I.e., there exists a path $\bar{\eta}_z^{\mathrm{o}}$ such that $\bar{\eta}_z^{\mathrm{o}}(0)=x_n^1$, $\bar{\eta}_z^{\mathrm{o}}(-1)=z$, $\boldsymbol{\mathrm{R}}(\bar{\eta}_z^{\mathrm{o}})\cap K_n=\{z\}$ and $\boldsymbol{\mathrm{L}}(\bar{\eta}_z^{\mathrm{o}})\le \boldsymbol{\mathrm{L}}(\eta_n^{\mathrm{o}})+1$.

			Likewise, for any $z\in T_n^{\mathrm{i}}$, there exists a path $\bar{\eta}_z^{\mathrm{i}}$ such that $\bar{\eta}_z^{\mathrm{i}}(0)=z$, $\bar{\eta}_z^{\mathrm{i}}(-1)=x_n^3$, $\boldsymbol{\mathrm{R}}(\bar{\eta}_z^{\mathrm{i}})\cap K_n=\{z\}$ and $\boldsymbol{\mathrm{L}}(\bar{\eta}_z^{\mathrm{i}})\le \boldsymbol{\mathrm{L}}(\eta_n^{\mathrm{i}})+1$.

			\item Arbitrarily removing a vertex $z$ from $K_n$ will create a new (macroscopically shorter) route for the random walk from $\bm{0}$ to escape to infinity without hitting $K_n$, which can be described separately by the position of $z$:

			\begin{enumerate}

				\item  When $z\in K_n^2$, the random walk starting from $\bm{0}$ and escaping to infinite without hitting $K_n$ may first move along $\hat{\eta}_{\bm{0}}$ (``moving along a path $\eta$'' means that the random walk starts from $\eta(0)$ and ends at $\eta(-1)$ and in addition, it can go forward or backward along $\eta$, but cannot intersect $\partial^{\mathrm{o}}\boldsymbol{\mathrm{R}}(\eta)$), then get to $\hat{\eta}_z^{\mathrm{i}}(0)$, next move along the path $\hat{\eta}_z^{\mathrm{i}}\circ \hat{\eta}_z^{\mathrm{o}}$, and finally escape to infinity from $\hat{\eta}_z^{\mathrm{o}}(-1)$.

				\item  When $z\in T_n^{\mathrm{o}}$, the aforementioned random walk may first move along $\hat{\eta}_{\bm{0}}$, then get $\hat{\eta}_{x_n^1}^{\mathrm{i}}(0)$, next move along $\hat{\eta}_{x_n^1}^{\mathrm{i}}\circ \bar{\eta}_z^{\mathrm{o}}\circ \hat{\eta}_{z}^{\mathrm{o}}$, and finally escape to infinity from $\hat{\eta}_z^{\mathrm{o}}(-1)$.

				\item  When $z\in T_n^{\mathrm{i}}$, it may first move along $\hat{\eta}_{\bm{0}}$, then get $\hat{\eta}_{z}^{\mathrm{i}}(0)$, next move along $\hat{\eta}_{z}^{\mathrm{i}}\circ \bar{\eta}_z^{\mathrm{i}}\circ \hat{\eta}_{x_n^3}^{\mathrm{o}}$, and finally escape to infinity from $\hat{\eta}_{x_n^3}^{\mathrm{o}}(-1)$.

			\end{enumerate}

		\end{enumerate}

\end{remark}

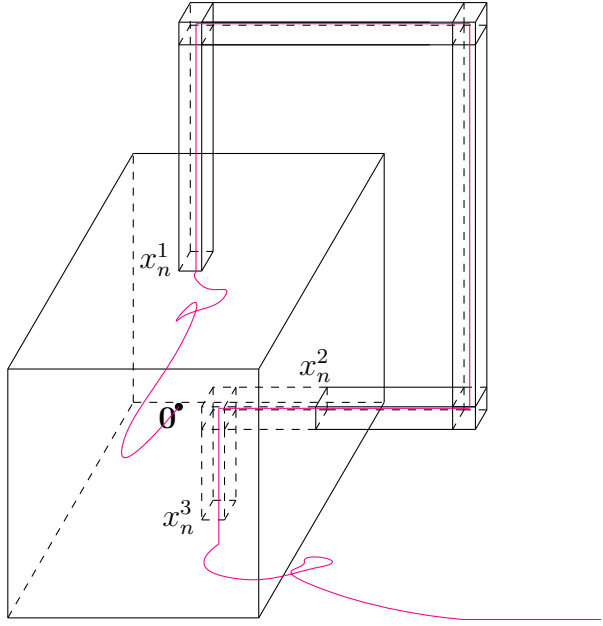
\begin{figure}[h]
	\begin{tikzpicture}[x={(1cm,0cm)}, y={(0cm,1cm)}, z={({cos(60)},{sin(60)})}, scale=0.3]
		
		x={(1cm,0cm)}; y={(0cm,1cm)}; z={({cos(60)},{sin(60)})};
		\def\cubeSize{11}
		\def\s{\cubeSize}
		
		
		\draw (0, 0, 0) -- (\cubeSize,0,0);
		\draw (0, 0, 0) -- (0,\cubeSize,0);
		\draw[dashed] (0, 0, 0) -- (0,0,\cubeSize);
		\draw (0, \cubeSize, 0) -- (0,\cubeSize,\cubeSize);
		\draw (0, \cubeSize, 0) -- (\cubeSize,\cubeSize,0);
		\draw (\cubeSize, \cubeSize, \cubeSize) -- (\cubeSize,\cubeSize,0);
		\draw (\cubeSize, \cubeSize, \cubeSize) -- (0,\cubeSize,\cubeSize);
		\draw (\cubeSize, \cubeSize, \cubeSize) -- (\cubeSize,0,\cubeSize);
		\draw (\cubeSize,0, 0) -- (\cubeSize,\cubeSize,0);
		\draw (\cubeSize,0, 0) -- (\cubeSize,0,\cubeSize);
		\draw[dashed]  (0, 0, \cubeSize) -- (\cubeSize,0,\cubeSize);   
		\draw[dashed]  (0, 0, \cubeSize) -- (0, \cubeSize,\cubeSize);          
		
		\pgfmathsetmacro{\mid}{(\cubeSize-1) / 2};
		\fill (\mid, \mid, \mid) circle (\mid pt);
		\node at (\mid-0.5, \mid-0.5, \mid) []{$\bm{0}$}; 
		\node at (\mid-1, \cubeSize+0.5, \mid) []{$x_n^1$}; 
		\node at (\cubeSize, \mid+1.8, \mid) []{$x_n^2$}; 
		\node at (0.5*\cubeSize-0.5, -0.08*\cubeSize+1, \mid) []{$x_n^3$};

		\draw (\mid, \cubeSize, \mid) -- (\mid+1, \cubeSize, \mid);

		\draw (\mid+1, \cubeSize, \mid) -- (\mid+1, \cubeSize, \mid+1);  
		
		\draw [dashed](\mid, \cubeSize, \mid) -- (\mid, \cubeSize, \mid+1);  
		
		\draw [dashed](\mid, \cubeSize, \mid+1) -- (\mid+1, \cubeSize, \mid+1);

		\draw (\mid, \cubeSize, \mid) -- (\mid, \cubeSize+\s, \mid);        
		\draw (\mid+1, \cubeSize, \mid) -- (\mid+1, \cubeSize+\s, \mid);  
		\draw (\mid+1, \cubeSize, \mid+1) -- (\mid+1, \cubeSize+\s, \mid+1);  
		\draw[dashed] (\mid, \cubeSize, \mid+1) -- (\mid, \cubeSize+\s, \mid+1);  
		
		\draw (\mid+1, \cubeSize+\s, \mid) -- (\mid+1+\s, \cubeSize+\s, \mid);  
		\draw (\mid, \cubeSize+\s, \mid) -- (\mid+\s, \cubeSize+\s, \mid);    
		\draw(\mid, \cubeSize+\s, \mid+1) -- (\mid, \cubeSize+\s, \mid+1);  
		
		\draw(\mid, \cubeSize+\s, \mid+1) -- (\mid+\s+1, \cubeSize+\s, \mid+1);  
		\draw[dashed](\mid, \cubeSize+\s-1, \mid+1) -- (\mid+1+\s, \cubeSize+\s-1, \mid+1);  
		
		
		\draw (\mid+1, \cubeSize+\s-1, \mid) -- (\mid+1+\s, \cubeSize+\s-1, \mid);  
		\draw (\mid, \cubeSize+\s-1, \mid) -- (\mid+\s, \cubeSize+\s-1, \mid);    
		\draw(\mid, \cubeSize+\s-1, \mid+1) -- (\mid, \cubeSize+\s-1, \mid+1);  
		
		\draw[dashed] (\mid+1, \cubeSize+\s-1, \mid) -- (\mid+1, \cubeSize+\s-1, \mid+1);
		\draw[dashed] (\mid, \cubeSize+\s-1, \mid) -- (\mid, \cubeSize+\s-1, \mid+1);
		\draw (\mid+1, \cubeSize+\s, \mid) -- (\mid+1, \cubeSize+\s, \mid+1);
		\draw (\mid, \cubeSize+\s, \mid) -- (\mid, \cubeSize+\s, \mid+1);

		\draw[dashed](\mid+1+\s, \cubeSize+\s-1, \mid+1) -- (\mid+1+\s, \cubeSize+\s-1, \mid+1-1);
		\draw(\mid+1+\s, \cubeSize+\s-1, \mid) -- (\mid+2+\s, \cubeSize+\s-1, \mid);
		\draw(\mid+1+\s, \cubeSize+\s-1, \mid) -- (\mid+1+\s, \cubeSize+\s, \mid);
		
		\draw(\mid+2+\s, \cubeSize+\s-1, \mid)-- (\mid+2+\s, \cubeSize+\s-1, \mid+1);
		\draw(\mid+2+\s, \cubeSize+\s-1, \mid+1)-- (\mid+2+\s, \cubeSize+\s, \mid+1);
		\draw[dashed](\mid+2+\s, \cubeSize+\s-1, \mid+1)-- (\mid+1+\s, \cubeSize+\s-1, \mid+1);
		\draw[dashed](\mid+1+\s, \cubeSize+\s-1, \mid+1)-- (\mid+1+\s, \cubeSize+\s, \mid+1);
		\draw(\mid+1+\s, \cubeSize+\s, \mid)-- (\mid+1+\s, \cubeSize+\s, \mid+1);
		\draw(\mid+1+\s, \cubeSize+\s, \mid+1)-- (\mid+2+\s, \cubeSize+\s, \mid+1);
		\draw(\mid+1+\s, \cubeSize+\s, \mid)-- (\mid+2+\s, \cubeSize+\s, \mid);
		\draw(\mid+2+\s, \cubeSize+\s, \mid)-- (\mid+2+\s, \cubeSize+\s-1, \mid);
		\draw(\mid+2+\s, \cubeSize+\s, \mid)-- (\mid+2+\s, \cubeSize+\s, \mid+1);

		\draw(\mid+1+\s, \cubeSize+\s-1, \mid)--(\mid+1+\s, \mid+2-2, \mid);
		\draw(\mid+2+\s, \cubeSize+\s-1, \mid)--(\mid+2+\s, \mid+2-2, \mid);
		\draw[dashed](\mid+1+\s, \cubeSize+\s-1, \mid+1)--(\mid+1+\s, \mid+2-2, \mid+1);
		\draw(\mid+2+\s, \cubeSize+\s-1, \mid+1)--(\mid+2+\s, \mid+2-2, \mid+1);

		\draw(\mid+1+\s, \mid+2-2, \mid)--(\mid+2+\s, \mid+2-2, \mid);
		\draw[dashed](\mid+1+\s, \mid+2-2, \mid)--(\mid+1+\s, \mid+2-2, \mid+1);
		\draw[dashed](\mid+1+\s, \mid+2-2, \mid+1)--(\mid+2+\s, \mid+2-2, \mid+1);
		\draw(\mid+2+\s, \mid+2-2, \mid+1)--(\mid+2+\s, \mid+2-2, \mid);
		\draw(\mid+2+\s, \mid+2-2, \mid)--(\mid+2+\s, \mid+2-3, \mid);
		\draw(\mid+1+\s, \mid+2-2, \mid)--(\mid+1+\s, \mid+2-3, \mid);
		\draw[dashed](\mid+1+\s, \mid+2-2, \mid+1)--(\mid+1+\s, \mid+2-3, \mid+1);
		\draw(\mid+2+\s, \mid+2-2, \mid+1)--(\mid+2+\s, \mid+2-3, \mid+1);
		\draw[dashed](\mid+2+\s, \mid+2-3, \mid+1)--(\mid+1+\s, \mid+2-3, \mid+1);
		\draw(\mid+2+\s, \mid+2-3, \mid+1)--(\mid+2+\s, \mid+2-3, \mid);
		\draw[dashed](\mid+1+\s, \mid+2-3, \mid+1)--(\mid+1+\s, \mid+2-3, \mid);
		
		\draw(\mid+2+\s, \mid+2-3, \mid)--(\s, \mid+2-3, \mid);
		
		\draw(\mid+2+\s, \mid+2-2, \mid)--(\s, \mid+2-2, \mid);
		\draw(\mid+2+\s-1, \mid+2-2, \mid+1)--(\s, \mid+2-2, \mid+1);
		\draw[dashed](\mid+1+\s, \mid+2-3, \mid+1)--(\s, \mid+2-3, \mid+1);
		\draw[dashed](\s, \mid+2-3, \mid)--(\s, \mid+2-3, \mid+1);
		\draw(\s, \mid+2-3, \mid)--(\s, \mid+2-2, \mid);
		\draw[dashed](\s, \mid+2-2, \mid+1)--(\s, \mid+2-3, \mid+1);
		\draw(\s, \mid+2-2, \mid+1)--(\s, \mid+2-2, \mid);
		
		\draw[dashed] (\s, \mid+2-3, \mid)--(\mid+1, \mid+2-3, \mid);
		\draw[dashed] (\s, \mid+2-3, \mid+1)--(\mid+1, \mid+2-3, \mid+1);
		\draw[dashed] (\s, \mid+2-2, \mid)--(\mid+1, \mid+2-2, \mid);
		\draw[dashed] (\s, \mid+2-2, \mid+1)--(\mid+1, \mid+2-2, \mid+1);

		\draw[dashed] (\mid+1, \mid+2-3, \mid)--(\mid+1, \mid+2-3, \mid+1);
		\draw[dashed] (\mid+1, \mid+2-3, \mid)--(\mid+1, \mid+2-2, \mid);
		\draw[dashed] (\mid+1, \mid+2-2, \mid+1)--(\mid+1, \mid+2-3, \mid+1);
		\draw[dashed] (\mid+1, \mid+2-2, \mid+1)--(\mid+1, \mid+2-2, \mid);
		
		\draw[dashed] (\mid+2, \mid+2-3, \mid)--(\mid+2, \mid+2-3, \mid+1);
		\draw[dashed] (\mid+2, \mid+2-3, \mid)--(\mid+2, \mid+2-2, \mid);
		\draw[dashed] (\mid+2, \mid+2-2, \mid+1)--(\mid+2, \mid+2-3, \mid+1);
		\draw[dashed] (\mid+2, \mid+2-2, \mid+1)--(\mid+2, \mid+2-2, \mid);
		
		\draw[dashed] (\mid+1, \mid+2-3, \mid)--(\mid+1, 0, \mid);
		\draw[dashed] (\mid+2, \mid+2-3, \mid)--(\mid+2, 0, \mid);
		\draw[dashed] (\mid+1, \mid+2-3, \mid+1)--(\mid+1, 0, \mid+1);
		\draw[dashed] (\mid+2, \mid+2-3, \mid+1)--(\mid+2, 0, \mid+1);
		
		\draw[dashed] (\mid+1, 0, \mid)--(\mid+2, 0, \mid);
		\draw[dashed] (\mid+1, 0, \mid)--(\mid+1, 0, \mid+1);
		\draw[dashed] (\mid+2, 0, \mid)--(\mid+2, 0, \mid+1);
		\draw[dashed] (\mid+1, 0, \mid+1)--(\mid+2, 0, \mid+1);
		
		\draw[magenta] (\mid+1.5, \mid-0.5, \mid+0.5)--(\mid+1.5,-1.5, \mid+0.5);
		\draw [magenta] plot [smooth, tension=1.5] coordinates { (\mid+1.5,-1.5, \mid+0.5)(\mid+2,-2.5, \mid) (\mid+6,-2.5, \mid+0.5) (\mid+5,-2.7, \mid+1) (\mid+6,-4, \mid+1)(\mid+11,-7, \mid+3)};
		
		\draw [magenta] plot [smooth, tension=1.5] coordinates {(\mid+11,-7, \mid+3)(\s+11,-7, \mid+3) };
		
		\draw[magenta] (\mid+1.5, \mid-0.5, \mid+0.5)--(\s+\mid+1.5,\mid-0.5, \mid+0.5);
		\draw[magenta] (\s+\mid+1.5,\mid-0.5, \mid+0.5)--(\s+\mid+1.5,2*\mid+\s+0.5, \mid+0.5);
		\draw[magenta] (\s+\mid+1.5,2*\mid+\s+0.5, \mid+0.5)--(\mid+0.5,2*\mid+\s+0.5, \mid+0.5);
		\draw[magenta] (\mid+0.5,2*\mid+\s+0.5, \mid+0.5)--(\mid+0.5,\s-0.5, \mid+0.5);
		\draw [magenta] plot [smooth, tension=1.5] coordinates { (\mid+0.5,\s-0.5, \mid+0.5) (\mid+0.5,\s-1.5, \mid+1)(\mid+1.5,\s-2, \mid+1)(\mid-0.5,\s-3, \mid+1)(\mid-1,\s-5, \mid+3)(\mid-2,\s-7, \mid-1)(\mid,\mid,\mid)};

	\end{tikzpicture}
	\caption{The discrete Klein bottle for $\psi(\mathbb{Z}^d)=\infty$ \label{fig:Klein}}
\end{figure}


Before proving Proposition \ref{prop_1}, we need the following formula for the probability that a random walk moves along a fixed path without hitting its boundary.

\begin{lemma}\label{lemma_move_along}
	For any self-avoiding path $\eta$ on $\mathbb{Z}^d$, let $\gamma(\eta):=\mathbb{P}_{\eta(0)} \left[  \tau_{\eta(-1)}< \tau_{\partial^{\mathrm{o}}\boldsymbol{\mathrm{R}}(\eta)}  \right]$. Then we have 
	\begin{equation}\label{new_4.3}
		\gamma(\eta):= \frac{2\sqrt{d^2-1}}{(d+\sqrt{d^2-1} )^{\boldsymbol{\mathrm{L}}(\eta)+1} -(d-\sqrt{d^2-1} )^{\boldsymbol{\mathrm{L}}(\eta)+1}  }.
	\end{equation}
\end{lemma}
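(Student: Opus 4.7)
The plan is to reduce the computation of $\gamma(\eta)$ to a one-dimensional gambler's-ruin problem along the path. Setting $L := \boldsymbol{\mathrm{L}}(\eta)$, I would introduce $p_i := \mathbb{P}_{\eta(i)}\bigl[\tau_{\eta(L)} < \tau_{\partial^{\mathrm{o}} \boldsymbol{\mathrm{R}}(\eta)}\bigr]$ for $0 \le i \le L$, so that $\gamma(\eta) = p_0$ and $p_L = 1$. The key observation is that at an interior vertex $\eta(i)$ with $0 < i < L$, two of its $2d$ lattice-neighbors are $\eta(i-1)$ and $\eta(i+1)$, and the remaining $2d-2$ neighbors are either in $\partial^{\mathrm{o}} \boldsymbol{\mathrm{R}}(\eta)$ or are further vertices of $\eta$ (the latter being a possible ``shortcut''); an analogous count applies at the two endpoints. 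In the no-shortcut case, the walk restricted to $\boldsymbol{\mathrm{R}}(\eta)$ is a genuine one-dimensional random walk that jumps to either path-neighbor with probability $1/(2d)$ and is otherwise absorbed at the exterior boundary.

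Once the reduction is in place, $p_i$ satisfies the harmonic recursion $2d\, p_i = p_{i-1} + p_{i+1}$ for $0 \le i \le L-1$, subject to $p_L = 1$ and the ghost-vertex condition $p_{-1} = 0$ (equivalent to the direct endpoint relation $p_0 = p_1/(2d)$ obtained by noting that $\eta(1)$ is the unique path-neighbor of $\eta(0)$). The characteristic polynomial $x^2 - 2dx + 1 = 0$ has roots $\alpha := d + \sqrt{d^2-1}$ and $\beta := d - \sqrt{d^2-1}$ satisfying $\alpha\beta = 1$ and $\alpha - \beta = 2\sqrt{d^2-1}$, so the standard two-parameter ansatz yields
\begin{equation*}
p_i = \frac{\alpha^{i+1} - \beta^{i+1}}{\alpha^{L+1} - \beta^{L+1}},
\end{equation*}
and specializing at $i = 0$ gives exactly the expression in \eqref{new_4.3}.

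The subtle point I expect to be the main obstacle is that a general self-avoiding path in $\mathbb{Z}^d$ can have non-consecutive vertices $\eta(i), \eta(j)$ (with $|i-j| \ge 2$) that are lattice-adjacent, which introduces shortcuts that destroy the tridiagonal structure and make the displayed formula only a lower bound for the originally defined $\gamma(\eta)$. To handle this I would take one of two routes: either (i) reinterpret the identity as a lower bound, obtained by restricting to the event that every step of the walk is of the form $\eta(i) \to \eta(i \pm 1)$, which is what is actually needed in the applications of this lemma in Proposition \ref{prop_1}; or (ii) verify that the specific paths to which the lemma is applied --- the straight segments constituting the tubes together with the axis path $\hat{\eta}_{\bm{0}}$ of Figure \ref{fig:Klein} --- are genuine graph geodesics free of such self-adjacencies, in which case the one-dimensional reduction is exact and the identity holds as stated.
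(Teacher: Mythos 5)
Your approach is the same as the paper's: reduce to a one-dimensional gambler's-ruin recursion $2d\,q_i = q_{i-1}+q_{i+1}$ with boundary conditions $q_0 = q_1/(2d)$ and $q_L=1$, solve via the characteristic roots $d\pm\sqrt{d^2-1}$, and read off $\gamma(\eta)=q_0$. The derivation of the closed form is correct.

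The worry you raise about ``shortcuts'' is a genuine one, and in fact the lemma as literally stated is false for arbitrary self-avoiding paths: the recursion requires that the only lattice-neighbors of $\eta(i)$ inside $\boldsymbol{\mathrm{R}}(\eta)$ are $\eta(i\pm 1)$, which the paper's proof asserts without justification. For the $d=2$ path $\eta=\bigl((0,0),(1,0),(1,1),(0,1)\bigr)$ one has $\eta(0)\sim\eta(3)$, so $\gamma(\eta)\ge 1/4$, whereas the displayed formula gives $1/56$. What the gambler's-ruin computation actually produces, in full generality, is the probability that the walk reaches $\eta(-1)$ using only steps of the form $\eta(i)\to\eta(i\pm 1)$; this quantity is a lower bound for $\gamma(\eta)$ and equals it precisely when no two non-consecutive vertices of $\eta$ are lattice-adjacent. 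Both of your proposed repairs are sound, and your remark that only this weaker statement is needed in Proposition~\ref{prop_1} is accurate: the lower bounds \eqref{new_4.5}--\eqref{new_4.8} only use $\gamma(\eta)\ge$ formula, and the upper bound in \eqref{new_4.4} only uses the probability of ``moving along'' $\eta_n^{\mathrm{o}}\circ\eta_n^{\mathrm{i}}$ in the forward/backward sense, which is exactly what the formula computes (alternatively, this particular concatenation of macroscopic broken-line segments is easily seen to be shortcut-free). So your proof is correct and, by flagging the hidden hypothesis, actually a bit more careful than the paper's.
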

\begin{proof}
	For any $0\le i\le \boldsymbol{\mathrm{L}}(\eta)$, let $q_i:=\mathbb{P}_{\eta(i)} \left[  \tau_{\eta(-1)}< \tau_{\partial^{\mathrm{o}}\boldsymbol{\mathrm{R}}(\eta)}  \right]$. Then the number sequence $\{q_i\}_{i=0}^{\boldsymbol{\mathrm{L}}(\eta)}$ satisfies the following:
	\begin{itemize}
		\item  $q_0=\frac{1}{2d}q_1$ (for the random walk from $\eta(0)$ not hitting $\partial^{\mathrm{o}}\boldsymbol{\mathrm{R}}(\eta)$, the only possible move for the first step is to $\eta(1)$); 
		
		\item $q_i=\frac{1}{2d}(q_{i-1}+q_{i+1})$ for all $1\le i\le \boldsymbol{\mathrm{L}}(\eta)-1$ (when the aforementioned random walk is at $\eta(i)$, it can only go forward to $\eta(i+1)$ or go backward to $\eta(i-1)$); 
 		
		\item $q_{\boldsymbol{\mathrm{L}}(\eta)}=1$ ($\tau_{\eta(-1)}=0$ a.s. for the random walk from $\eta(-1)$). 
		
	\end{itemize}
By the basic theory for recursive sequences, the only solution is given by 
\begin{equation*}
	q_i= \frac{(d+\sqrt{d^2-1} )^{i+1} -(d-\sqrt{d^2-1} )^{i+1} }{(d+\sqrt{d^2-1} )^{\boldsymbol{\mathrm{L}}(\eta)+1} -(d-\sqrt{d^2-1} )^{\boldsymbol{\mathrm{L}}(\eta)+1} },\ \forall 0\le i\le \boldsymbol{\mathrm{L}}(\eta). 
\end{equation*}
By taking $i=0$, we obtain the formula (\ref{new_4.3}). 
\end{proof}

Now we are ready to establish Proposition \ref{prop_1}. 
\begin{proof}[Proof of Proposition \ref{prop_1}]
	
	We first prove an upper bound for $\mathrm{Es}_{K_n}(\bm{0})$. Arbitrarily take a large enough $M\gg n$. For any $y\in \partial^{\mathrm{o}}B(M)$, by Markov property and Lemma \ref{lemma2.3}, the probability $\mathbb{P}_{\bm{0}}\big(\tau_{K_n\cup \partial^{\mathrm{o}} B(M)}^+=\tau_{y}\big)$ can be written as
	\begin{equation*}
		\begin{split}
			&\frac{1}{2d} \sum_{v_1\in N(\bm{0})}\mathbb{P}_{v_1}\left(\tau_{K_n\cup \partial^{\mathrm{o}} B(M)}=\tau_{y}\right) \\
			 &=\frac{1}{2d} \sum_{v_1\in N(\bm{0})} \sum_{ v_2\in N(x_n^1)\cap \Lambda(n)} G_{K_n} (v_1,v_2) \mathbb{P}_{v_2}\left(\tau^+_{K_n\cup  [\partial^{\mathrm{o}} B(M)]\cup [N(x_n^1)\cap \Lambda(n)]} = \tau_{y} \right).  
		\end{split}		
	\end{equation*}
	Note that $G_{K_n} (v_1,v_2)\le G(\bm{0},\bm{0})$. Moreover, by Obsevation (1) in Remark \ref{remark_observation}, the event $\{\tau^+_{K_n\cup  [\partial^{\mathrm{o}} B(M)]\cup [N(x_n^1)\cap \Lambda(n)]} = \tau_{y}\}$ implies that the random walk must go to $x_n^1$ in the first step, and then move along $\eta_n^{\mathrm{o}}\circ \eta_n^{\mathrm{i}}$. Thus, by Lemma \ref{lemma_move_along}, we have  
 	\begin{equation*}
 		\begin{split}
 				\mathbb{P}_{\bm{0}}\left(\tau^+_{K_n\cup \partial^{\mathrm{o}} B(M)}=\tau_{y}\right) \le 2d\cdot G(\bm{0},\bm{0})\cdot \gamma(\eta_n^{\mathrm{o}}\circ \eta_n^{\mathrm{i}}) 
 				\le C \left(  d+\sqrt{d^2-1}\right)  ^{-\boldsymbol{\mathrm{L}}(\eta_n^{\mathrm{o}})-\boldsymbol{\mathrm{L}}(\eta_n^{\mathrm{i}}) }.
 		\end{split}
 	\end{equation*}
By summing over $y\in \partial^{\mathrm{o}}B(M)$ and letting $M\to \infty$, we obtain
\begin{equation}\label{new_4.4}
	\mathrm{Es}_{K_n}(\bm{0}) \le C\left(  d+\sqrt{d^2-1}\right)  ^{-\boldsymbol{\mathrm{L}}(\eta_n^{\mathrm{o}})-\boldsymbol{\mathrm{L}}(\eta_n^{\mathrm{i}}) }. 
\end{equation}

Next, we prove a lower bound of $\mathrm{Es}_{K_n\setminus \{z\}}(\bm{0})$ for all $z\in K_n\setminus \{\bm{0}\}$. When $z\in K_n^2$, by Observation (4a) in Remark \ref{remark_observation}, $\mathrm{Es}_{K_n\setminus \{z\}}(\bm{0})$ is bounded from below by 
\begin{equation}\label{new_4.5}
	\begin{split}
		& \gamma(\hat{\eta}_{\bm{0}}) \mathbb{P}_{\hat{\eta}_{\bm{0}}(-1)} \left(\tau_{\hat{\eta}^{\mathrm{i}}_{z}(0)}< \tau_{K_n} \right)  \gamma(\hat{\eta}_z^{\mathrm{i}}\circ \hat{\eta}_z^{\mathrm{o}})  \mathbb{P}_{\hat{\eta}_z^{\mathrm{o}}(-1)} \left(\tau_{K_n}=\infty \right)\\
		   & \ge c \left(   d+\sqrt{d^2-1}\right) ^{-0.3n} \mathbb{P}_{\hat{\eta}_{\bm{0}}(-1)} \left(\tau_{\hat{\eta}^{\mathrm{i}}_{z}(0)}< \tau_{K_n} \right)  \mathbb{P}_{\hat{\eta}_z^{\mathrm{o}}(-1)} \left(\tau_{K_n}=\infty \right),
	\end{split}
\end{equation}
where in the last inequality we used Lemma \ref{lemma_move_along} and the fact that (recalling Observation (1) in Remark \ref{remark_observation})
$$
\max\left\lbrace \boldsymbol{\mathrm{L}}(\hat{\eta}_{\bm{0}}),\boldsymbol{\mathrm{L}}(\hat{\eta}_z^{\mathrm{i}}), \boldsymbol{\mathrm{L}}(\hat{\eta}_z^{\mathrm{o}})\right\rbrace \le 0.1n.
$$
Recall that $\min\{\boldsymbol{\mathrm{D}}(\hat{\eta}_{\bm{0}}(-1),K_n),\boldsymbol{\mathrm{D}}(\hat{\eta}^{\mathrm{i}}_{z}(0),K_n) \}\ge \frac{n}{100}$. Moreover, \cite[Proposition 1.5.9]{lawler2013intersections} shows that for a random walk starting from $B_y(\frac{n}{200\sqrt{d}})$, the probability that hits $y$ before $\partial^{\mathrm{i}}B_y(\frac{n}{100\sqrt{d}})$ is of the same order as $n^{2-d}$. Therefore, by strong Markov property and invariance principle, we know that $\mathbb{P}_{\hat{\eta}_{\bm{0}}(-1)} \left(\tau_{\hat{\eta}^{\mathrm{i}}_{z}(0)}< \tau_{K_n} \right)$ is bounded from below by 
\begin{equation*}
	\begin{split}
			 \mathbb{P}_{\hat{\eta}_{\bm{0}}(-1)}\left(\tau_{B_{\hat{\eta}^{\mathrm{i}}_{z}(0)}(\frac{n}{200\sqrt{d}})}<\tau_{K_n} \right) \cdot \min_{w\in \partial^{\mathrm{i}}B_{\hat{\eta}^{\mathrm{i}}_{z}(0)}(\frac{n}{200\sqrt{d}}) }\mathbb{P}_w\left( \tau_{\hat{\eta}^{\mathrm{i}}_{z}(0)}<\tau_{\partial^{\mathrm{i}}B_{\hat{\eta}^{\mathrm{i}}_{z}(0)}(\frac{n}{100\sqrt{d}})} \right) 
			 \ge cn^{2-d}. 
	\end{split}
\end{equation*}
Furthermore, by strong Markov property, invariance principle and Lemma \ref{lemma_escape}, we also know that $\mathbb{P}_{\hat{\eta}_z^{\mathrm{o}}(-1)} \left(\tau_{K_n}=\infty \right)$ is bounded from below by 
\begin{equation*}
	\begin{split}
	\mathbb{P}_{\hat{\eta}_z^{\mathrm{o}}(-1)} \left(\tau_{\partial^{\mathrm{i}}B(4n)}<\tau_{K_n}\right) \cdot \min_{w\in \partial^{\mathrm{i}}B(4n)}\mathbb{P}_{w}\left(\tau_{K_n}=\emptyset \right)\ge c. 
	\end{split}
\end{equation*}
Combining these two estimates and (\ref{new_4.5}), we obtain 
\begin{equation}\label{new_4.6}
	\mathrm{Es}_{K_n\setminus \{z\}}(\bm{0}) \ge cn^{2-d} \left(   d+\sqrt{d^2-1}\right) ^{-0.3n}. 
\end{equation}

When $z\in T_n^{\mathrm{o}}$, by Observation (4b) in Remark \ref{remark_observation}, we have 
\begin{equation*}
	\begin{split}
		\mathrm{Es}_{K_n\setminus \{z\}}(\bm{0}) \ge \gamma(\hat{\eta}_{\bm{0}}) \mathbb{P}_{\hat{\eta}_{\bm{0}}(-1)}  \big(\tau_{\hat{\eta}^{\mathrm{i}}_{x_n^1}(0)}< \tau_{K_n} \big)  \gamma(\hat{\eta}_{x_n^1}^{\mathrm{i}}\circ \bar{\eta}_z^{\mathrm{o}}\circ \hat{\eta}_{z}^{\mathrm{o}})  \mathbb{P}_{\hat{\eta}_z^{\mathrm{o}}(-1)} \left(\tau_{K_n}=\infty \right).  
	\end{split}
\end{equation*}
Thus, with the similar arguments as proving (\ref{new_4.6}), we can get 
\begin{equation}\label{new_4.7}
	\mathrm{Es}_{K_n\setminus \{z\}}(\bm{0}) \ge  cn^{2-d} \left(   d+\sqrt{d^2-1}\right) ^{-0.3n-\boldsymbol{\mathrm{L}}(\eta_n^{\mathrm{o}})}. 
\end{equation}
In the same way, when $z\in T_n^{\mathrm{i}}$, we also have 
\begin{equation}\label{new_4.8}
	\begin{split}
		\mathrm{Es}_{K_n\setminus \{z\}}(\bm{0}) \ge& \gamma(\hat{\eta}_{\bm{0}}) \mathbb{P}_{\hat{\eta}_{\bm{0}}(-1)}  \big(\tau_{\hat{\eta}^{\mathrm{i}}_{z}(0)}< \tau_{K_n} \big)  \gamma(\hat{\eta}_{z}^{\mathrm{i}}\circ \bar{\eta}_z^{\mathrm{i}}\circ \hat{\eta}_{x_n^3}^{\mathrm{o}})  \mathbb{P}_{\hat{\eta}_{x_n^3}^{\mathrm{o}}(-1)} \left(\tau_{K_n}=\infty \right)\\
		\ge &  cn^{2-d} \left(   d+\sqrt{d^2-1}\right) ^{-0.3n-\boldsymbol{\mathrm{L}}(\eta_n^{\mathrm{i}})}. 
	\end{split}
\end{equation}

By (\ref{new_4.6}), (\ref{new_4.7}) and (\ref{new_4.8}), we have: for any $z\in K_n\setminus \{\bm{0}\}$, 
\begin{equation*}
	\mathrm{Es}_{K_n\setminus \{z\}}(\bm{0}) \ge  cn^{2-d}\left(   d+\sqrt{d^2-1}\right) ^{-0.3n-\max\{\boldsymbol{\mathrm{L}}(\eta_n^{\mathrm{o}}),\boldsymbol{\mathrm{L}}(\eta_n^{\mathrm{i}})\}}.
\end{equation*}
Combined with (\ref{new_4.4}), it implies that 
\begin{equation}\label{new_4.9}
	\frac{\mathrm{Es}_{K_n\setminus \{z\}}(\bm{0})}{\mathrm{Es}_{K_n}(\bm{0})} \ge cn^{2-d} \left(   d+\sqrt{d^2-1}\right)^{\min\{\boldsymbol{\mathrm{L}}(\eta_n^{\mathrm{o}}),\boldsymbol{\mathrm{L}}(\eta_n^{\mathrm{i}})\}-0.3n}.
\end{equation}
By (\ref{new_4.9}) and the fact that $\min\{\boldsymbol{\mathrm{L}}(\eta_n^{\mathrm{o}}),\boldsymbol{\mathrm{L}}(\eta_n^{\mathrm{i}})\}\ge n$ (recalling the construction of $K_n$), we conclude the desired bound (\ref{4.1}). 
\end{proof}


\begin{remark}[Generalization of Theorem \ref{thm1} for $\mathbb{Z}^d$ with $d\ge 3$] \label{generalize_3d}
	
	Although our proof of $\psi(\mathbb{Z}^d)=\infty$ for $d\ge 3$ presented in this section relies on the $\mathbb{Z}^d$ structure, its essential component is realizing the geometry of the Klein bottle within the graph. Thus, we expect that this approach can be generalized to a broader family of graphs in high dimensions. 
	
\end{remark}

	\section{Exponential decay of harmonic measure extremum for $\mathbb{Z}^d$ ($d\ge 3$)}\label{section_3d_decay}
	
	In this section, we will present the proof of Theorem \ref{thm2}. However, in the case of $\mathbb{Z}^d$ for $d\ge 3$, it is no longer possible to employ the argument in proving Theorem \ref{coro1} (i.e. remove only one vertex at a time), as the price of removing a single vertex can become arbitrarily large (see Section \ref{section_psi_infinite}). To overcome this obstacle, our strategy is to estimate the price of removing an entire $*$-cluster with respect to its cardinality (see Lemma \ref{lemma_exponential_price}), and then conclude the exponential decay by proving the lower bound for the harmonic measure of a $*$-connected set (see Lemma \ref{lemma_3d_connect_harmonic}).  To be precise, we need the following lemmas. 
	
	\begin{lemma}\label{lemma_exponential_price}
		For any $d\ge 3$, there exists a constant $C_7(d)>0$ such that for any $A\in \mathcal{A}(\mathbb{Z}^d)$ and any $*$-cluster $D\subset A$ with $\bm{0}\notin D$, we have 
		\begin{equation}\label{5.1}
			\frac{\mathbb{H}_{A\setminus D}(\bm{0})}{\mathbb{H}_A(\bm{0})} \le e^{C_7|D|}. 
		\end{equation}
	\end{lemma}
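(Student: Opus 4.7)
The plan is to use (\ref{2.14}) to factor
\begin{equation*}
\frac{\mathbb{H}_{A\setminus D}(\bm{0})}{\mathbb{H}_A(\bm{0})} = \frac{\mathrm{Es}_{A\setminus D}(\bm{0})}{\mathrm{Es}_A(\bm{0})} \cdot \frac{\mathrm{cap}(A)}{\mathrm{cap}(A\setminus D)}
\end{equation*}
and control each factor separately. For the capacity factor, Lemma~\ref{lemma_2.5} gives $\mathrm{cap}(A)\leq\mathrm{cap}(A\setminus D)+\mathrm{cap}(D)$, so combining $\mathrm{cap}(D)\leq|D|$ with $\mathrm{cap}(A\setminus D)\geq\mathrm{cap}(\{\bm{0}\})=1/G(\bm{0},\bm{0})$ yields the polynomial bound $\mathrm{cap}(A)/\mathrm{cap}(A\setminus D)\leq 1+C|D|$, negligible compared with the target exponential.

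For the escape-probability factor I would revisit the derivation of Lemma~\ref{lemma_upper_bound}. Although the stated conclusion is restricted to $d=2$, the computation from (\ref{3.1}), (\ref{3.2}), (\ref{3.4}) and (\ref{2.5}) is entirely dimension-free; what changes in $d\geq 3$ is only the $|x|\to\infty$ limit, which now returns $\mathrm{Es}_{A\setminus D}(\bm{0})/\mathrm{Es}_A(\bm{0})$ (via the asymptotic $\mathbb{P}_x(S_{\tau_B}=\bm{0},\tau_B<\infty)\sim G(x,\bm{0})\mathrm{Es}_B(\bm{0})$) instead of a harmonic-measure ratio. Taking $F_1=F_2=D\cup\partial^{\mathrm{o}}_* D$, the requirement $F_j\cap A=D$ is automatic because $D$ is a $*$-cluster of $A$: no vertex of $A\setminus D$ is $*$-adjacent to $D$, so $\partial^{\mathrm{o}}_* D\cap A=\emptyset$. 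Since vertices of $A$ contribute zero Green's function to the decomposition, the effective maximum runs only over $v_1,v_2\in\partial^{\mathrm{o}}_* D\subset A^c$, giving
\begin{equation*}
\frac{\mathrm{Es}_{A\setminus D}(\bm{0})}{\mathrm{Es}_A(\bm{0})} \leq \max_{v_1,v_2\in\partial^{\mathrm{o}}_* D} \frac{G_{A\setminus D}(v_1,v_2)}{G_A(v_1,v_2)}.
\end{equation*}

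The core of the argument is bounding this Green's-function ratio by $e^{C|D|}$. The numerator obeys the trivial estimate $G_{A\setminus D}(v_1,v_2)\leq G(\bm{0},\bm{0})$. For the denominator I would first observe that $A\in\mathcal{A}(\mathbb{Z}^d)$ promotes $v_1,v_2$ from $\partial^{\mathrm{o}}_* D$ into $\partial^{\mathrm{o}}_{\infty,*} D$: since $\bm{0}$ has a neighbor in $A^c_\infty$, the $\bm{0}$-component used to define $\widecheck{F}_2$ must itself sit inside $A^c_\infty$ (otherwise $\bm{0}$ would fail to have a neighbor in $A^c_\infty$), so a neighbor of $v_2$ in that component forces $v_2\in A^c_\infty$, and the analogous argument handles $v_1$. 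By Lemma~\ref{lemma_connectivity}(2) the set $\partial^{\mathrm{o}}_{\infty,*} D$ is connected, and elementary counting gives $|\partial^{\mathrm{o}}_{\infty,*} D|\leq(3^d-1)|D|$; hence there is a self-avoiding path $\eta$ in $\partial^{\mathrm{o}}_{\infty,*} D\subset A^c$ joining $v_1$ and $v_2$ with $\boldsymbol{\mathrm{L}}(\eta)\leq C|D|$. Applying Lemma~\ref{lemma_move_along} then yields $\mathbb{P}_{v_1}(\tau_{v_2}<\tau_A)\geq\gamma(\eta)\geq e^{-C'|D|}$, hence $G_A(v_1,v_2)\geq e^{-C'|D|}$.

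The main obstacle I anticipate is exactly this geometric step: justifying that the exposed boundary vertices $v_1,v_2$ do belong to $\partial^{\mathrm{o}}_{\infty,*} D$ and extracting the $O(|D|)$ path-length bound from Lemma~\ref{lemma_connectivity}(2). Once this is in place, combining the resulting $e^{C''|D|}$ bound on the escape-probability ratio with the polynomial capacity factor produces the desired estimate $\mathbb{H}_{A\setminus D}(\bm{0})/\mathbb{H}_A(\bm{0})\leq e^{C_7|D|}$.
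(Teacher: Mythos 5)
Your proposal is essentially correct, but it takes a genuinely different (and, as it happens, simpler) route than the paper for the key Green's-function bound, so let me compare.

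Both proofs begin identically: factor $\mathbb{H}_{A\setminus D}(\bm{0})/\mathbb{H}_A(\bm{0})$ into an escape ratio and a capacity ratio via (\ref{2.14}), bound the capacity ratio by $1+|D|$ via Lemma~\ref{lemma_2.5}, and reduce the escape ratio to a maximum over $v_1, v_2$ of $G_{\widetilde A}(v_1,v_2)/G_A(v_1,v_2)$. (The paper reaches (\ref{5.6}) by decomposing $\mathbb{P}_{\bm{0}}(\tau_D<\tau_{\widetilde A}=\infty)$ directly, so its boundary sets $\widecheck F_1,\widehat F_2$ appear in the reversed roles compared to Lemma~\ref{lemma_upper_bound}; your suggestion of taking $|x|\to\infty$ in (\ref{3.1})--(\ref{3.4}) and using $\mathbb{P}_x(\tau_B<\infty,S_{\tau_B}=\bm{0})\sim G(x,\bm{0})\mathrm{Es}_B(\bm{0})$ achieves the same thing with $\widehat F_1,\widecheck F_2$ instead; by symmetry of $G_A$ the two are equivalent.) Where you genuinely diverge from the paper is in bounding the ratio $G_{\widetilde A}(v_1,v_2)/G_A(v_1,v_2)$. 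The paper reuses Lemma~\ref{lemma_tilde_v1}, which requires controlling $[1-\mathbb{P}_{\mathring v_1}(\tau_D<\tau_{\widetilde A})]^{-1}$; this factor is not automatically bounded, which forces the case split on $R:=\boldsymbol{\mathrm{D}}(\widetilde A,D)$: in the close case $R<16d|D|$ the paper builds an escape path of length $O(|D|+R)$ out of the tube, and in the far case $R\ge 16d|D|$ it invokes Lemmas~\ref{lemma_compare_green} and~\ref{lemma_escape}. You sidestep all of this by observing that in $d\ge 3$ the free Green's function is uniformly bounded, so $G_{\widetilde A}(v_1,v_2)\le G(\bm{0},\bm{0})$ trivially, and combining this with the path-length lower bound $G_A(v_1,v_2)\ge\mathbb{P}_{v_1}(\tau_{v_2}<\tau_A)\ge\gamma(\eta)\ge e^{-C|D|}$ (via $G_A(v_2,v_2)\ge 1$, Lemma~\ref{lemma_connectivity}(2), and Lemma~\ref{lemma_move_along}) already gives the target exponential bound with no case distinction. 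This is a real simplification that exploits transience, which the $d=2$ version of Lemma~\ref{lemma_tilde_v1} could not; it gives a slightly worse bound than the paper in the far-$D$ regime (where the paper achieves a constant), but that is irrelevant to the conclusion. Your checks that $F_j\cap A=D$ (because $D$ is a $*$-cluster) and that $v_1\in\widehat F_1$ lies in $\partial^{\mathrm{o}}_{\infty,*}D$ are correct.

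One local fix: your argument that $v_2\in\widecheck F_2$ also lies in $\partial^{\mathrm{o}}_{\infty,*}D$ asserts that $(A\cup F_2)^c_{\bm{0}}$ ``must sit inside $A^c_\infty$,'' which is not actually forced ($\bm{0}$ can also abut finite components of $A^c$ that then fall into $(A\cup F_2)^c_{\bm{0}}$). What you need, and what is true, is only that $(A\cup F_2)^c_{\bm{0}}\subset D^c_\infty$: since $A\in\mathcal{A}(\mathbb{Z}^d)$ gives $\bm{0}$ a neighbor in $A^c_\infty\subset D^c_\infty$ and $\bm{0}\in D^c$, we have $\bm{0}\in D^c_\infty$; the connected set $(A\cup F_2)^c_{\bm{0}}$ is contained in $D^c$ and contains $\bm{0}$, hence lies entirely in $D^c_\infty$. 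Then $v_2$, having a neighbor there and itself lying in $\partial^{\mathrm{o}}_*D\subset D^c$, lies in $D^c_\infty$, hence in $\partial^{\mathrm{o}}_{\infty,*}D$. With this repair, the proposal is complete.
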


	\begin{lemma}\label{lemma_3d_connect_harmonic}
		For any $d\ge 3$, there exists a constant $C_8(d)>0$ such that for any $*$-connected $A\in \mathcal{A}(\mathbb{Z}^d)$, we have 
		\begin{equation}
			\mathbb{H}_A(\bm{0})\ge e^{-C_8|D|}. 
		\end{equation}
	\end{lemma}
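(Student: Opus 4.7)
The plan is to apply the formula \eqref{2.14}, $\mathbb{H}_A(\bm{0}) = \mathrm{Es}_A(\bm{0})/\mathrm{cap}(A)$, and bound the two factors separately. For the capacity, iterating the subadditivity in Lemma \ref{lemma_2.5} together with translation invariance gives $\mathrm{cap}(A) \le \sum_{x\in A}\mathrm{cap}(\{x\}) = |A|\cdot\mathrm{cap}(\{\bm{0}\}) \le C|A|$, so the task reduces to proving the matching lower bound $\mathrm{Es}_A(\bm{0}) \ge e^{-C|A|}$.

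Write $n = |A|$. The $*$-connectivity of $A$ together with $\bm{0}\in A$ forces $A \subset \Lambda(n-1)$, and the hypothesis $A\in \mathcal{A}(\mathbb{Z}^d)$ ensures $N^A_\infty(\bm{0}) \ne \emptyset$, so I can pick $v\in N^A_\infty(\bm{0})$. My strategy is to construct a self-avoiding path $\eta$ in $A^c$ with $\eta(0)=v$, $\boldsymbol{\mathrm{L}}(\eta)\le C_d n$, and endpoint $w\in [\mathbf{B}(2dn)]^c$. Given such $\eta$, Lemma \ref{lemma_move_along} yields $\mathbb{P}_v\bigl(\tau_w < \tau_{\partial^{\mathrm{o}}\boldsymbol{\mathrm{R}}(\eta)}\bigr) \ge (d+\sqrt{d^2-1})^{-\boldsymbol{\mathrm{L}}(\eta)-1} \ge e^{-C'n}$; on that event the walk stays in $\boldsymbol{\mathrm{R}}(\eta)\subset A^c$ and so reaches $w$ without ever touching $A$. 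From $w$, since $A\subset \mathbf{B}(dn)$, applying the strong Markov property at the first entry into $\partial^{\mathrm{i}}\mathbf{B}(2dn)$ together with Lemma \ref{lemma_escape} gives $\mathbb{P}_w(\tau_A=\infty) \ge c_5$. Combining with the factor $\frac{1}{2d}$ for the first step $\bm{0}\to v$ yields $\mathrm{Es}_A(\bm{0})\ge e^{-C''n}$.

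The heart of the proof is the construction of $\eta$, and this is where the $*$-connectivity of $A$ is used crucially. By Lemma \ref{lemma_connectivity}(2), the set $\partial^{\mathrm{o}}_{\infty,*}A$ is ordinarily connected, lies in $A^c$, and has cardinality at most $3^d n$. Let $v_{\max}\in A$ achieve $m:=\max_{x\in A}x_1$; then $w^*:=v_{\max}+e_1$ lies in $\partial^{\mathrm{o}}_\infty A \subset \partial^{\mathrm{o}}_{\infty,*}A$, and within the connected graph $\partial^{\mathrm{o}}_{\infty,*}A$ there is an ordinary path from $v$ to $w^*$ of length at most $3^d n-1$. I concatenate this with the straight segment in the $+e_1$ direction starting at $w^*$, of length $2dn$, whose vertices all satisfy $x_1>m$ and therefore lie in $A^c$. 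After removing loops to enforce self-avoidance, the resulting $\eta$ meets all requirements.

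The main obstacle is ruling out an escape path of superlinear length: a naive detour through $A^c\cap \Lambda(n)$ could cost length of order $n^d$ and would only yield a stretched-exponential bound. The virtue of $\partial^{\mathrm{o}}_{\infty,*}A$ is that it forms a one-thick shell around $A$, so that its ordinary-graph connectivity (which is afforded precisely by the $*$-connectivity of $A$ via Lemma \ref{lemma_connectivity}(2)) supplies the linear-length traversal that is exactly what is needed to feed into Lemma \ref{lemma_move_along}.
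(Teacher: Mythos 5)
Your proof is correct and follows essentially the same route as the paper: decompose via \eqref{2.14}, bound $\mathrm{cap}(A)\le |A|\,\mathrm{cap}(\{\bm{0}\})$ by subadditivity, and obtain the escape-probability lower bound by navigating the connected set $\partial^{\mathrm{o}}_{\infty,*}A$ (connected by Lemma \ref{lemma_connectivity}(2), of size $\le 3^d|A|$) out to $\partial^{\mathrm{i}}\mathbf{B}(2d|A|)$ and then invoking Lemma \ref{lemma_escape}. The only cosmetic difference is that you invoke the exact formula of Lemma \ref{lemma_move_along} for the along-the-path probability where the paper cites ``the same argument as proving (\ref{new_5.9})'', i.e.\ the crude $(2d)^{-\mathrm{length}}$ bound; both yield the needed $e^{-C|A|}$.
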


	With the help of these two lemmas, proving Theorem \ref{thm2} is straightforward:
	\begin{proof}[Proof of Theorem \ref{thm2} assuming Lemmas \ref{lemma_exponential_price} and \ref{lemma_3d_connect_harmonic}] 
		Note that a high dimensional version of the tube example presented in Figure \ref{fig:tube} implies that $\mathcal{M}_n(\mathbb{Z}^d)\le e^{-cn}$ for some constant $c(d)>0$.

		For the lower bound in (\ref{1.6}), we enumerate the $*$-clusters of $A$ by $\{D_i\}_{i=0}^{k}$ ($k\in \mathbb{N}^0$), where $D_0$ is the one containing $\bm{0}$. Let $A_l:= A\setminus \cup_{i=1}^{l}D_i$ for $0\le l\le k$ (especially, $A_0=A$ and $A_k=D_0$). Note that $A_l\in \mathcal{A}(\mathbb{Z}^d)$ for all $0\le l\le k$ since $A\in \mathcal{A}(\mathbb{Z}^d)$. Then it is easy to see that 
		\begin{equation*}
			\mathbb{H}_A(\bm{0}) =	\mathbb{H}_{D_0}(\bm{0}) \cdot \prod_{l=1}^{k} \frac{\mathbb{H}_{A_{l-1}}(\bm{0})}{\mathbb{H}_{A_{l}}(\bm{0})}.
		\end{equation*}
		Thus, by Lemmas \ref{lemma_exponential_price} and \ref{lemma_3d_connect_harmonic}, we get 
		\begin{equation*}
			\mathbb{H}_A(\bm{0}) \ge e^{-C_8|D_0|}\cdot e^{-C_7\sum_{l=1}^k|D_l|}. 
		\end{equation*}
		Combined with the fact that $|A|= \sum_{l=0}^k|D_i|$, this concludes the desired lower bound with $C_2= \max\{C_7,C_8\}$. 
	\end{proof}

	We first establish Lemma \ref{lemma_exponential_price} as follows.

	\begin{proof}[Proof of Lemma \ref{lemma_exponential_price}]
		
	Recall that $A_{(\bm{0})}$ is the $*$-cluster of $A$ that contains $\bm{0}$. If $D$ is contained in some finite connected component of $[A_{(\bm{0})}]^c$, then (\ref{5.1}) holds since $\mathbb{H}_{A\setminus D}(\bm{0})=\mathbb{H}_A(\bm{0})$. To avoid this trivial case, we assume that $D\subset [A_{(\bm{0})}]_{\infty}^c$.

	Let $F_1\subset F_2\subset \mathbb{Z}^d$ be subsets with $F_1\cap A=F_2\cap A=D$, which will be determined later. Denote $\widetilde{A}:= A\setminus D$. Recall that for any $j\in \{1,2\}$, $\widehat{F}_j:=[\partial^{\mathrm{i}}_{\infty} (A\cup F_j)]\setminus \widetilde{A}$ and $\widecheck{F}_j:=[\partial^{\mathrm{i}}_{\bm{0}} (A\cup F_j)]\setminus \widetilde{A}$. Also recall that $\mathring{v}_1$ maximizes $G_{\widetilde{A}}(\cdot,v_2)$ in $\widehat{F}_1\cup \widecheck{F}_1$. By adapting the decompositions in Section \ref{subsection_decomposition}, we get the analogues of (\ref{3.1}) and (\ref{3.2}) as follows: 
		\begin{equation*}\label{5.4}
		\begin{split}
			&\mathbb{P}_{\bm{0}}\left( \tau_D<\tau_{\widetilde{A}}=\infty \right)=\sum_{v_1\in \widecheck{F}_1,v_2\in\widehat{F}_2} \mathbb{P}_{\bm{0}}\big(  \tau_{A\cup F_1}=\tau_{v_1}<\infty \big)\\
			&\hskip 2.5cm \cdot \mathbb{P}_{v_2}\left(  \tau_{A\cup F_2}^+=\infty \right)
			 \Big[ \sum_{w\in D} \mathbb{P}_{v_1} \left( \tau_A=\tau_w<\infty \right)   G_{\widetilde{A}}(w,v_2)\Big],
		\end{split}	
	\end{equation*}
		\begin{equation*}\label{5.5}
		\begin{split}
			\mathbb{P}_{\bm{0}}\left( \tau_{F_1}<\tau_{A}^+=\infty\right)
			 =\sum_{v_1\in \widecheck{F}_1,v_2\in \widehat{F}_2}  \mathbb{P}_{\bm{0}}\left( \tau_{A\cup F_1}=\tau_{v_1}<\infty \right) \mathbb{P}_{v_2}\left(  \tau_{A\cup F_2}^+= \infty \right)
			 G_{A}(v_1,v_2). 
		\end{split}
	\end{equation*}
    Based on these two formulas, for the same reason as proving Lemma \ref{lemma_upper_bound}, we have 
	\begin{equation}\label{5.6}
		\frac{\mathrm{Es}_{\widetilde{A}}(\bm{0})}{\mathrm{Es}_{A}(\bm{0})} \le   \max_{v_1\in \widecheck{F}_1,v_2\in \widehat{F}_2} \frac{G_{\widetilde{A}}(v_1,v_2)}{G_{A}(v_1,v_2)}. 
	\end{equation}

	Let $R:= \boldsymbol{\mathrm{D}}(\widetilde{A},D)$. Next, we take $F_1$ and $F_2$ separately in different cases (i.e. when $R< 16d|D|$ and when $R\ge  16d|D|$), and then prove that there exists $C(d)>0$ such that
		\begin{equation}\label{6.4}
			G_{\widetilde{A}}(v_1,v_2) \le e^{C\cdot |D|} G_{A}(v_1,v_2),\ \forall v_1\in \widecheck{F}_1\ \text{and}\ v_2\in \widehat{F}_2.  
		\end{equation}

		\textbf{When $R< 16d|D|$}: In this case, we set $F_1=F_2=D\cup  \partial^{\mathrm{o}}_\infty D$. It follows that $\widehat{F}_j,\widecheck{F}_j\subset \partial^{\mathrm{o}}_\infty D$ for $j\in \{1,2\}$. Hence, $v_1,\mathring{v}_1,v_2\in \partial^{\mathrm{o}}_\infty D$. Since $v_1$ and $v_2$ are connected in $\partial_{\infty,*}^{\mathrm{o}} D$ (by Lemma \ref{lemma_connectivity}), there exists a path $\eta^{1,2}$ from $v_1$ to $v_2$ such that $\boldsymbol{\mathrm{R}}(\eta)\cap A= \emptyset $ and $\boldsymbol{\mathrm{L}}(\eta)\le |\partial_{\infty,*}^{\mathrm{o}} D|\le 3^d|D|$. As a result, we get 
		\begin{equation}\label{new_5.8}
			\mathbb{P}_{v_1}\left(\tau_{v_2}<\tau_{A}\right) \ge  (2d)^{-3^d|D|}. 
		\end{equation}
		By the definition of $R$, there exists a path $\eta'$ from some vertex $v'\in \partial^{\mathrm{o}}_\infty D$ to $\widetilde{A}$ such that $\boldsymbol{\mathrm{L}}(\eta')= R-1$ and $\boldsymbol{\mathrm{R}}(\eta')\cap D=\emptyset$, which implies that $\mathrm{P}_{v'}\left(\tau_{\widetilde{A}}<\tau_D \right)\ge (2d)^{-R+1}$. Moreover, for the same reason as proving (\ref{new_5.8}), we have $\mathbb{P}_{\mathring{v}_1}\left(\tau_{v'}<\tau_{A}\right) \ge  (2d)^{-3^d|D|}$. Thus, by strong Markov property, we get 
		\begin{equation}\label{new_5.9}
			\mathbb{P}_{\mathring{v}_1}\left(\tau_{\widetilde{A}} <\tau_D\right) \ge \mathbb{P}_{\mathring{v}_1}\left(\tau_{v'}<\tau_{A} \right) \mathrm{P}_{v'}\left(\tau_{\widetilde{A}}<\tau_D \right) \ge (2d)^{-3^d|D|-R+1}. 
		\end{equation}
		Combining (\ref{tilde_v1_e2}), (\ref{new_5.8}), (\ref{new_5.9}) and $R< 16d|D|$, we obtain (\ref{6.4}).

	\textbf{When $R\ge 16d|D|$}: We arbitrarily take $x_{\ddagger}\in D$. Since $R\ge 16d|D|$ and $D$ is $*$-connected, one has $D\subset \mathbf{B}_{x_\ddagger}(\tfrac{R}{16})$ and $\mathbf{B}_{x_\ddagger}(\frac{R}{2})\cap \widetilde{A}=\emptyset$. We set $F_1=\mathbf{B}_{x_\ddagger}(\tfrac{R}{8})$ and $F_2=B_{x_\ddagger}(\tfrac{R}{4})$. Note that $v_1,\mathring{v}_1 \in \partial^{\mathrm{i}}\mathbf{B}_{x_\ddagger}(\tfrac{R}{8})$ and $v_2\in \partial^{\mathrm{i}}\mathbf{B}_{x_\ddagger}(\tfrac{R}{4})$. By Lemma \ref{lemma_escape}, 
	\begin{equation}\label{5.8}
		1-\mathbb{P}_{\mathring{v}_1}\left(\tau_D<\tau_{\widetilde{A}} \right) \ge  \mathbb{P}_{\mathring{v}_1}\Big(\tau_{\mathbf{B}_{x_\ddagger}(\tfrac{R}{16})}=\infty \Big)\ge c_5.
	\end{equation}
	Moreover, by Lemma \ref{lemma_compare_green}, we also have 
	\begin{equation}\label{5.9}
	G_A(v_1,v_2) \ge c_3(\tfrac{1}{16},\tfrac{1}{8},\tfrac{1}{4},d)\cdot	G_A(\mathring{v}_1,v_2).  
	\end{equation}
	Combining (\ref{tilde_v1_e1}), (\ref{5.8}) and (\ref{5.9}), we conclude (\ref{6.4}). By (\ref{5.6}) and (\ref{6.4}), we get
	\begin{equation}\label{5.10}
		\frac{\mathrm{Es}_{\widetilde{A}}(\bm{0})}{\mathrm{Es}_A(\bm{0})}\le e^{C\cdot |D|} .
	\end{equation}

	By Lemma \ref{lemma_2.5}, one has $\mathrm{cap}(A)\le \mathrm{cap}(\{\bm{0}\})\cdot |D|+\mathrm{cap}(\widetilde{A})$ and $\mathrm{cap}(\widetilde{A})\ge \mathrm{cap}(\{\bm{0}\})$, which implies that $\frac{\mathrm{cap}(A)}{\mathrm{cap}(\widetilde{A})}\le 1+ |D|$. Combined with (\ref{5.10}) and $\frac{\mathbb{H}_{\widetilde{A}}(\bm{0})}{\mathbb{H}_A(\bm{0})}=\frac{\mathrm{Es}_{\widetilde{A}}(\bm{0})}{\mathrm{Es}_A(\bm{0})}\cdot \frac{\mathrm{cap}(A)}{\mathrm{cap}(\widetilde{A})}$ (by (\ref{2.14})), we complete the proof of Lemma \ref{lemma_exponential_price}. 
	\end{proof}

	Subsequently, we demonstrate Lemma \ref{lemma_3d_connect_harmonic}. 
	
	\begin{proof}[Proof of Lemma \ref{lemma_3d_connect_harmonic}]
		 
		 Since $A$ is $*$-connected and contains $\bm{0}$, we know that $A\subset \mathbf{B}(d|A|)$. With the same argument as proving (\ref{new_5.9}), we have 
		 \begin{equation}\label{new_5.13}
		 	\mathbb{P}_{\bm{0}}\left(\tau_{\partial^{\mathrm{i}}\mathbf{B}(2d|A|)}<\tau_{A}   \right) \ge e^{-C|A|}.
			\end{equation}
		 Moreover, by Lemma \ref{lemma_escape}, we have: for any $w\in \partial^{\mathrm{i}}\mathbf{B}(2d|A|)$, 
		 \begin{equation}\label{new_5.14}
		 	\mathbb{P}_w\left(\tau_{A} = \infty \right) \ge c_5.
		 \end{equation}
		 By strong Markov property, (\ref{new_5.13}) and (\ref{new_5.14}), we obtain 
		 \begin{equation}\label{new_5.15}
		 	\mathrm{Es}_A(\bm{0})\ge  \mathbb{P}_{\bm{0}}\left(\tau_{\partial^{\mathrm{i}}B(2d|A|)}<\tau_{A}   \right)\cdot \min_{w\in \partial^{\mathrm{i}}B(2d|A|)} \mathbb{P}_w\left(\tau_{A} = \infty \right) \ge c_5 e^{-C|A|}.
		 \end{equation}
		 By Lemma \ref{lemma_2.5}, one has $\mathrm{cap}(A)\le \mathrm{cap}(\{\bm{0}\})|A|$. Combined with (\ref{new_5.15}), it completes the proof of Lemma \ref{lemma_3d_connect_harmonic}. 
	\end{proof}


		\section*{Acknowledgments}
		
			The collaboration on this paper has much to thank the workshop that took place at the Technion, funded by DFG - 508398298. The authors would like to thank Noam Berger, Yifan Gao and Xinyi Li for fruitful discussions. YZ was supported by NSFC - 11901012  and National Key R\&D Program of China -
2020YFA0712902.

		\bibliographystyle{plain}
		\bibliography{ref}
		
	\end{document}